\theoremstyle{plain}
\newtheorem{thm}{Theorem}[subsection]
\newtheorem{lem}[thm]{Lemma}
\newtheorem{prop}[thm]{Proposition}
\newtheorem{cor}[thm]{Corollary}
\theoremstyle{definition}
\newtheorem{remark}[thm]{Remark}
\def\k{\ensuremath{\bold{k}}}
\def\J{\mathcal{J}}
\def\D{\mathcal{D}}
\def\B{\mathcal{B}}
\def\O{\mathcal{O}}
\def\q{\mathfrak{q}}
\newcommand*{\Hom}{\ensuremath{\text{\upshape Hom}}}
\newcommand*{\gr}{\ensuremath{\text{\upshape gr}}}
\newcommand*{\grH}{\ensuremath{\text{\upshape gr}}\, H}
\newcommand*{\Ker}{\ensuremath{\text{\upshape Ker}}}
\newcommand*{\Img}{\ensuremath{\text{\upshape Im}}}
\newcommand*{\ad}{\ensuremath{\text{\upshape ad}}}
\newcommand*{\Prim}{\ensuremath{\text{\upshape P}}}
\newcommand*{\id}{\ensuremath{\text{\upshape id}}}
\newcommand*{\YD}{\,^{G}_{G} \mathcal{YD}}
\newcommand*{\BV}{\mathcal{B}(V)}
\newcommand*{\HL}{\ensuremath{\text{\upshape H}}}
\newcommand*{\HH}{\ensuremath{\text{\upshape HH}}}
\def\dim{\operatorname{dim}}
\newcommand*{\Bock}{\boldsymbol{\omega}}
\DeclarePairedDelimiterX\set[1]\lbrace\rbrace{#1}
\begin{document}
\numberwithin{equation}{thm}
\thispagestyle{empty}

\title[Pointed $p^3$-dimensional Hopf algebras in positive characteristic]{Pointed $p^3$-dimensional Hopf algebras \\ in positive characteristic}

\author{Van C. Nguyen}
\author{Xingting Wang}

\address{Department of Mathematics\\
Northeastern University\\
Boston, MA 02115}
\email{v.nguyen@northeastern.edu}

\address{Department of Mathematics\\
Temple University\\
Philadelphia, PA,  19122-6094}
\email{xingting@temple.edu}

%\date{\today}

\keywords{braided Hopf algebras, Yetter-Drinfeld modules, positive characteristic, pointed Hopf algebras, Nichols algebras}

\subjclass[2010]{16T05, 17B60}

% ABSTRACT
\begin{abstract}
We classify pointed $p^3$-dimensional Hopf algebras $H$ over any algebraically closed field $\k$ of prime characteristic $p>0$. In particular, we focus on the cases when the group $G(H)$ of group-like elements is of order $p$ or $p^2$, that is, when $H$ is pointed but is not connected nor a group algebra. This work provides many new examples of (parametrized) non-commutative and non-cocommutative finite-dimensional Hopf algebras in positive characteristic. 
\end{abstract}

\maketitle

%%%%%%%%%%%%%%%%%%%%%%%%%%%%%%%%

\section*{Introduction}

The classification of pointed $p^3$-dimensional Hopf algebras $H$ in characteristic zero (e.g.~in $\mathbb{C}$) was independently presented by Andruskiewitsch and Schneider \cite{NA98, NA02, NA10}, by Caenepeel and D\u{a}sc\u{a}lescu \cite{CaenepeelDascalescu}, and by Stefan and van Oystaeyen \cite{SteVan} using different methods. In this paper, we provide the classification in characteristic $p>0$. This work, together with the classification of connected $p^3$-dimensional Hopf algebras \cite{NWW1, NWW2} and that of groups of order $p^3$, provides isomorphism classes of pointed Hopf algebras of dimension $p^3$ over an algebraically closed field of characteristic $p$. 

Moreover, we remark that the classification of $p^n$-dimensional pointed Hopf algebras over an algebraically closed field of prime characteristic $q>0$, where $p$ and $q$ are coprime, yields similar isomorphism classes as in the case of characteristic zero; since the same technique should work in both cases. Hence, our work in this paper in characteristic $p>0$, combined with previous classification results by other authors, will compete the classification picture for pointed Hopf algebras of dimensions $p, p^2$, and $p^3$ over any algebraically closed field of \textit{arbitrary} characteristic. 

We present the following diagram outlining our classifying idea in this paper. We first break down to smaller cases by the order of the group $G(H)$ of grouplike elements. We observe that in the case when $|G(H)|=p$, it occurs that the braided Hopf algebra $R$ in the associated graded Hopf algebra $\grH \cong R \# \k G(H)$ of $H$, cf.~Section~\ref{sec:prelim}, may not be primitively generated, and we could have Yetter-Drinfeld modules in $\YD$ of either diagonal type or Joran type; hence, further cases occur where the structures arise. Interested readers may refer to corresponding section(s) for detailed classification results.  

At last, we emphasize that the principle proposed by Andruskiewitsch and Schneider in \cite{NA98} to study pointed Hopf algebras in characteristic zero is generally applicable in positive characteristic. But the difficulty arises when the characteristic of the base field $\k$ divides the dimension of the pointed Hopf algebra $H$, because in this situation: (1) the braided Hopf algebras $R$ are in general not primitively generated (that is, they are not Nichols algebras) even when $G(H)$ is abelian, and (2) the liftings from $\grH$ to $H$ are computationally challenging in characteristic $p$. \\

\FloatBarrier
{\Small
\begin{tikzpicture}
\tikzstyle{iellipse}=[draw=black,shape=ellipse,very thick,fill=green!18!white];  %The fill color "green!20!white" means 20% green and 80% white mixed together.
\tikzstyle{ibox}=[draw=black,shape=rectangle,very thick,fill=red!18];
\node[ibox, align=center, below] (n1) at (-2,0)  {Classification outline in char. $p>0$, \\ for pointed $p^3$-dim Hopf algebra $H$}; %Multiline text can be included inside a node. A new line is indicated by double backslash "\\", but additionally you have to specify the alignment using the node option "align=".
\node[ibox, align=center, below] (n2) at (-6.7,-1.8)  {$\bullet |G(H)|=1$, $H$ is connected, \\$p>2$: \texttt{\cite{NWW1, NWW2}}, \\ $\bullet |G(H)|=p^3$, $H=\k G(H)$.}; 
\node[ibox, align=center, below] (n3) at (-2,-1.8)  {$|G(H)|=p$};
\node[ibox, align=center, below] (n4) at (2.8,-1.8)  {$|G(H)|=p^2$, \\ Cases (D1-a,b,c), (D2-a,b)--Sec.\ref{liftingD}};
\node[iellipse, align=center, below] (n5) at (-4.7,-3.5)  {$R$ primitively generated};
\node[iellipse, align=center, below] (n6) at (2,-3.5)  {$R$ non-primitively generated, \\ Cases (Ca)-(Cb)--Sec.\ref{liftingC}};
\node[iellipse, align=center, below] (n7) at (-7.5,-5.3)  {$R_1$ Jordan type, \\ Case (B)--Sec.\ref{liftingB}};
\node[iellipse, align=center, below] (n8) at (-0.8,-5.3)  {$R_1$ diagonal type, \\ Cases (A1-a,b), (A2), (A3)--Sec.\ref{liftingA}};

\draw[->, very thick, >=stealth] (n1) -- (n2);
\draw[->, very thick, >=stealth] (n1) -- (n3);
\draw[->, very thick, >=stealth] (n1) -- (n4);
\draw[->, very thick, >=stealth] (n3) -- (n5);
\draw[->, very thick, >=stealth] (n3) -- (n6);
\draw[->, very thick, >=stealth] (n5) -- (n7);
\draw[->, very thick, >=stealth] (n5) -- (n8);
\end{tikzpicture}
}
%\vspace{-0.3em}

We obtain the following classes of pointed $p^3$-dimensional Hopf algebras in characteristic $p>0$: \\

% State the structures of grH for EACH case A,B,C,D and then state each classes of H 
%Case A1
{\small 
\begin{tikzpicture}
\tikzstyle{ibox}=[draw=black,shape=rectangle,very thick,fill=white];
\node[ibox, align=center, below] (n1) at (0,0)  {\textbf{Case A1.} Liftings from $\grH=\k \langle a, b, g \rangle /(g^p = 1, a^p=b^p=0, ab=ba, \,ga = ag, \,gb=bg)$, \\ with $\Delta(g)= g \otimes g, \, \Delta(a)=a \otimes 1+g \otimes a, \, \Delta(b)=b \otimes 1+g^u\otimes b$, \\ $\varepsilon(g)=1, \, \varepsilon(a)=\varepsilon(b)=0, \, S(g)=g^{-1}, \, S(a) = -ag^{-1}, \, S(b)= -bg^{-u}$, for $0 \le u\le p-1$.};
\end{tikzpicture}}
%\begin{tcolorbox}[arc=1mm, height=1.65cm,valign=center]
%\end{tcolorbox}

\vspace{0.5em}
When $u=0$, there are $1$ infinite parametric family and $10$ finite classes of $H$ having structured lifted from case (A1). When $u \neq 0$, there are $2(p-1)$ infinite parametric families and $6(p-1)$ finite classes of $H$. \\

%Case A2
%{\small \begin{tcolorbox}[arc=1mm, height=1.65cm,valign=center]
%\textbf{Case A2.} Liftings from $\grH=\k \langle a, b, g \rangle /(g^p = 1, \, a^p=b^p=0, \, ab=ba, \,ga = ag, \,gb=bg)$, with $\Delta(g)= g \otimes g, \, \Delta(a)= a \otimes 1+1 \otimes a, \, \Delta(b)=b \otimes 1+1 \otimes b, \, \varepsilon(g)=1, \, \varepsilon(a)=\varepsilon(b)=0, \, S(g)=g^{-1}, \, S(a) = -a, \, S(b)= -b$.
%\end{tcolorbox}}
{\small 
\begin{tikzpicture}
\tikzstyle{ibox}=[draw=black,shape=rectangle,very thick,fill=white];
\node[ibox, align=center, below] (n1) at (0,0)  {\textbf{Case A2.} Liftings from $\grH=\k \langle a, b, g \rangle /(g^p = 1, \, a^p=b^p=0, \, ab=ba, \,ga = ag, \,gb=bg)$, \\ with $\Delta(g)= g \otimes g, \, \Delta(a)= a \otimes 1+1 \otimes a, \, \Delta(b)=b \otimes 1+1 \otimes b$, \\ $\varepsilon(g)=1, \, \varepsilon(a)=\varepsilon(b)=0, \, S(g)=g^{-1}, \, S(a) = -a, \, S(b)= -b$.};
\end{tikzpicture}}

\vspace{0.5em}
There are $5$ finite classes of $H$ having structured lifted from case (A2). \\

%Case A3
%{\small \begin{tcolorbox}[arc=1mm, height=1.65cm,valign=center]
%\textbf{Case A3.} $(p=2)$ Liftings from $\grH=\k \langle a, b, g \rangle /(g^p = 1, \, a^p=b^p=0, \, ab=ba, \,ga = bg, \,gb=ag)$, with $\Delta(g)= g \otimes g, \, \Delta(a)= a \otimes 1+1 \otimes a, \, \Delta(b)=b \otimes 1+1 \otimes b, \, \varepsilon(g)=1, \, \varepsilon(a)=\varepsilon(b)=0, \, S(g)=g^{-1}, \, S(a) = -a, \, S(b)= -b$.
%\end{tcolorbox}}
{\small 
\begin{tikzpicture}
\tikzstyle{ibox}=[draw=black,shape=rectangle,very thick,fill=white];
\node[ibox, align=center, below] (n1) at (0,0)  {\textbf{Case A3.} $(p=2)$ Liftings from $\grH=\k \langle a, b, g \rangle /(g^p = 1, \, a^p=b^p=0, \, ab=ba$, \\ $ga = bg, \,gb=ag)$, with $\Delta(g)= g \otimes g, \, \Delta(a)= a \otimes 1+1 \otimes a, \, \Delta(b)=b \otimes 1+1 \otimes b$, \\ $\varepsilon(g)=1, \, \varepsilon(a)=\varepsilon(b)=0, \, S(g)=g^{-1}, \, S(a) = -a, \, S(b)= -b$.};
\end{tikzpicture}}

\vspace{0.5em}
There are $5$ finite classes of $H$ having structured lifted from case (A3). \\

% for cases B and Cb, write the structure for grH and state the lifting is not clear
%Case B
%{\small \begin{tcolorbox}[arc=1mm, height=1.75cm,valign=center]
%\textbf{Case B.} $(p > 2)$ Liftings from $\grH=\k \langle a, b, g \rangle /(g^p = 1,\, a^p=b^p=0,\, ab-ba=\frac{1}{2}a^2, \,ga=ag, \,gb=(a+b)g)$, with $\Delta(g)= g \otimes g, \, \Delta(a)=a \otimes 1+g\otimes a, \, \Delta(b)=b \otimes 1+g\otimes b, \varepsilon(g)=1, \, \varepsilon(a)=\varepsilon(b)=0, \, S(g)=g^{-1}, \, S(a) = -ag^{-1}, \, S(b)=(a-b)g^{-1}$.
%\end{tcolorbox}}
{\small 
\begin{tikzpicture}
\tikzstyle{ibox}=[draw=black,shape=rectangle,very thick,fill=white];
\node[ibox, align=center, below] (n1) at (0,0)  {\textbf{Case B.} $(p > 2)$ Liftings from $\grH=\k \langle a, b, g \rangle /(g^p = 1,\, a^p=b^p=0,\, ab-ba=\frac{1}{2}a^2$, \\ $ga=ag, \, gb=(a+b)g)$, with $\Delta(g)= g \otimes g, \, \Delta(a)=a \otimes 1+g\otimes a, \, \Delta(b)=b \otimes 1+g\otimes b$, \\ $\varepsilon(g)=1, \, \varepsilon(a)=\varepsilon(b)=0, \, S(g)=g^{-1}, \, S(a) = -ag^{-1}, \, S(b)=(a-b)g^{-1}$.};
\end{tikzpicture}}

\vspace{0.5em}
Due to complicated computations in characteristic $p$, the lifting in case (B) is not clear in general; however, we show in \Cref{liftingB} the case when $p=3$ for illustration and make a conjecture for the lifting for $p>3$. \\

%Case C
%{\small \begin{tcolorbox}[arc=1mm, height=2.35cm,valign=center]
%\textbf{Case C.} Liftings from $\grH=\k \langle a, b, g \rangle /(g^p = 1,\, a^p=b^p=0,\, ab=ba, \,ga = ag, \,gb=bg)$, with $\Delta(g)= g \otimes g,\, \Delta(a)=a \otimes 1+g^\epsilon \otimes a,\, \Delta(b)=b \otimes 1 +1 \otimes b + \sum_{1 \le i \le p-1} \frac{(p-1)!}{i!\,(p-i)!}\, (a^i \, g^{\epsilon(p-i)} \otimes a^{p-i} ),\, \varepsilon(g)=1, \, \varepsilon(a)=\varepsilon(b)=0, \, S(g)=g^{-1}, \, S(a) = -ag^{-\epsilon}, \, S(b)= -b$, for $\epsilon \in \{0,1\}$.
%\end{tcolorbox}}
{\small 
\begin{tikzpicture}
\tikzstyle{ibox}=[draw=black,shape=rectangle,very thick,fill=white];
\node[ibox, align=center, below] (n1) at (0,0)  {\textbf{Case C.} Liftings from $\grH=\k \langle a, b, g \rangle /(g^p = 1,\, a^p=b^p=0,\, ab=ba, \, ga = ag$, \\ $gb=bg)$, with $\Delta(g)= g \otimes g,\, \Delta(a)=a \otimes 1+g^\epsilon \otimes a$, \\ $\Delta(b)=b \otimes 1 +1 \otimes b + \sum_{1 \le i \le p-1} \frac{(p-1)!}{i!\,(p-i)!}\, (a^i \, g^{\epsilon(p-i)} \otimes a^{p-i} )$, \\ $\varepsilon(g)=1, \, \varepsilon(a)=\varepsilon(b)=0, \, S(g)=g^{-1}, \, S(a) = -ag^{-\epsilon}, \, S(b)= -b$, for $\epsilon \in \{0,1\}$.};
\end{tikzpicture}}

\vspace{0.5em}
When $\epsilon=0$, we get $1$ infinite parametric family and $2$ finite classes of $H$. When $\epsilon=1$, due to complicated computations in characteristic $p$, the lifting in this case (Cb) is not clear in general. We show in \Cref{liftingC} the cases (C) when $p=2$, and when $p>2$ with additional assumption $gx=xg$, where $x$ is the lifting of $a$ from $\gr H$ to $H$. \\

%Case D1
%{\small \begin{tcolorbox}[arc=1mm, height=1.65cm,valign=center]
%\textbf{Case D1.} Liftings from $\grH=\k \langle a, g \rangle /(g^{p^2} = 1, \,a^p=0, \, ga = ag)$, with $\Delta(g)= g \otimes g, \, \Delta(a)=a \otimes 1+g^{\epsilon} \otimes a, \,  \varepsilon(g)=1, \, \varepsilon(a)=0, \, S(g)=g^{-1}, \, S(a) = -ag^{-\epsilon}$, for $\epsilon \in \{0,1,p\}$.
%\end{tcolorbox}}
{\small 
\begin{tikzpicture}
\tikzstyle{ibox}=[draw=black,shape=rectangle,very thick,fill=white];
\node[ibox, align=center, below] (n1) at (0,0)  {\textbf{Case D1.} Liftings from $\grH=\k \langle a, g \rangle /(g^{p^2} = 1, \,a^p=0, \, ga = ag)$, \\ with $\Delta(g)= g \otimes g, \, \Delta(a)=a \otimes 1+g^{\epsilon} \otimes a$, \\ $\varepsilon(g)=1, \, \varepsilon(a)=0, \, S(g)=g^{-1}, \, S(a) = -ag^{-\epsilon}$, for $\epsilon \in \{0,1,p\}$.};
\end{tikzpicture}}

\vspace{0.5em}
When $\epsilon=0$, we get $2$ finite classes of $H$. When $\epsilon=1$, there are $1$ infinite parametric family and $2$ finite classes of $H$. When $\epsilon=p$, there are $2$ finite classes of $H$. \\

%Case D2
%{\small \begin{tcolorbox}[arc=1mm, height=1.65cm,valign=center]
%\textbf{Case D2.} Liftings from $\grH=\k \langle a, g_1, g_2 \rangle /(g_1^{p} = g_2^{p}=1, \, a^p=0, \,g_ia = ag_i)$, with $\Delta(g_i)= g_i \otimes g_i, \, \Delta(a)=a \otimes 1+g_1^\epsilon \otimes a, \, \varepsilon(g_i)=1, \, \varepsilon(a)=0, \, S(g_i)=g_i^{-1}, \, S(a) = -ag_1^{-\epsilon}$, for $\epsilon \in \{0,1\}$.
%\end{tcolorbox}}
{\small 
\begin{tikzpicture}
\tikzstyle{ibox}=[draw=black,shape=rectangle,very thick,fill=white];
\node[ibox, align=center, below] (n1) at (0,0)  {\textbf{Case D2.} Liftings from $\grH=\k \langle a, g_1, g_2 \rangle /(g_1^{p} = g_2^{p}=1, \, a^p=0, \,g_ia = ag_i)$, \\ with $\Delta(g_i)= g_i \otimes g_i, \, \Delta(a)=a \otimes 1+g_1^\epsilon \otimes a$, \\ $\varepsilon(g_i)=1, \, \varepsilon(a)=0, \, S(g_i)=g_i^{-1}, \, S(a) = -ag_1^{-\epsilon}$, for $\epsilon \in \{0,1\}$.};
\end{tikzpicture}}

\vspace{0.5em}
When $\epsilon=0$, we have $2$ finite classes of $H$. When $\epsilon=1$, there are $1$ infinite parametric family and $2$ finite classes of $H$. \\

The paper is organized as follows. In Section~\ref{sec:prelim}, we recall some basic notations and properties of pointed Hopf algebras, Yetter-Drinfeld modules and Nichols algebras. In Section~\ref{sec:BVrank2}, we study rank two Nichols algebras over $\k G$ where $\k$ is of arbitrary characteristic and $G$ is a finite group. In Section~\ref{sec:R}, we study the bosonizations of these braided Hopf algebras under the assumption that $\k$ is of characteristic $p$ and $G$ is a $p$-group of order $\le p^2$. In Section~\ref{sec:pointed p3}, we apply the Lifting Method to these bosonizations to obtain our classification results. Finally, we point out that rank two Nichols algebras of diagonal type over fields of positive characteristic were studied in \cite{WHeck}; and the Nichols algebras in Case (B) were first discussed in \cite{CLW} as examples of Nichols algebras in positive characteristic.  

%%%%%%%%%%%%%%%%%%%%%%%%%%%%%%%%

\section{Preliminary}
\label{sec:prelim}

We first recall some general results over a base field $\k$ of arbitrary characteristic. The unadorned tensor $\otimes$ means $\otimes_\k$ unless specified otherwise. Let $H:=H(m,u,\Delta,\varepsilon,S)$ be any finite-dimensional pointed Hopf algebra with its \textit{coradical} $H_0$ (the sum of all simple subcoalgebras of $H$) a group algebra $H_0 = \k G$; that is, $H_0$ is a Hopf subalgebra of $H$ generated by the grouplike elements $G:=G(H)=\{ g \in H \,|\, \Delta(g)=g \otimes g\}$. Note that the dimension of $H_0$ (and hence the order of group $G$) must divide the dimension of $H$ by Nichols-Zoeller's freeness theorem \cite{NZ}.

Let 
$$H_0 \subseteq H_1 \subseteq H_2  \subseteq \cdots \subseteq H $$ 
be the \textit{coradical filtration} of $H$, where $H_n=\Delta^{-1}(H\otimes H_{n-1} + H_0\otimes H)$ inductively, see \cite[Chapter 5]{MO93}. For pointed Hopf algebras $H$, this is indeed a Hopf algebra filtration since the coradical $H_0= \k G$ is a Hopf subalgebra of $H$ \cite[Lemma 5.2.8]{MO93}. Hence, we can consider the associated graded Hopf algebra $\grH = \bigoplus_{n \geq 0} H_n/H_{n-1}$, with convention $H_{-1}=0$. Note that the zero term of $\grH$ equals its coradical, i.e. $(\grH)_0=H_0$. There is a projection $\pi: \grH \rightarrow H_0$ and an inclusion $\iota: H_0 \rightarrow \grH$ such that $\pi \iota = \id_{H_0}$. Let $R$ be the algebra of coinvariants of $\pi$:
$$R:=(\grH)^{co\, \pi} = \{ h \in \grH \,: \, (\id \otimes \pi)\Delta(h)= h \otimes 1\}.$$
By a result of Radford \cite{R85} and Majid \cite{Mj}, $R$ is a graded braided Hopf algebra, that is, it is a Hopf algebra in the braided category $\YD$ of left Yetter-Drinfeld modules over $H_0=\k G$. Moreover, $\grH$ is the \textit{bosonization} (or \textit{Radford biproduct}) of $R$ and $H_0$ such that $\grH \cong R \# H_0$ with the following Hopf structure \cite[Theorem 10.6.5]{MO93}:

\begin{itemize}
 \item underlying vector space is the tensor product $R \otimes H_0$,
 \item multiplication $(r\#g)(r'\#g') = r(g \cdot r') \# (gg')$,
 \item comultiplication $\Delta(r\#g) = \sum r^{(1)} \# (r^{(2)})_{(-1)} g \otimes (r^{(2)})_{(0)} \# g$, 
 \item counit $\varepsilon(r \# g) = \varepsilon_R(r) \varepsilon_G(g)=\varepsilon_R(r)$,
 \item antipode $S(r\#g) = \sum (1 \# S_G(r_{(-1)}g))(S_R(r_{(0)}) \# 1)$,
\end{itemize} 
for all $r, r' \in R$ and $g, g' \in G$. Here, we use the Sweedler notation $\Delta_R(r) = \sum r^{(1)} \otimes r^{(2)}$ for the comultiplication of $R$; and the $H_0$-coaction on $R$, $\rho_R: R \rightarrow H_0 \otimes R$, is given by $\rho_R(r)=\sum r_{(-1)} \otimes r_{(0)}$. In this case, the Hopf algebra projection $\pi: R \# H_0 \rightarrow H_0$ and inclusion $\iota: H_0 \rightarrow R \# H_0$ are given by $\pi(r \# g) = \varepsilon(r)g$ and $\iota(g)= 1 \# g$, respectively. 

Our strategy, following the principle proposed in \cite{NA98}, is as follows: to study $H$, given $H_0=\k G$, we will first study $R$ as a Hopf algebra in $\YD$, then transfer its structures to $\grH$ via the bosonization $R \# H_0$, and finally we lift $\grH$ structures to $H$ via the filtration. We present the classification of pointed $p^3$-dimensional Hopf algebras $H$ in characteristic $p>0$ in Sections~\ref{sec:R} and \ref{sec:pointed p3} of this paper.

%%%%%%%%%%%%%%%%%%%%%%%%%%%%%%%%

\subsection{Yetter-Drinfeld modules}
\label{subsec:YD}
We say that a vector space $V$ is a left Yetter-Drinfeld module over $\k G$ if $V$ is both a $\k G$-module and $\k G$-comodule satisfying 
$$\rho_V(g \cdot v) =\sum gv_{(-1)}g^{-1} \otimes g \cdot v_{(0)},$$
for all $v\in V$ and $g\in G$. We denote by $\rho_V: V\to \k G\otimes V$ the $G$-coaction on $V$ such that $\rho_V(v)=\sum v_{(-1)}\otimes v_{(0)}$, for any $v\in V$. Since $V$ is a $\k G$-comodule,  $V=\bigoplus_{g \in G} V_g$ such that for any homogenous element $v\in V_g$ we have $\rho_V(v)=g\otimes v$, for $g \in G$. We call $g$ the \textit{$G$-grading} of $v \in V_g$. In particular, for an abelian group $G$, any Yetter-Drinfeld module $V \in \YD$ is a $G$-graded $\k G$-module, i.e. $V = \bigoplus_{g \in G} V_g$ such that each $V_g$ is a $\k G$-submodule of $V$.

We denote by  $\YD$  the category of all left Yetter-Drinfeld modules over $\k G$, where the morphisms are both $G$-linear and $G$-colinear maps. In general, the category $\YD$ is a braided monoidal (tensor) category. That is, for any $V, W\in \YD$, $V \otimes W$ is again an object of $\YD$ via action $g \cdot (v \otimes w) = \Delta(g) (v \otimes w)= (g \cdot v) \otimes (g \cdot w)$, for all $g \in G, v \in V$, and $w \in W$. The braiding structure is given by twisting maps $c_{V,W}: V \otimes W \rightarrow W \otimes V$, which is an isomorphism in the category $\YD$:
$$c_{V,W}(v \otimes w) = \sum v_{(-1)} \cdot w \otimes v_{(0)}$$
with inverse 
$$c_{V,W}^{-1} (w \otimes v) = \sum v_{(0)} \otimes S^{-1}(v_{(-1)}) \cdot w,$$
for all $v \in V$ and $w \in W$. 

Let $A, B$ be two $\k$-algebras in the category $\YD$, the tensor product $A\otimes B$ is again a $\k$-algebra in $\YD$ with multiplication given by $m_{A \otimes B}=(m_A \otimes m_B)(\id_A \otimes c_{B,A} \otimes \id_B)$ satisfying:
$$1_{A\otimes B} = 1_A \otimes 1_B \qquad \text{ and } \qquad (a \otimes b)(a' \otimes b') = a(b_{(-1)} \cdot a') \otimes b_{(0)}b'$$
for all $a, a' \in A$ and $b, b' \in B$.  

\subsection{Braided Hopf algebra $R$}
Following the above notations, we say that $R$ is a braided Hopf algebra in the category $\YD$ of Yetter-Drinfeld modules over group algebra $H_0=\k G$, with the following structures (c.f.~\cite[\S 1]{NA02}): 
\begin{itemize}
 \item The action of $H_0$ on $R$ is given by the adjoint action composed with the inclusion $\iota: H_0 \rightarrow \grH$. The coaction of $H_0$ on $R$ is given by $ \rho_R = (\pi \otimes \id)\Delta: R\to H_0 \otimes R$. 
 \item $R$ is a Hopf algebra in $\YD$: it is a subalgebra of $\grH$ and a coalgebra with comultiplication $\Delta_R(r)=r_{(1)}\iota \pi S(r_{(2)}) \otimes r_{(3)}$ and antipode $S_R(r) = \iota \pi(r_{(1)})S(r_{(2)})$, for all $r \in R$. 
\end{itemize} 

Observe that $R$ inherits the usual $\mathbb{N}$-grading from $\grH$, so $R= \bigoplus_{n \geq 0} R_n$ is a graded braided Hopf algebra, where each $R_n:=R \cap (\grH)_n$ is a Yetter-Drinfeld submodule of $R$ over $H_0= \k G$. Let $V=\{ r \in R \,| \, \Delta_R(r) = r \otimes 1 + 1 \otimes r\}$ be the space of primitive elements of $R$. By the discussion in \cite[\S 2]{NA98}, we have $\grH$ is  coradically graded, and it follows that $V=R_1$, that is, all primitive elements of $R$ has degree 1 in $R$ \cite[Lemma 2.4]{NA98}. Moreover, $V$ is a Yetter-Drinfeld submodule of $R$ and its braiding $c:=c_{V,V}: V \otimes V \rightarrow V \otimes V$ is called the \textit{infinitesimal braiding} of $H$. The dimension of $V$ is called the \textit{rank} of $H$. The subalgebra $\BV$ of $R$ generated over $\k$ by $V$ is a braided Hopf subalgebra whose structure only depends on the infinitesimal braiding of $V$, called the \textit{Nichols algebra} of $V$, c.f.~\cite[\S 2]{NA02}. 

We remark here that the structure of $R$ in characteristic $p>0$ is different than that in characteristic zero: There is a long-standing conjecture by Andruskiewitsch and Schneider, e.g.~\cite[Conjecture 2.7]{NA02}, that in characteristic zero, if $H_0$ is an abelian group algebra then $R$ is always primitively generated, $R = \langle R_0 \oplus R_1 \rangle$. This is proved by Angiono and Iglesias in \cite[Theorem 2.6]{AI} for the case in which the infinitesimal braiding is of standard type. Thus, in such case, $R=\BV$, the Nichols algebra of the Yetter-Drinfeld submodule $V=R_1$. However, in characteristic $p>0$, $R$ may not be primitively generated in general and there may exist non-primitive generator(s) in $R$, e.g.~\cite[Example 2.5]{NA02}. We will consider an example of this case later in Section~\ref{subsec:non-prim}.

%%%%%%%%%%%%%%%%%%%%%%%%%%%%%%%%

\subsection{Simple and indecomposable $\YD$-modules}
\label{subsec:simpleYD}

Let $\k$ be a base field of arbitrary characteristic and $G$ be any finite group. We look at the simple and indecomposable Yetter-Drinfeld modules over $\k G$. This work has been previously discussed by other authors such as \cite[Proposition 3.1.2]{AG99} and \cite[Corollary 3.2]{KMM}. We present here a ring-theoretical approach, independent of the characteristic of the field $\k$. 

Recall that the \textit{Drinfeld (quantum) double} $D(H)$ of any finite-dimensional Hopf algebra $H$ over $\k$ is again a Hopf algebra and is defined as follows. As a coalgebra, $D(H) = H^{*cop} \otimes H$, where $H^*=\Hom_\k(H,\k)$ is the vector space dual of $H$ which is again a Hopf algebra, and the comultiplication of $H^{*cop}$ is opposite to that of $H^*$. Denote $f \otimes x$ by $f \Join x \in D(H)$, for any $f \in H^{*cop} = H^*$ and $x \in H$, then the multiplication in $D(H)$ is given by
$$(f \Join x)(g \Join y) = \sum f(x_{(1)} \rightharpoonup g \leftharpoonup S^{-1}(x_{(3)})) \Join (x_{(2)}y),$$
for any $f, g \in H^{*cop} = H^*$ and $x,y \in H$, where $x \rightharpoonup g$ and $g \leftharpoonup x \in H^{*cop} = H^*$ are defined as:
$$\langle x \rightharpoonup g, y \rangle = \langle g, yx \rangle, \qquad \langle g \leftharpoonup x, y \rangle = \langle g, xy \rangle.$$

For any finite group $G$, we have $D(G):= D(\k G) = \k^G \# \k G$, where $\k^G := (\k G)^*= \text{Span}\{ \delta_g \,| \, g \in G\}$, with action of $\k G$ on $\k^G$ via $g \cdot \delta_h = \delta_{ghg^{-1}}$, for any $g \in G$ and $\delta_h \in \k^G$. In particular, $D(G)$ is generated by all choices of $g=(\varepsilon \Join g)$ and $\delta_h = (\delta_h \Join 1_G)$, for $g, h \in G$, satisfying $g \delta_h = (\varepsilon \Join g)(\delta_h \Join 1_G)= \delta_{ghg^{-1}} \Join g$.

Let $G= \sqcup_{i=1}^n \O_i$ be the conjugacy class decomposition of $G$. We see that $\k^{\O_i} = \text{Span}\{ \delta_h \, | \, h \in \O_i \}$ is a subring of $\k^G$. Clearly from the definition, we have 

\begin{lem}
\label{lem:DG}
$D(G) \cong \bigoplus_{i=1}^n (\k^{\O_i} \# \k G)$ as finite-dimensional algebras. 
\end{lem}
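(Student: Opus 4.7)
The plan is to exhibit a system of central orthogonal idempotents of $D(G)$ indexed by the conjugacy classes of $G$, one for each direct summand. The candidate idempotents are the natural ones
\[
e_i \;=\; \sum_{h\in\O_i}\delta_h \;\in\;\k^G\;\subseteq\;D(G),
\]
which are the multiplicative identities of the subalgebras $\k^{\O_i}\subseteq\k^G$. First I would record that, as commutative $\k$-algebras, $\k^G=\bigoplus_{i=1}^n\k^{\O_i}$ is a direct sum of (two-sided) ideals, since $\delta_h\delta_{h'}=\delta_{h,h'}\delta_h$. Thus the $e_i$ are orthogonal idempotents in $\k^G$ with $\sum_i e_i=\varepsilon=1_{D(G)}$.

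Next I would verify that each $e_i$ is actually central in $D(G)$. Commutativity with elements of $\k^G$ is automatic. For $g\in G\subseteq D(G)$ we compute, using the defining relation $g\delta_h=\delta_{ghg^{-1}}g$, that
\[
g\,e_i \;=\; \sum_{h\in\O_i}\delta_{ghg^{-1}}\,g \;=\; \sum_{h'\in\O_i}\delta_{h'}\,g \;=\; e_i\,g,
\]
where the reindexing is valid precisely because conjugation by $g$ permutes the conjugacy class $\O_i$. Since $D(G)$ is generated by $\k^G$ and $\k G$, this proves centrality.

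Once centrality, orthogonality, and completeness $\sum_i e_i=1$ are established, standard ring theory gives $D(G)=\bigoplus_{i=1}^n e_iD(G)$ as a direct sum of two-sided ideals, each of which is a unital $\k$-algebra with identity $e_i$. It remains to identify
\[
e_i\,D(G) \;=\; e_i\bigl(\k^G\#\k G\bigr) \;=\; (e_i\,\k^G)\#\k G \;=\; \k^{\O_i}\#\k G,
\]
which follows because $e_i$ acts as the identity on $\k^{\O_i}$ and annihilates $\k^{\O_j}$ for $j\neq i$, and because $\k^{\O_i}$ is stable under the conjugation action of $\k G$ so that the smash product $\k^{\O_i}\#\k G$ makes sense as a subalgebra. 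Matching multiplications is routine: the smash product rule $(f\# g)(f'\# g')=f(g\cdot f')\#gg'$ on $\k^{\O_i}\#\k G$ coincides with the restriction of the $D(G)$-multiplication to the ideal $e_iD(G)$.

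I do not expect any real obstacle; the only thing that requires a sentence of care is ensuring the identity of the summand algebra $\k^{\O_i}\#\k G$ is $e_i\# 1$ rather than $1\#1$, so the direct sum decomposition is as non-unital $\k$-algebras summing to a unital algebra with identity $\sum_i(e_i\#1)=1\#1$, and to confirm that the conjugation action on each $\k^{\O_i}$ is well-defined (which is exactly the statement that each $\O_i$ is stable under conjugation).
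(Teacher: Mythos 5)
Your proof is correct and is essentially the argument the paper has in mind: the paper simply asserts the lemma is "clear from the definition," relying on the facts that $\k^G=\bigoplus_i \k^{\O_i}$ as algebras and that each $\k^{\O_i}$ is stable under the conjugation action of $G$, which is exactly what your central orthogonal idempotents $e_i=\sum_{h\in\O_i}\delta_h$ encode. Your write-up just makes the routine verification explicit, including the key reindexing step $g\,e_i=e_i\,g$ coming from conjugation permuting $\O_i$.
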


Let $R$ be any ring and $e \in R$ be an idempotent of $R$ such that $ReR=R$. 

\begin{lem}
\label{lem:Morita}
The two categories $R$-Mod and  $eRe$-Mod are Morita equivalent. 
\end{lem}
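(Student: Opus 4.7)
The plan is to verify that $P = Re$ is a finitely generated projective generator (a progenerator) for $R\text{-Mod}$ with $\End_R(P)^{op} \cong eRe$, and then to invoke the classical Morita theorem. The resulting quasi-inverse equivalences are
\[ F = \Hom_R(Re,-) \cong eR \otimes_R -:R\text{-Mod} \to eRe\text{-Mod}, \qquad G = Re \otimes_{eRe} -: eRe\text{-Mod} \to R\text{-Mod}, \]
where $Re$ carries its natural $(R, eRe)$-bimodule structure from multiplication in $R$, and likewise $eR$ carries an $(eRe, R)$-bimodule structure.

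First I would check the module-theoretic properties of $Re$. Since $e^2 = e$, the left $R$-module decomposition $R = Re \oplus R(1-e)$ exhibits $Re$ as a cyclic direct summand of the regular module, hence finitely generated and projective. For the generator property, I would use the hypothesis $ReR = R$ to write $1 = \sum_{i=1}^n x_i e y_i$ for some $x_i, y_i \in R$; the $R$-linear map $(Re)^n \to R$, $(a_1, \dots, a_n) \mapsto \sum_i a_i y_i$ then sends $(x_1 e, \dots, x_n e)$ to $1$, so its image is all of $R$, showing that $Re$ generates the regular module and hence every object of $R\text{-Mod}$. Next, any $R$-linear endomorphism $\phi: Re \to Re$ is determined by $\phi(e)$; the relation $\phi(e) = \phi(e^2) = e\phi(e)$ forces $\phi(e) \in eRe$, while each $z \in eRe$ conversely defines an endomorphism $re \mapsto rez$. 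A routine composition check yields a ring isomorphism $\End_R(Re)^{op} \cong eRe$.

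With these pieces in hand, the Morita theorem immediately delivers the equivalence $R\text{-Mod} \simeq eRe\text{-Mod}$ via $F$ and $G$. Equivalently, one may verify the Morita context directly, reducing the statement to showing that the multiplication maps $\mu: Re \otimes_{eRe} eR \to R$ and $\nu: eR \otimes_R Re \to eRe$ are bimodule isomorphisms. The map $\nu$ is straightforward: every simple tensor collapses to $e \otimes re$ by sliding $e$ across the tensor over $R$, and $\nu(e \otimes re) = ere$ is an isomorphism. Surjectivity of $\mu$ is immediate from $ReR = R$. I expect the only mildly delicate step to be the injectivity of $\mu$: given a kernel element $\sum_i x_i e \otimes e y_i$, one inserts $1 = \sum_k a_k e b_k$ into the right-hand factor, uses $e y_i a_k e b_k = (ey_i a_k e)(eb_k)$, and slides each $ey_i a_k e \in eRe$ across the $eRe$-tensor to rewrite the sum as $\sum_k \bigl(\sum_i x_i e y_i a_k\bigr) e \otimes e b_k$; the inner sum vanishes because $\sum_i x_i e y_i = 0$. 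This sliding step is the unique place where the full-idempotent hypothesis $ReR = R$ is used in an essential way.
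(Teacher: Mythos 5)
Your proof is correct and takes essentially the same route as the paper, which works with the very same functors $F = eR\otimes_R -$ and $G = Re\otimes_{eRe} -$ and simply asserts that $GF$ and $FG$ are the identities; your verification that $\mu: Re\otimes_{eRe} eR \to R$ and $\nu: eR\otimes_R Re \to eRe$ are isomorphisms (equivalently, that $Re$ is a progenerator with $\End_R(Re)^{op}\cong eRe$) is precisely the content the paper leaves as ``easy to check.'' In particular, your injectivity argument for $\mu$ --- inserting $1=\sum_k a_k e b_k$ and sliding $ey_ia_ke\in eRe$ across the $eRe$-tensor --- is the one genuinely nontrivial step, and it is carried out correctly.
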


\begin{proof}
Consider the Schur functor $F: R$-Mod $\rightarrow eRe$-Mod and its natural inverse $G: eRe$-Mod $\rightarrow R$-Mod, defined via $F: M \mapsto eR \otimes_R M$ and $G: N \mapsto Re \otimes_{eRe} N$. It is easy to check from the construction that $GF = \id_{R-\text{Mod}}$ and $FG = \id_{eRe-\text{Mod}}$. 
\end{proof}

\begin{lem} In each finite-dimensional algebra $R=\k^{\O_i} \# \k G$, we denote idempotent $e=(\delta_g \# 1_G)$, for any choice of $g \in \O_i$. Then:
\begin{enumerate}[(a)]
\item $1_R=(\sum_{h\in \O_i}\delta_h)\# 1_G$.
 \item $ReR=R$.
 \item $eRe \cong \k C(g)$, where $C(g)$ is the centralizer of $g$ in $G$. 
\end{enumerate}
\end{lem}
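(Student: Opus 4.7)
The plan is to verify all three parts by direct computation in the smash product $R = \k^{\O_i} \# \k G$, exploiting that $G$ acts transitively on $\O_i$ by conjugation. Part (a) is immediate: the characteristic function $\sum_{h \in \O_i} \delta_h$ is the unit of the subring $\k^{\O_i} \subseteq \k^G$, so tensoring it with $1_G$ gives the unit of the smash product.

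For (b), the key is to conjugate the idempotent $e$ by group elements. Using the smash-product relation one obtains $(1 \# x)(\delta_g \# 1_G)(1 \# x^{-1}) = \delta_{xgx^{-1}} \# 1_G$ for every $x \in G$, and transitivity of the conjugation action on the class $\O_i$ then shows $\delta_h \# 1_G \in ReR$ for every $h \in \O_i$. Summing over $h \in \O_i$ and invoking part (a) yields $1_R \in ReR$, hence $ReR = R$.

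For (c), I would produce an explicit $\k$-basis of $eRe$ and then exhibit an algebra isomorphism onto $\k C(g)$. Applying the multiplication rules $\delta_h \delta_{h'} = \delta_{h,h'}\delta_h$ and $(1 \# x)\delta_h = \delta_{xhx^{-1}}(1 \# x)$ to the sandwich $e(\delta_h \# x)e$ shows the product vanishes unless both $h = g$ and $xgx^{-1} = g$, i.e.~$x \in C(g)$, in which case it equals $\delta_g \# x$. Thus $\{\delta_g \# x : x \in C(g)\}$ is a $\k$-basis of $eRe$, and the linear map $\varphi : \k C(g) \to eRe$ defined by $\varphi(x) = \delta_g \# x$ is bijective. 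It sends $1 \in C(g)$ to $e$, and the identity $(\delta_g \# x)(\delta_g \# y) = \delta_g(x \cdot \delta_g) \# xy = \delta_g \# xy$ for $x, y \in C(g)$ gives multiplicativity.

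None of these steps presents a genuine obstacle; the only subtlety is to keep track of the Yetter-Drinfeld twist, in particular noticing in (c) that the action $x \cdot \delta_g = \delta_{xgx^{-1}}$ trivializes precisely on $C(g)$, which is what collapses $eRe$ back to an ordinary group algebra.
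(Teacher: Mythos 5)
Your proposal is correct and follows essentially the same computations as the paper: part (a) is the same observation, part (c) is the identical sandwich calculation $e(\delta_h\#x)e$, and part (b) differs only cosmetically (you show $1_R\in ReR$ by conjugating $e$ and summing, whereas the paper directly generates every basis element $\delta_{hgh^{-1}}\#hh'$ of $R$ from $e$). Both routes rely on transitivity of conjugation on $\O_i$ and are interchangeable.
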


\begin{proof} Fix any choice of $g \in \O_i$, it is clear that $e=(\delta_g \# 1_G)=(\delta_g \# 1_G)(\delta_g \# 1_G)=e^2$ is an idempotent. 

(a) Write $\id=\sum_{h\in \O_i}\delta_h$. It is clear that $\id\delta_h=\delta_h\id=\delta_h$ for any $h\in \O_i$. Thus $\id\#1_G$ is the identity. 

(b) Observe that for any $h, h' \in G$, $(\id \# h)(\delta_g \# 1_G)(\id \# h')=(\delta_{hgh^{-1}} \# hh')$. By varying all choices of $h,h' \in G$, this identity will give us all of $(\k^{\O_i} \# \k G)$.

(c) For any $\delta_h \in \k^{\O_i}$ and $h' \in G$,
\begin{align*} 
(\delta_g \# 1_G)(\delta_h \# h')(\delta_g \# 1_G) &= \delta_g \delta_h \delta_{h'g(h')^{-1}} \# h' \\ &= \begin{cases} 0, & \quad h \neq g \text{ or } h'g(h')^{-1} \neq g, \\
\delta_g \# h', & \quad h=g \text{ and } h' \in C(g).
\end{cases} \end{align*}
Since $g$ is fixed, by identifying $\delta_g \# h' \leftrightarrow h'$, we obtain the statement as claimed.
\end{proof}

The following results follow from Lemmas~\ref{lem:DG} and \ref{lem:Morita}:

\begin{cor}\label{C:SImodule}
\begin{enumerate}[(a)]
 \item The two categories $(\k^{\O_i} \# \k G)$-Mod and $\k C(g)$-Mod are Morita equivalent, for any $g \in \O_i$.
 \item $D(G)$-Mod is Morita equivalent to $\left(\bigoplus_{i=1}^n \k C(g_i)\right)$-Mod, for any choice of $g_i \in \O_i$.
\end{enumerate}
As a consequence, there are bijections between
$$\displaystyle \{ \text{simple } D(G)\text{-modules} \} \longleftrightarrow \sqcup_{1 \le i \le n} \left\{ \text{simple } \k C(g_i)\text{-modules} \right\},$$
and
$$\displaystyle \{ \text{indecomposable } D(G)\text{-modules} \} \longleftrightarrow \sqcup_{1 \le i \le n} \left\{ \text{indecomposable } \k C(g_i)\text{-modules} \right\}.$$
\end{cor}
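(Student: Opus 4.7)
The plan is to chain together the three preceding lemmas; no new ingredients are required. For part (a), I would apply \Cref{lem:Morita} to the algebra $R=\k^{\O_i}\#\k G$ with the idempotent $e=\delta_g\#1_G$. The lemma immediately preceding the corollary already verifies both hypotheses: it exhibits $e$ as an idempotent with $ReR=R$ and identifies $eRe\cong\k C(g)$. The Morita equivalence between $R$-Mod and $\k C(g)$-Mod is then immediate.

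For part (b), the starting point is \Cref{lem:DG}, which gives the algebra decomposition $D(G)\cong\bigoplus_{i=1}^n(\k^{\O_i}\#\k G)$. Since this is an internal direct sum of two-sided ideals whose unit elements form a complete family of orthogonal central idempotents summing to $1_{D(G)}$, the module category splits as a product, so $D(G)$-Mod is equivalent to $\prod_{i=1}^n (\k^{\O_i}\#\k G)$-Mod. Applying part (a) componentwise yields an equivalence with $\prod_{i=1}^n \k C(g_i)$-Mod, and the same product-decomposition principle, applied on the target side, identifies this with $\left(\bigoplus_{i=1}^n \k C(g_i)\right)$-Mod.

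The two asserted bijections should then follow by general nonsense. Since any Morita equivalence is in particular an equivalence of abelian categories, it preserves isomorphism classes of simple objects and of indecomposable objects: the Schur functor $M\mapsto eM$ and its inverse $N\mapsto Re\otimes_{eRe}N$ supplied by \Cref{lem:Morita} are additive and exact, and their units and counits are natural isomorphisms. Combined with the fact that the simple (respectively indecomposable) modules over a product algebra $A_1\times\cdots\times A_n$ are exactly the disjoint union of the simple (respectively indecomposable) modules over the individual factors $A_i$, the equivalence of (b) produces the two displayed bijections.

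I do not anticipate a genuine obstacle, as the real content sits in the earlier lemmas. The one point that deserves a careful word in the write-up is the passage from \emph{direct sum of algebras as two-sided ideals} to \emph{product of module categories}, which relies on the existence of orthogonal central idempotents summing to $1$; this is a standard piece of Morita theory but is the only place where an argument not already recorded above is being invoked.
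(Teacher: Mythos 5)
Your proposal is correct and matches the paper's own (essentially unwritten) argument: the paper simply asserts that the corollary follows from Lemma~\ref{lem:DG}, Lemma~\ref{lem:Morita}, and the idempotent computation $e=\delta_g\#1_G$ with $ReR=R$ and $eRe\cong\k C(g)$, which is precisely the chain you spell out. Your additional care about passing from the direct-sum decomposition of $D(G)$ into two-sided ideals to a product of module categories, and about Morita equivalences preserving simples and indecomposables, just makes explicit what the paper leaves implicit.
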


It is well-known that the category of Yetter-Drinfeld modules $\YD$ is equivalent to the category of left $D(G)$-modules for any finite group $G$. The above bijections can then be written in details so that every simple (resp.~indecomposable) $\YD$-module is given in the form of $\k G \otimes_{C(g_i)} M$, for some $g_i \in \O_i$ and some simple (resp.~indecomposable) module $M$ over $C(g_i)$. Moreover, the $\k G$-module structure is given as the induced module ($\k G \otimes_{C(g_i)} - )$ and the $\k G$-comodule structure is given by $\rho(x \otimes m) = xg_ix^{-1} \otimes (x \otimes m)$, for any $(x \otimes m) \in \k G \otimes_{C(g_i)} M$.

\begin{remark}
\label{simple1dim}
Let $G$ be a finite $p$-group, and char. $\k = p>0$. Then every simple module in $\YD$ is of the form $\k G \otimes_{C(g_i)} \k$, where $\k$ is the trivial module over $C(g_i)$, for some choice $g_i \in \O_i$ and $\rho(x \otimes 1) = xg_ix^{-1} \otimes (x \otimes 1)$, for any $x \in G$. 
In particular, if $G=C_p$ is a cyclic group of prime order $p$ and char. $\k=p$, then every simple $\YD$-module is one-dimensional with trial $G$-action and some $G$-grading. 
\end{remark}

%%%%%%%%%%%%%%%%%%%%%%%%%%%%%%%%

\subsection{Braiding types of $\YD$-modules and Nichols algebras.}
\label{subsec:Nichols}

A full description of Nichols algebras is given in e.g.~\cite[\S 2 and \S 3]{NA02}. Here, we only focus on the case with $H_0=\k G$ and $G$ is a finite group.  
Let $V$ be a Yetter-Drinfeld module in $\YD$ of finite dimension $d$ with fixed basis $\{x_1, x_2, \ldots, x_d\}$. Let $\chi: G \rightarrow \k$ be any character on $G$. We denote
$$V_g := \{x  \in V \, | \, \rho(x) = g \otimes x \}, \quad V^{\chi} := \{x \in V  \, | \, g \cdot x = \chi(g)x, g \in G\}.$$
The support of $V$ is defined to be
$$\text{supp}(V):=\{g\in  G\, |\, V_g\neq 0\}$$
and we denote by $G_V$ the subgroup of $G$ generated by $\text{supp}(V)$. Note that we can consider $V$ as a Yetter-Drinfeld module over $G_V$ by natural restriction. 

There are two types of braiding $c: V \otimes V \rightarrow V \otimes V$ in which we are interested in this paper: 

\begin{itemize}
 \item $V = \D(d)$ is of \textit{diagonal type}: 
 $$c(x_i \otimes x_j) = q_{ij}x_j \otimes x_i,$$
for all $1\le i, j \in d$, where $q_{ij} \in \k^\times$. We call $\q = (q_{ij})_{1 \le i,j \le d}$ the braiding matrix. Moreover, we say that a braided vector space $(V,c)$ of diagonal type is \textit{realizable} as some Yetter-Drinfeld module in $\YD$ if $V$ admits a \textit{principal $\mathcal{YD}$-realization}, that is, if we can find a family $(g_i,\chi_i)_{1 \le i \le d}$ of elements $g_i \in Z(G)$ and characters $\chi_i \in \widehat{G}$ such that $\chi_j(g_i) = q_{ij} \in \k^\times$, for all $1 \le i,j \le d$. In this case, we can consider $V$ as a Yetter-Drinfeld module in $\YD$ by defining $x_i \in V_{g_i}^{\chi_i} = V_{g_i} \cap V^{\chi_i}$, for all $1 \le i \le d$, \cite[\S 2]{AI1}. 

 \item One important example of the non-diagonal types is the Jordan type. We say $V = \J(t,d)$ is of \textit{Jordan type}, which only happens when $d \geq 2$: 
 $$c(x_i \otimes x_1) = tx_1 \otimes x_i \qquad \text{and} \qquad c(x_i \otimes x_j)= (tx_j + x_{j-1}) \otimes x_i,$$ 
for all $1\le i\le d, 2\le j\le d$, and some $t \in \k^{\times}$. The braiding matrix is of the form: 
$$\begin{pmatrix}
   t & 0 & 0 & \cdots & 0 & 0 & 0 \\       
   1 & t & 0 & \cdots & 0 & 0 & 0 \\
   0 & 1 & t & \cdots & 0 & 0 & 0 \\   
   \vdots & & & \ddots & & & \vdots  \\ 
   0 & 0 & 0 & \cdots & t & 0 & 0 \\   
   0 & 0 & 0 & \cdots & 1 & t & 0 \\
   0 & 0 & 0 & \cdots & 0 & 1 & t
 \end{pmatrix}_{d\times d}$$ 
We say that the braided vector space $(V,c)$ of Jordan type is \textit{realizable} as some Yetter-Drinfeld module in $\YD$ if $V$ admits a \textit{non-principal $\mathcal{YD}$-realization}, that is, if we can find a pair $(\varphi,g)\in \text{rep}_d(G)\times Z(G)$ such that each basis $x_i$ has the same $G$-grading via coaction $\rho(x_i) = g \otimes x_i$, and $\varphi: G\to \text{GL}(V)$ is a representation of $G$ on $V$ satisfying $\varphi(g) \cdot x_1 = tx_1$ and $\varphi(g)  \cdot x_i = tx_i + x_{i-1}$ for all $2\le i\le d$. 
\end{itemize}
Note that the Nichols algebra $\BV$ is uniquely determined by the braiding $c$, so $\BV$ only depends on the (co)actions of $G$ on $V$ as described in above cases. 

\begin{prop}
\label{prop:EquivConDiagonalJordan}
Let $G$ be a finite group, and $(V,c)$ be a Yetter-Drinfeld module over $G$ of dimension $d < \infty$. Then
\begin{enumerate}
\item $(V,c)$ is of diagonal type if $V$ is a direct sum of one-dimensional Yetter-Drinfeld modules over $G_V$. Moreover, $V$ has a principal $\mathcal{YD}$-realization if and only if $V$ is a direct sum of one-dimensional Yetter-Drinfeld modules over $G$.
\item $(V,c)$ is of Jordan type if there is some central group element $g\in G$ and $t\in \k^\times$ such that all elements in $V$ have the same $G$-grading given by $g$ and the minimal polynomial for the $g$-action on $V$ is $(g-t)^d$, with $d= \dim V\ge 2$. 
\end{enumerate}
\end{prop}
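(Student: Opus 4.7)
The plan is to unfold the defining braiding formula $c(v \otimes w) = \sum v_{(-1)} \cdot w \otimes v_{(0)}$ of a Yetter-Drinfeld module and compare it with the explicit matrix forms of diagonal and Jordan type recalled in \S\ref{subsec:Nichols}. In each part, one direction constructs the braiding from the prescribed action/coaction data, while the other extracts that data from the shape of $c$.

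For Part (1), the forward direction is a direct computation: if $V = \bigoplus_{i=1}^d \k x_i$ with each $\k x_i$ a one-dimensional YD submodule over $G_V$, set $\rho(x_i) = g_i \otimes x_i$ and $g \cdot x_i = \chi_i(g) x_i$ for characters $\chi_i$ of $G_V$; then $c(x_i \otimes x_j) = g_i \cdot x_j \otimes x_i = \chi_j(g_i)\, x_j \otimes x_i$ puts $c$ in diagonal form with $q_{ij} = \chi_j(g_i)$. Conversely, starting from $c(x_i \otimes x_j) = q_{ij}\, x_j \otimes x_i$, I would decompose $V = \bigoplus_{g \in G} V_g$ via the coaction and write $\rho(x_i) = \sum_g g \otimes p_g(x_i)$ with $p_g(x_i) \in V_g$. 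Substituting into the braiding formula gives $\sum_g (g \cdot x_j - q_{ij} x_j) \otimes p_g(x_i) = 0$, and linear independence of the graded components $p_g(x_i)$ forces $g \cdot x_j = q_{ij}\, x_j$ for every $g$ in the $G$-support of $x_i$ and every $j$. Since these supports generate $G_V$, the whole $G_V$ acts diagonally in the $\{x_i\}$-basis, and combining with YD compatibility yields $V = \bigoplus_i \k x_i$ as a direct sum of one-dimensional YD submodules over $G_V$. The principal realization refinement is then immediate: a one-dimensional YD submodule over the full group $G$ requires $g_i \in Z(G)$ (from $\rho(h \cdot x_i) = hg_ih^{-1} \otimes h \cdot x_i$ having to equal $g_i \otimes h \cdot x_i$) and $\chi_i$ extending to a character of $G$, which is exactly the data of a principal YD-realization.

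For Part (2), the hypothesis that all of $V$ sits in the $G$-grade $g$ gives $\rho(v) = g \otimes v$ for every $v \in V$, and YD compatibility $\rho(h \cdot v) = hgh^{-1} \otimes h\cdot v$ forces $g \in Z(G)$. The braiding then reduces to $c(x_i \otimes x_j) = g \cdot x_j \otimes x_i$, so the Jordan-type shape of $c$ in a basis $\{x_1,\dots,x_d\}$ is equivalent to the matrix of the $g$-action in that basis being a single Jordan block of size $d$ with eigenvalue $t$. By elementary linear algebra this happens precisely when the minimal polynomial of the $g$-action is $(X-t)^d$ with $d = \dim V \ge 2$, which gives both implications at once after choosing (or producing) an appropriate Jordan basis.

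The main obstacle I anticipate is the converse of Part (1): after showing that every $g$ in the $G$-support of $x_i$ acts on $\{x_j\}$ by the same scalar $q_{ij}$, one still has to rule out that $x_i$ could be a genuine sum of homogeneous components living in more than one graded piece. The cleanest way to close this gap is to invoke the YD identity $\rho(h \cdot v) = hv_{(-1)}h^{-1} \otimes h \cdot v_{(0)}$, which shows $\mathrm{supp}(x_i)$ is stable under $G_V$-conjugation, and then to match this against the fact that $x_i$ lies in a single joint $G_V$-character eigenspace $V^{\chi_i}$; together these pin $\mathrm{supp}(x_i)$ down to a single element $g_i$, after which the rest of the argument is bookkeeping with characters of $G_V$.
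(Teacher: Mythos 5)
Your proof of what the proposition actually asserts is correct and is essentially the paper's argument: for part (1) the forward computation $c(x_i\otimes x_j)=g_i\cdot x_j\otimes x_i=\chi_j(g_i)\,x_j\otimes x_i$ is exactly what the paper writes down, for part (2) the paper likewise passes to a Jordan basis for the $g$-action and reads off the Jordan-type braiding from $c(x_i\otimes x_j)=g\cdot x_j\otimes x_i$, and the ``Moreover'' equivalence is handled the same way (the paper dismisses it as ``similar''); your remark that YD compatibility forces $g_i\in Z(G)$ for a one-dimensional YD submodule over all of $G$ is the right reason that refinement works.

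One caution: both numbered claims are stated as one-way implications (``is of diagonal/Jordan type \emph{if} \dots''), so the converses you sketch are not required, and the converse you attempt for the first sentence of part (1) contains a genuine gap. The step where you ``pin $\mathrm{supp}(x_i)$ down to a single element'' fails: take $G=C_p\times C_p$ in characteristic $p$ acting trivially on $V=V_g\oplus V_h$ with $g\neq h$. Then \emph{every} basis of $V$ puts $c$ in diagonal form with all $q_{ij}=1$, yet a generic basis vector has two-element support; conjugation-stability of the support is vacuous for an abelian group, and the condition of lying in a single character eigenspace is automatic for the trivial action, so neither ingredient forces homogeneity. A correct converse would have to permit a change to a homogeneous basis rather than asserting that the given diagonalizing basis is homogeneous. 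Since the proposition does not claim this converse, this does not undermine your proof of the stated result, but the paragraph as written should not be presented as a proof of an equivalence.
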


\begin{proof}
(1) View $V$ as a Yetter-Drinfeld module over $G_V$. Suppose $V=\oplus_{1\le i\le d} V_i$ is a direct sum of one-dimensional Yetter-Drinfeld modules over $G_V$. Let $x_j$ be a basis for $V_j$, for all $1 \le j \le d$. Then one sees that, for any $V_j$, the $G_V$-action is given by some character $\chi_j: G_V\to \k$ such that $h\cdot x_j=\chi_j(h)x_j$, for any $h\in G_V$; and the $G_V$-coaction is given by some central element $g_j\in G_V$ such that $\rho(x_j)=g_j\otimes x_j$. As a consequence, $V$ is of diagonal type with braiding matrix given by $\q=(q_{ij})_{1\le i,j\le d}$ with $q_{ij}=\chi_j(g_i)$. The argument for principal $\mathcal{YD}$-realization is similar.

(2) Suppose there are $g\in Z(G)$ and $t\in \k^\times$ such that all elements in $V$ are of the same $G$-grading $g$ and the minimal polynomial for the $g$-action on $V$ is $(g-t)^d$ with $d=\dim V\ge 2$. It follows from the Jordan block decomposition that there exists a basis $x_1,\dots,x_d$ such that $g\cdot x_1=tx_1$ and $g\cdot x_j=tx_j+x_{j-1}$, for all $2\le j\le d$. Then it is clear that the braiding of $V$ associated to this basis is of Jordan type. 
\end{proof}

%%%%%%%%%%%%%%%%%%%%%%%%%%%%%%%%

\subsection{Hochschild cohomology of Nichols algebras}
\label{subsec:Hochschild}

Let $G$ be a finite group, and $V$ be a Yetter-Drinfeld module over $G$. Denote by $\BV$ the Nichols algebra of $V$, which is a connected graded braided Hopf algebra in $\YD$. We will use the cobar construction on $\BV$ (cf. \cite{DZZ}), denoted by $\Omega \BV$,  to compute its Hochschild cohomology.
\begin{itemize}
\item  As graded algebras, $\Omega \BV = T(\BV^+)$ is the tensor algebra of $\BV^+:=\{ \text{all grading } \geq 1 \text{ parts of } \BV \}$.
\item The differentials $\partial^n: (\BV^+)^{\otimes n} \rightarrow (\BV^+)^{\otimes (n+1)}$ are given by
\[
\partial^n=\sum_{i=0}^{n-1}(-1)^{i+1}\, 1^{\otimes i} \otimes \overline{\Delta}\otimes 1^{\otimes (n-i-1)},
\]
where $\overline{\Delta}(r)=\Delta(r)-r\otimes 1-1\otimes r$, for any $r\in\BV^+$.
\end{itemize}
The differentials in low degrees are explicitly given as follows:
\begin{align*}
\partial^1(x) &= \Delta(x) - x \otimes 1 - 1 \otimes x, \\
\partial^2(x \otimes y) &= 1 \otimes x \otimes y - \Delta(x) \otimes y + x \otimes \Delta(y) - x \otimes y \otimes 1
\end{align*}
for $x,y\in \BV^+$. The $i$-th Hochschild cohomology of $\BV$ is defined to be 
\[
\HL^i(\BV,\k):=\HL^i(\Omega \BV,\partial)=\Ker\, \partial^i/\Img\, \partial^{i-1}.
\]

\begin{lem}\label{Cobar}
Let $V^*$ be the dual of $V$ in $\YD$. Suppose $\BV$ is finite-dimensional. Then $\HL^i(\BV,\k)=\HH^i(\B(V^*),\k)$, where $\HH^i(\B(V^*),\k)$ is the $i$-th Hochschild cohomology of $\B(V^*)$ with coefficient in the trivial module $\k$.
\end{lem}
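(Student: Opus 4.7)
The plan is to identify the cobar complex $\Omega\BV$ with the normalized Hochschild bar complex of $\B(V^*)$ via finite-dimensional $\k$-linear duality, with the standard Nichols-algebra duality $\BV^*\cong \B(V^*)$ as the main ingredient.

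The first step is to invoke this duality: for any finite-dimensional Yetter--Drinfeld module $V$ over $\k G$, the graded $\k$-linear dual $\BV^*$ is canonically isomorphic to $\B(V^*)$ as graded braided Hopf algebras in $\YD$. This follows from the description of $\BV_n$ as the image of the braided quantum symmetrizer $S_n\colon V^{\otimes n}\to V^{\otimes n}$, since the symmetrizer associated with the dual braiding on $V^*$ is the transpose of $S_n$; the argument is valid in arbitrary characteristic. Finite-dimensionality of $\BV$ upgrades the graded duality to a genuine $\k$-linear duality, giving in particular $(\B(V^*)^+)^*\cong \BV^+$ and hence $\bigl((\B(V^*)^+)^{\otimes n}\bigr)^*\cong (\BV^+)^{\otimes n}$.

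Next I would write down the normalized Hochschild bar complex computing $\HH^*(A,\k)$ for the connected graded augmented algebra $A=\B(V^*)$ with trivial bimodule coefficients, namely the cochain complex $C^n=\Hom_\k\bigl((A^+)^{\otimes n},\k\bigr)$ whose differential is the alternating sum of internal multiplications (the two outer boundary terms $\varepsilon(a_0)f(\cdots)$ and $(-1)^{n+1}f(\cdots)\varepsilon(a_{n+1})$ vanish automatically on $A^+$). Applying the duality of the previous step gives $C^n\cong (\BV^+)^{\otimes n}$, and under this identification the multiplication on $A$ dualizes to the comultiplication on $\BV$. The key computational identity is $\phi(a_ia_{i+1})=\overline{\Delta}(\phi)(a_i\otimes a_{i+1})$ for $\phi\in \BV^+$ and $a_i,a_{i+1}\in A^+$: the trivial-side terms $\phi\otimes 1$ and $1\otimes \phi$ in $\Delta_{\BV}(\phi)$ are killed by $\varepsilon(a_i)=\varepsilon(a_{i+1})=0$. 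After a routine sign check this shows that the dualized bar differential coincides with the cobar differential $\partial^n$ displayed in the excerpt, so the normalized bar complex computing $\HH^*(\B(V^*),\k)$ is isomorphic as a cochain complex of $\k$-vector spaces to $\Omega\BV$. Passing to cohomology yields $\HL^i(\BV,\k)=\HH^i(\B(V^*),\k)$.

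The main obstacle I anticipate is justifying the Nichols-algebra duality $\BV^*\cong \B(V^*)$ in positive characteristic, since standard presentations in characteristic zero often rely on complete reducibility. The cleanest approach is to realize each graded piece $\BV_n$ as the image of the braided symmetrizer built from the braiding $c$ of $V$; the braiding of $V^*$ is the transpose braiding, so the degreewise duality $\BV_n^*\cong \B(V^*)_n$ is formal, and the assembled map is automatically compatible with the Hopf-algebra-in-$\YD$ structure. A secondary bookkeeping issue is matching the sign conventions between the Hochschild bar differential (whose outer terms are $\varepsilon$-killed on $A^+$) and the cobar differential $\partial^n$ (which uses $\overline{\Delta}$ rather than $\Delta$), but this is a straightforward reindexing.
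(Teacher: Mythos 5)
Your proof is correct and follows essentially the same route as the paper: the paper simply cites \cite[3.2.30]{AG99} for the Nichols-algebra duality $\BV^{*\mathrm{bop}}\cong\B(V^*)$ and \cite[Proposition 1.4]{SteVan} for the identification of cobar cohomology of a finite-dimensional coalgebra with Hochschild cohomology of its dual algebra, whereas you reprove both ingredients by hand (the duality via the quantum symmetrizer, the second via an explicit chain-level dualization of the normalized bar complex). The only point worth flagging is that the dual Nichols algebra is really the braided \emph{opposite} $\BV^{*\mathrm{bop}}$ rather than $\BV^*$ on the nose; this is harmless here since $S^2=\id$ in $D(G)$ and Hochschild cohomology with trivial coefficients does not distinguish an algebra from its (braided) opposite, but your chain-level identification should acknowledge it.
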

\begin{proof}
Note that $\!^*V=V^*$ since $S^2=\id$ in $D(G)$. According to \cite[3.2.30]{AG99}, one sees that $\BV^{*bop}\cong \B(V^*)$ as connected graded braided Hopf algebras in $\YD$. Hence, by \cite[Proposition 1.4]{SteVan}, we have 
\[
\HL^i(\BV,\k)=\HH^i(\BV^*,\k)=\HH^i(\BV^{*bop},\k)=\HH^i(\B(V^*),\k).
\] 
\vspace{-0.5em}
\end{proof}

It is important to point out that there is a bigrading structure on the Hochschild cohomology of $\BV$, that is, the homological grading and the Adams grading induced from the grading of $\BV$. From now on, we write 
\[
\HL^i(\BV,\k)=\bigoplus_{j\ge 0} \HL^{i,j}(\BV,\k),
\]
where $\HL^{i,j}(\BV,\k)$ is the direct summand of $\HL^i(\BV,\k)$ in $\YD$ consisting of homogeneous elements whose Adams grading is $j$. Moreover, we have

\begin{lem}
The Hochschild cohomology ring $\bigoplus_{i,j\ge 0} \HL^{i,j}(\BV,\k)$ is a bigraded braided algebra in $\YD$. 
\end{lem}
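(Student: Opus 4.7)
The plan is to lift the claim to the cobar complex itself: I will verify that $\Omega\BV = T(\BV^+)$ is a differential bigraded algebra in $\YD$, and then observe that both the algebra structure and the Yetter-Drinfeld structure descend to cohomology.

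First I would equip $\Omega\BV$ with the structure of a bigraded algebra in $\YD$. Since $\BV$ is a braided Hopf algebra in $\YD$, its augmentation ideal $\BV^+$ is a Yetter-Drinfeld submodule, and each tensor power $(\BV^+)^{\otimes n}$ inherits the diagonal $\k G$-action and coaction. The cobar product is concatenation $(\BV^+)^{\otimes n} \otimes (\BV^+)^{\otimes m} \xrightarrow{\sim} (\BV^+)^{\otimes (n+m)}$, which is a YD-isomorphism by the associativity constraint in $\YD$. Thus $\Omega\BV$ is a graded monoid in $\YD$ with two compatible gradings: the cohomological grading by tensor length, and the Adams grading pulled back from the internal grading on $\BV$.

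Next I would verify that the cobar differential $\partial$ is both a morphism in $\YD$ and a bigraded derivation. Because $\Delta \colon \BV \to \BV \otimes \BV$ is a morphism in $\YD$, so is $\overline\Delta \colon \BV^+ \to \BV^+ \otimes \BV^+$, and hence each summand $1^{\otimes i} \otimes \overline\Delta \otimes 1^{\otimes(n-i-1)}$ of $\partial^n$, along with their alternating sum, is a YD-morphism. A direct computation splitting the index range into $0 \le i \le m-1$ and $m \le i \le m+n-1$ gives the graded Leibniz rule
$$\partial^{m+n}(u \otimes v) = \partial^m(u) \otimes v + (-1)^m \, u \otimes \partial^n(v)$$
for $u \in (\BV^+)^{\otimes m}$ and $v \in (\BV^+)^{\otimes n}$, so $\partial$ is a graded derivation of cohomological degree $+1$, and $\partial \circ \partial = 0$ follows from the coassociativity of $\Delta$. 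Since $\overline\Delta$ is homogeneous with respect to the Adams grading (as $\BV$ is a graded bialgebra), $\partial$ preserves Adams degree as well.

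With these structures in place, $\Ker \partial^i$ and $\Img \partial^{i-1}$ are YD-submodules of $(\BV^+)^{\otimes i}$, so each $\HL^{i,j}(\BV,\k)$ inherits a Yetter-Drinfeld module structure. The derivation property ensures that concatenation descends to a well-defined product on $\bigoplus_{i,j \ge 0} \HL^{i,j}(\BV,\k)$, and since concatenation adds both cohomological and Adams degrees, the result is a bigraded algebra in $\YD$, as claimed. The main technical check is the graded Leibniz rule with the exact signs of the cobar formula; no braiding enters the multiplication because the product on $\Omega\BV$ is concatenation, so no further compatibility is needed for the braided category structure.
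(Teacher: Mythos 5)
Your proposal is correct and follows essentially the same route as the paper: both arguments realize the cohomology ring on the cobar construction $\Omega\BV=T(\BV^+)$, note that the tensor degree and the internal degree of $\BV$ make it a bigraded braided algebra in $\YD$, observe that the differentials are $\YD$-morphisms preserving the bigrading because they are built from the comultiplication, and pass to cohomology. Your explicit verification of the graded Leibniz rule is a welcome added detail that the paper leaves implicit.
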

\begin{proof}
The braided algebra structure can be computed from the cobar construction $\Omega \BV=T(\BV^+)$. One sees that $T(\BV^+)$ is a bigraded braided algebra in $\YD$ with the tensor degree (inducing the homological grading) and the inner degree of $\BV$ (inducing the Adams grading). One can check that all the differentials are module maps in $\YD$ since they are constructed from the comultiplication of $\BV$. Moreover, the differentials also preserve the bigrading of $\Omega \BV$. Hence, the result follows after taking cohomology.  
\end{proof}

\begin{thm}
\label{ExtNichols}
Let $R=\bigoplus_{i\ge 0} R_i$ be a connected, coradically graded, and braided Hopf algebra in $\YD$. Suppose $R$ is not primitively generated. Then the differential $\partial^1$ induces an injective Yetter-Drinfeld module map 
\[
\xymatrix{
\partial^1: R_d/\B(R_1)_d \ \ar@{^{(}->}[r] &  \HL^{2,d}(\B(R_1),\k),
}
\] 
where $d\ge 2$ is the smallest integer such that $\B(R_1)_d\subsetneq R_d$. 
\end{thm}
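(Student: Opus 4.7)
My plan is to unwind the cobar construction and use the minimality of $d$ in two places: once to force $\partial^1(r)$ into the smaller complex $\Omega \B(V)$, and once to recover elements of $\Prim(R)$ from vanishing cohomology classes.

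Write $V = R_1$. By the choice of $d \geq 2$, one has $R_i = \B(V)_i$ for every $i < d$. For $r \in R_d$ the element $\partial^1(r) = \Delta_R(r) - r \otimes 1 - 1 \otimes r$ decomposes, by the gradedness of $\Delta_R$, as a sum of terms in $R_i \otimes R_{d-i}$ with $1 \leq i \leq d-1$, and by the minimality above each tensor factor already lies in $\B(V)$. Thus $\partial^1$ restricts to a $\k$-linear map $R_d \to (\B(V)^+ \otimes \B(V)^+)_d$. Since $\B(V) \subseteq R$ is a graded sub-Hopf algebra, $\partial^2_R$ agrees with $\partial^2_{\B(V)}$ on $\B(V)^+ \otimes \B(V)^+$, so the identity $\partial^2_R \partial^1_R = 0$ forces $\partial^2_{\B(V)}(\partial^1(r)) = 0$. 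Composing with the projection to cohomology produces a map $R_d \to \HL^{2,d}(\B(V), \k)$ which kills $\B(V)_d$ (for $y \in \B(V)_d$, $\partial^1(y) = \partial^1_{\B(V)}(y)$ is tautologically a coboundary), hence descends to the desired map $\bar{\partial^1}: R_d/\B(V)_d \to \HL^{2,d}(\B(V), \k)$.

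Injectivity follows quickly: if $\bar{\partial^1}(r) = 0$ then $\partial^1_R(r) = \partial^1_R(y)$ for some $y \in \B(V)_d$, so $r - y \in \Ker \partial^1_R = \Prim(R)$. Because $R$ is connected and coradically graded, $\Prim(R) = R_1$, and $d \geq 2$ then forces $r = y \in \B(V)_d$. Finally, $\bar{\partial^1}$ is a $\YD$-morphism: the cobar differentials are $\YD$-maps and the pieces $\HL^{i,j}(\B(V), \k)$ carry a $\YD$-structure by the preceding lemma, while $\B(V)_d \subseteq R_d$ is a $\YD$-submodule so that the quotient inherits a canonical $\YD$-structure. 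The substantive input is concentrated in the opening reduction, where the minimality of $d$ together with the sub-Hopf algebra embedding $\B(V) \subseteq R$ traps $\partial^1(r)$ inside $\Omega \B(V)$; every other step is the standard yoga of coradically graded coalgebras and the identification of their primitives with the degree-one part.
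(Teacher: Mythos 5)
Your proposal is correct and follows essentially the same route as the paper: minimality of $d$ traps $\partial^1(r)$ in $\B(R_1)^+\otimes\B(R_1)^+$, coassociativity (which is exactly $\partial^2\partial^1=0$) gives the cocycle condition, and injectivity comes from identifying $\Ker\partial^1$ with the primitives $R_1$ of the coradically graded $R$. The only cosmetic difference is that the paper verifies the cocycle identity by an explicit coassociativity computation rather than invoking $\partial^2_R\partial^1_R=0$ abstractly.
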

\begin{proof}
By the definition of Nichols algebra, $\B(R_1)$ is a graded Hopf subalgebra of $R$. Moreover, since $R$ is coradically graded, $\B(R_1)$ contains all primitive elements $R_1$ of $R$. Note that the integer $d\ge 2$ is well-defined for $R$ is not primitively generated. 

First, we show that $\partial^1$ is well-defined on $R_d/\B(R_1)_d$. Choose any $z\in R_d$. Without loss of generality, we can assume $z\in R_{\ge 1}$ and write 
$$\Delta(z) = z \otimes 1 + 1 \otimes z + \omega,$$
where $\omega \in \bigoplus_{j=1}^{d-1} R_{j} \otimes R_{d-j}$ since $R$ is a graded coalgebra. By the choice of $d$, $R_{j} \otimes R_{d-j} \subseteq \B(R_1) \otimes \B(R_1)$ for all $1\le j\le d-1$. Therefore, $\omega \in \B(R_1) \otimes \B(R_1)$. Moreover, the coalgebra structure of $R$ is coassociative, so $(\Delta \otimes \id) \Delta = (\id \otimes \Delta) \Delta$ on $z$. Thus, we have:
\begin{align*}
(\Delta \otimes \id)(z \otimes 1 + 1 \otimes z + \omega) &= \Delta(z) \otimes 1 + 1 \otimes 1 \otimes z + (\Delta \otimes \id)(\omega) \\
&= (z \otimes 1 + 1 \otimes z + \omega) \otimes 1 +  1 \otimes 1 \otimes z + (\Delta \otimes \id)(\omega); 
\end{align*}
\begin{align*}
(\id \otimes \Delta)(z \otimes 1 + 1 \otimes z + \omega) &= z \otimes 1 \otimes 1 + 1 \otimes \Delta(z) + (\id \otimes \Delta)(\omega) \\
&= z \otimes 1 \otimes 1 + 1 \otimes (z \otimes 1 + 1 \otimes z + \omega) +  (\id \otimes \Delta)(\omega),
\end{align*}
which implies $1 \otimes \omega - (\Delta \otimes \id)(\omega) + (\id \otimes \Delta)(\omega) - \omega \otimes 1 = 0$, equivalent to saying $\partial^1(z)=\omega$ is a 2-cocycle in $\Omega \BV$. By the construction of $\partial^1$, we can conclude that $\partial^1:z\mapsto \omega$ induces a map from $R_d$ to $\HL^{2,d}(\B(R_1),\k)$, which is also a Yetter-Drinfeld module map and preserves the Adams grading of $\BV$. Furthermore, the map $\partial^1$ factors through the Yetter-Drinfeld submodule $B(R_1)_d$ since $\partial^2\partial^1=0$ in $\Omega \B(R_1)$. 

Next we show that $\partial^1$ is injective after the factorization. Suppose $\partial^1(z)=\omega=\partial^1(x)$, for some $x \in \B(R_1)^+$. Then we have $\Delta(z) - z \otimes 1 - 1 \otimes z = \omega = \Delta(x) - x \otimes 1 - 1 \otimes x$. Hence $\Delta(z-x) = (z-x) \otimes 1 + 1 \otimes (z-x)$, so $(z-x)$ is a primitive element in $R$ and it belongs to $R_1\subset \B(R_1)^+$. So $z=x+(z-x) \in \B(R_1)^+$, which yields that $z\in \B(R_1)\cap R_d=\B(R_1)_d$. This proves the theorem.
\end{proof}

Therefore, the second Hochschild cohomology group $\HL^2(\B(R_1),\k)$ controls the non-primitive generators, if any, of a connected coradically graded braided Hopf algebra $R$ in $\YD$. We obtain the following result.

\begin{cor}
Let $R$ be as stated as in \Cref{ExtNichols}. If $\HL^2(\B(R_1),\k)=0$, then $\B(R_1)=R$. As a consequence, if $R$ is not primitively generated, then $\HL^2(\B(R_1),\k)$ does not vanish.   
\end{cor}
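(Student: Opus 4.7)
The plan is to deduce both statements of the corollary directly from Theorem \ref{ExtNichols} by contraposition, so the real work has already been done. The key observation to unlock this is that "$R$ is primitively generated" is the same as "$R=\B(R_1)$": since $R_0=\k$, the subalgebra generated by $R_0\oplus R_1$ coincides with the one generated by $R_1$, which is by definition $\B(R_1)$.

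For the first statement, I would argue by contraposition: assume $\B(R_1)\neq R$ and show $\HL^2(\B(R_1),\k)\neq 0$. Because $R$ is coradically graded and $\B(R_1)$ is a graded Hopf subalgebra of $R$ with $\B(R_1)_0=R_0=\k$ and $\B(R_1)_1=R_1$, the inequality $\B(R_1)\neq R$ produces a smallest integer $d\geq 2$ with $\B(R_1)_d\subsetneq R_d$. At this point $R$ satisfies the hypothesis "not primitively generated" of Theorem \ref{ExtNichols}, and moreover the $d$ appearing in that theorem is exactly the $d$ just defined. Applying the theorem yields an injective Yetter-Drinfeld module map
\[
\partial^1:R_d/\B(R_1)_d\ \hookrightarrow\ \HL^{2,d}(\B(R_1),\k).
\]
By the minimality of $d$ the domain is nonzero, so the codomain, and hence $\HL^2(\B(R_1),\k)$, is nonzero. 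This contradicts the standing hypothesis $\HL^2(\B(R_1),\k)=0$ and forces $\B(R_1)=R$.

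The "consequence" is then immediate from the identification above: if $R$ is not primitively generated, then $R\neq \B(R_1)$, and the contrapositive of the first statement gives $\HL^2(\B(R_1),\k)\neq 0$. There is no substantive obstacle; the only point requiring care is verifying that the integer $d$ arising from $\B(R_1)\subsetneq R$ genuinely matches the one used in Theorem \ref{ExtNichols}, which is transparent from its construction.
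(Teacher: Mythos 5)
Your argument is correct and is exactly the intended one: the paper offers no separate proof because the corollary is an immediate contrapositive of Theorem \ref{ExtNichols}, using the identification (valid since $R_0=\k$ and $\B(R_1)$ is by definition the subalgebra of $R$ generated by $R_1$) that $R$ is primitively generated if and only if $R=\B(R_1)$. Your care in matching the integer $d$ with the one in the theorem is appropriate but, as you note, transparent from the construction.
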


Since $\HL^n(\Omega \B(V))$ is a Yetter-Drinfeld module over $\k G$, we denote by $\HL^n(\Omega \B(V))^{co G}$ the coinvariants of $\k G$ in $\HL^n(\Omega \B(V))$, that is the trivial $G$-grading part. Note that $\bigoplus_{n\ge 0}\HL^n(\Omega \B(V))^{co G}$ is a subalgebra of $\bigoplus_{n\ge 0}\HL^n(\Omega \B(V))$.

\begin{prop}\label{Hsmash}
Suppose $\B(V)$ is finite-dimensional. Then $\HL^n(\Omega(\B(V)\#\k G))=\HL^n(\Omega \B(V))^{co G}$, for all $n\ge 0$. As a consequence, 
$$\bigoplus_{n\ge 0} \HL^n(\Omega(\B(V)\#\k G))\cong \bigoplus_{n\ge 0}\HL^n(\Omega \B(V))^{co G}$$ as algebras. 
\end{prop}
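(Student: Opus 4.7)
The plan is to exploit the bosonization structure of $\B(V)\#\k G$, which gives a Hopf algebra projection $\pi:\B(V)\#\k G\twoheadrightarrow \k G$ (with $\pi(r\#g)=\varepsilon_{\B(V)}(r)g$) and coinvariants $(\B(V)\#\k G)^{co\,\pi}=\B(V)$, together with the key fact that the dual group algebra $\k^G$ is always semisimple so that $\k G$ has trivial cobar cohomology in positive degrees. More precisely, I would first establish a Lyndon--Hochschild--Serre type spectral sequence for cobar cohomology,
\[
E_2^{p,q}=\HL^p\bigl(\k G,\ \HL^q(\B(V),\k)\bigr)\ \Longrightarrow\ \HL^{p+q}(\B(V)\#\k G,\k),
\]
in which the coefficients $\HL^q(\B(V),\k)$ inherit a $\k G$-comodule structure from the $\YD$-structure on $\B(V)$ already discussed.

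Next I would compute $\HL^p(\k G,N)$ for any $\k G$-comodule $N$. The cobar cohomology computes cotorsion, $\HL^p(\k G,N)=\text{Cotor}^{\k G}_p(\k,N)\cong \Ext^p_{\k^G}(\k,N)$, via the standard equivalence between $\k G$-comodules and $\k^G$-modules. Since $\k^G\cong \prod_{g\in G}\k$ is a semisimple commutative algebra, this $\Ext$ vanishes for $p\ge 1$ and equals $\Hom_{\k^G}(\k,N)=N^{co\,G}$ (the trivial-$G$-grading subspace) for $p=0$. Applied to the spectral sequence, only the $p=0$ column survives, giving $E_2^{0,q}=\HL^q(\B(V),\k)^{co\,G}$, and the sequence collapses at $E_2$. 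This yields $\HL^n(\Omega(\B(V)\#\k G))\cong \HL^n(\Omega\B(V))^{co\,G}$ for every $n\ge 0$.

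For the algebra statement, the spectral sequence is multiplicative with respect to the cup-product structures on cobar cohomology (which come from the tensor-algebra structure of the cobar construction). The edge isomorphism therefore identifies the two graded rings, and $\bigoplus_n\HL^n(\Omega\B(V))^{co\,G}$ is readily checked to be a subalgebra of $\bigoplus_n\HL^n(\Omega\B(V))$ since the cup product preserves the trivial $G$-grading by multiplicativity of the diagonal coaction.

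The main obstacle is setting up the Lyndon--Hochschild--Serre spectral sequence rigorously in this cobar-cohomology/bosonization setting; one must carefully verify that the $\YD$-coaction on $\B(V)$ interacts correctly with the iterated Hopf comultiplication of $\B(V)\#\k G$. A more hands-on alternative is to construct an explicit chain-level quasi-isomorphism $\Omega(\B(V)\#\k G)\simeq (\Omega\B(V))^{co\,G}$ directly, using the vector-space decomposition $\B(V)\#\k G\cong \B(V)\otimes \k G$ and a filtration by the augmentation ideal of $\k G$; the associated graded then splits (up to the $\YD$-twist) as $(\Omega\B(V))^{co\,G}\otimes \Omega(\k G)$, and the acyclicity of $\Omega(\k G)$ in positive degree gives the result.
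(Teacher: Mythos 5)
Your plan is correct in substance, and at its core it is the same mechanism the paper uses: a smash-product spectral sequence whose base is governed by $\k^G$, which is semisimple in every characteristic, so the sequence collapses onto the (co)invariants column. The difference is which side of the duality you work on, and this is exactly where your self-identified ``main obstacle'' lives. You propose to build a Lyndon--Hochschild--Serre spectral sequence directly for cobar cohomology of the bosonization, with $E_2^{p,q}=\HL^p(\k G,\HL^q(\B(V),\k))$; you never actually construct it, and doing so from scratch is nontrivial. The paper sidesteps this entirely by dualizing first: using Lemma~\ref{Cobar} (i.e.\ $\HL^i(\Omega A)\cong\HH^i(A^*,\k)$ via $(\B(V)\#\k G)^*\cong\B(V)^*\#(\k G)^*$), it transports the whole problem to Hochschild cohomology of a smash product over the semisimple algebra $(\k G)^*$, where the required spectral sequence $\HH^p((\k G)^*,\HH^q(\B(V)^*,\k))\Rightarrow\HH^{p+q}(\B(V)^*\#(\k G)^*,\k)$ is already in the literature (\c{S}tefan's spectral sequence for Hopf Galois extensions, \cite{Sten}). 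After collapse one translates invariants under $(\k G)^*$ back into the trivial-$G$-grading coinvariants, which is your $N^{co\,G}$. So I would recommend closing your gap the same way: rather than proving a new cotorsion spectral sequence or carrying out the filtration argument you sketch at the end, dualize and quote the known module-side result; at that point your proof and the paper's coincide, including the multiplicativity statement, since \c{S}tefan's spectral sequence is one of algebras. Your identification $\HL^p(\k G,N)\cong\Ext^p_{\k^G}(\k,N)$ and the observation that the finite-dimensionality hypothesis is what makes the dualization legitimate are both correct and worth keeping.
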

\begin{proof}
By \cite[\S 2.2]{AG99}, we know $B(V)^*$ is a braided Hopf algebra over $(\k G)^*$, and $(\B(V)\#\k G)^*\cong \B(V)^*\# (\k G)^*$ as Hopf algebras. There is a well-known spectral sequence of algebras, for example see reference \cite{Sten},
\[
\HH^p((\k G)^*, \HH^q(\B(V)^*,\k))\Longrightarrow \HH^{p+q}(\B(V)^*\# (\k G)^*, \k).
\]
Since $(\k G)^*$ is semisimple, we have 
\[
\HH^q(\B(V)^*,\k)^{(\k G)^*}\cong \HH^{q}(\B(V)^*\# (\k G)^*, \k)
\]
as algebras. Therefore, by the same argument in Lemma \ref{Cobar}, we get
\begin{align*}
\HL^n(\Omega (\B(V)\# \k G))&\, \cong \HH^n((\B(V)\# \k G)^*, \k)\\
&\, \cong \HH^n(\B(V)^*\# (\k G)^*, \k)\\
&\, \cong \HH^n(\B(V)^*,\k)^{(\k G)^*}\\
&\, \cong \HL^n(\Omega(\B(V))^{co G}.
\vspace{-0.5em}
\end{align*}
\end{proof}

%%%%%%%%%%%%%%%%%%%%%%%%%%%%%%%%

\section{Nichols algebras of infinitesimal braidings of rank two}
\label{sec:BVrank2}

%%%%%%%%%%%%%%%%%%%%%%%%%%%%%%%%

\subsection{Indecomposable $2$-dimensional $\YD$ modules}

In this section, we work over an algebraically closed field $\k$ of any characteristic. Let $G$ be a finite group.

There are two different ways to construct a two-dimensional Yetter-Drinfeld module $V$ over $G$. One is to choose a pair $(\varphi,g)\in \text{rep}_2(G)\times Z(G)$. Then $V$ is constructed as a two-dimensional Yetter-Drinfeld module over $G$, where the $G$-action on $V$ is given by $\varphi: G\to GL_2(V)$ and $V$ has constant $G$-grading given by $g$. On the other hand, let $g\in G$ such that the centralizer $C(g)$ is of index two in $G$. Then for any character $\chi: C(g)\to \k$, take $V=\k G\otimes_{C(g)}\chi$, where the $G$-action is the induced action on $\chi$ and the $G$-coaction on $x\otimes \chi$, for any $x\in G$, is given by $xgx^{-1}$. In both constructions, we denote by $V=(\varphi,g)\in\text{rep}_2(G)\times Z(G)$ or $V=(\chi,g)\in \widehat{C(g)} \times G$. 

\begin{lem}
\label{L:Indecomposable}
Suppose $V \in \YD$ is indecomposable of dimension $2$, then $V$ is isomorphic to one of the following:
\begin{enumerate}
\item  $V=(\varphi,g)\in\text{rep}_2(G)\times Z(G)$, where $\varphi$ is a two-dimensional indecomposable representation of $\k G$.
\item  $V=(\chi,g)\in \widehat{C(g)} \times G$, where $C(g)$ has index two in $G$.
\end{enumerate}
\end{lem}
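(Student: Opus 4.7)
The plan is to reduce the classification to the Morita-equivalent description of $\YD$ provided by Corollary~\ref{C:SImodule}. Since $\YD$ is equivalent to $D(G)$-Mod, and every indecomposable $D(G)$-module arises (via the explicit bijection recorded immediately after that corollary) as an induced module $\k G \otimes_{C(g_i)} M$ for some conjugacy-class representative $g_i \in \O_i$ and some indecomposable $\k C(g_i)$-module $M$, any indecomposable $V \in \YD$ of dimension two must come from such a pair $(g_i, M)$.

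The next step is a dimension count. Since $\k G$ is free over $\k C(g_i)$ of rank $[G : C(g_i)]$, one has
\[
\dim V \;=\; [G : C(g_i)] \cdot \dim M \;=\; 2,
\]
which leaves exactly two possibilities. If $[G : C(g_i)] = 1$, then $g_i \in Z(G)$ and $M$ is a two-dimensional indecomposable $\k G$-module, i.e.\ a representation $\varphi \in \text{rep}_2(G)$; the coaction formula $\rho(x \otimes m) = xg_ix^{-1} \otimes (x \otimes m)$ then collapses to the constant $G$-grading $g_i$ (because $g_i$ is central) and the induced $G$-action is simply $\varphi$, yielding case (1). If $[G : C(g_i)] = 2$, then $M$ is forced to be one-dimensional, hence a character $\chi \in \widehat{C(g_i)}$, and $V = \k G \otimes_{C(g_i)} \chi$ equipped with the induced action and the coaction formula above recovers case (2).

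Finally, I would note that the two cases are disjoint and exhaustive: they are distinguished by whether the $G$-grading of $V$ is concentrated at a single central element or not, and the dimension equation above exhausts all sub-cases. The only genuine bookkeeping obstacle — and it is a mild one — is checking that the YD structure on the induced module $\k G \otimes_{C(g_i)} M$ really matches the descriptions $(\varphi,g) \in \text{rep}_2(G) \times Z(G)$ and $(\chi,g) \in \widehat{C(g)} \times G$ respectively; this is immediate from the explicit formulas for the induced $G$-action and $G$-coaction spelled out in the paragraph preceding Remark~\ref{simple1dim}. Since Corollary~\ref{C:SImodule} already supplies both the bijection and the explicit module structure, no further structural input is required and the proof reduces essentially to the dimension dichotomy.
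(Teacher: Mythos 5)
Your proposal is correct and follows essentially the same route as the paper: both reduce to the description of indecomposable objects of $\YD$ as induced modules $\k G\otimes_{C(g)}M$ via Corollary~\ref{C:SImodule}, and then split into the two cases $\dim M=2$ (so $C(g)=G$ and $g\in Z(G)$) or $\dim M=1$ with $|G:C(g)|=2$ by the dimension count $\dim V=[G:C(g)]\cdot\dim M$. The extra remarks on matching the YD structures and on disjointness are harmless elaborations of what the paper leaves implicit.
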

\begin{proof}
By Corollary \ref{C:SImodule} and the discussion below it, any indecomposable Yetter-Drinfeld module over $G$ is given in terms of $V=\k G\otimes_{C(g)}M$, where $M$ is a indecomposable module over $C(g)$. Suppose $\dim V=2$. Then either $\dim M=2$, in this case $C(g)=G$ so $g$ is central; or $\dim M=1$ and $|G:C(g)|=2$. The result follows.  
\end{proof}

In the following, we denote by $\mathcal M_G^2$ the isomorphism classes of all two-dimensional indecomposable modules over $\k G$. Note that when $\k=\overline{\k}$ and $\text{char}\, \k\nmid |G|$, $\mathcal M_G^2$ contains all the $2\times 2$ matrix blocks of the semisimple algebra $\k G$. In the conjugacy class decomposition $G= \sqcup_{1\le i\le n} \O_i$, for any $g_i\in \O_i$, $\widehat{C(g_i)}$ denotes all the characters on the centralizer $C(g_i)$. Note that $\widehat{C(g_i)}$ is bijective to the set of grouplike elements in the dual Hopf algebra $\k^{C(g_i)}$. The next result is straightforward from previous argument. 

\begin{prop}
The set of isomorphism classes of two-dimensional indecomposable Yetter-Drinfeld modules over $G$ is bijective to the following set:
$$ \left( \mathcal M_G^2\times Z(G) \right) \bigsqcup \left( \sqcup_{|\O_i|=2} \widehat{C(g_i)} \right).$$
\end{prop}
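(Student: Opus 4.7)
The claim is essentially a direct corollary of \Cref{C:SImodule} combined with \Cref{L:Indecomposable}, obtained by imposing the dimension constraint on the general classification of indecomposable Yetter-Drinfeld modules. The strategy is to split into the two mutually exclusive cases dictated by the factorization $\dim_\k V = |\O_i| \cdot \dim_\k M$ for $V = \k G \otimes_{C(g_i)} M$ and to identify each piece with the corresponding summand in the displayed set.

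\medskip

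First, I would invoke \Cref{C:SImodule}(b) and the description of indecomposable objects of $\YD$ given immediately after it: every indecomposable $V \in \YD$ is of the form $V = \k G \otimes_{C(g_i)} M$, where $g_i$ is a chosen representative of some conjugacy class $\O_i \subseteq G$ and $M$ is an indecomposable $\k C(g_i)$-module. Since $\k G$ is free of rank $|G:C(g_i)| = |\O_i|$ as a right $\k C(g_i)$-module, one has $\dim_\k V = |\O_i| \cdot \dim_\k M$. Requiring $\dim_\k V = 2$ forces exactly one of the following: (I) $|\O_i| = 1$ and $\dim_\k M = 2$, i.e.\ $g_i \in Z(G)$, $C(g_i) = G$, and $M$ is a $2$-dimensional indecomposable $\k G$-module; or (II) $|\O_i| = 2$ and $\dim_\k M = 1$, i.e.\ $M$ is a character $\chi \in \widehat{C(g_i)}$. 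These two cases correspond precisely to the two options already listed in \Cref{L:Indecomposable}.

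\medskip

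Allowing $g_i$ to vary over $Z(G)$ in case (I) produces the summand $\mathcal M_G^2 \times Z(G)$, and allowing $g_i$ to vary over a fixed choice of representative of each size-$2$ conjugacy class in case (II) produces $\sqcup_{|\O_i|=2} \widehat{C(g_i)}$. The two families are disjoint: in case (I) the $G$-coaction on $V$ has image in the singleton $\{g\}$, whereas in case (II) the coaction takes values on both elements of $\O_i$, so no YD-module lies in both. That the parametrization is well-defined on isomorphism classes (independence of the chosen representative $g_i$ in each class, and the fact that isomorphic YD-modules yield equal $Z(G)$-label together with isomorphic $\k G$-module structure in case (I), resp.\ isomorphic induced characters in case (II)) is the content of the bijection already established in \Cref{C:SImodule}. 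The only real (and minor) bookkeeping obstacle is this last well-definedness check; once it is in hand the proposition is immediate.
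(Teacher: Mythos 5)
Your proposal is correct and follows essentially the same route as the paper: the paper states this proposition as ``straightforward from previous argument,'' meaning exactly the case split in \Cref{L:Indecomposable} ($|\O_i|=1$ with $\dim M=2$ versus $|\O_i|=2$ with $\dim M=1$) resting on the Morita-equivalence description of indecomposables from \Cref{C:SImodule}. Your additional remarks on disjointness of the two families and well-definedness on isomorphism classes are sound elaborations of what the paper leaves implicit.
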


%%%%%%%%%%%%%%%%%%%%%%%%%%%%%%%%

\subsection{Infinitesimal braidings of $2$-dimensional $\YD$ modules}

We discuss infinitesimal braidings of a two-dimensional Yetter-Drinfeld module $V$ over $G$. Let $\mathcal O_i$ be an orbit in the conjugacy class decomposition $G= \sqcup_{1\le i\le n} \O_i$ satisfying  $|\mathcal O_i|=2$. For any $g\in \mathcal O_i$ and any character $\chi:  C(g)\to \k$, we can construct a two-dimensional indecomposable Yetter-Drinfeld module $\k G \otimes_{C(g)} \chi$ over $G$. We denote it by $V=(\chi,g)$.

\begin{lem}
\label{L:diagonal}
Suppose $V=(\chi,g)\in \widehat{C(g)} \times G$, where $C(g)$ has index two in $G$. Then $V$ is of diagonal type with braiding matrix $(\chi(g))_{2\times 2}$.
\end{lem}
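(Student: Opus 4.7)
The plan is a direct calculation of the braiding $c = c_{V,V}$ in an explicit coset basis. Since $[G:C(g)] = 2$, the subgroup $C(g)$ is normal in $G$, so I can pick a representative $h \in G\setminus C(g)$ giving $G = C(g)\sqcup hC(g)$ with $h^2\in C(g)$. The induced module $V = \k G\otimes_{C(g)}\k_\chi$ then has basis $x_1 = 1\otimes 1$ and $x_2 = h\otimes 1$, and the stated coaction $\rho(y\otimes 1)=ygy^{-1}\otimes(y\otimes 1)$ gives $(x_1)_{(-1)} = g$ and $(x_2)_{(-1)} = hgh^{-1}$, both lying in $C(g)$ by normality.

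With the basis fixed, I would compute each of the four $c(x_i\otimes x_j)$ using $c(v\otimes w) = v_{(-1)}\cdot w \otimes v_{(0)}$, where the $G$-action is left multiplication on the $\k G$-factor, together with the defining relation $yk\otimes 1 = y\otimes \chi(k)$ for $k\in C(g)$. The $(1,1)$ entry is immediate from $g\cdot x_1 = g\otimes 1 = \chi(g)x_1$, and for the $(2,2)$ entry the simplification $(hgh^{-1})\cdot h = hg$ pushes everything back into the identity coset and produces $\chi(g)x_2$. For the off-diagonal entries, I would rewrite $gh = h\cdot(h^{-1}gh)$ for the $(1,2)$ pair (noting $h^{-1}gh\in C(g)$), and use that $hgh^{-1}\in C(g)$ directly for the $(2,1)$ pair, then pull the $C(g)$-factor across the tensor via $\chi$ in each case. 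This shows that each $c(x_i\otimes x_j)$ is a scalar multiple of $x_j\otimes x_i$, so $V$ is of diagonal type in the sense of \S\ref{subsec:Nichols}.

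The main obstacle is the last step: identifying those off-diagonal scalars as $\chi(g)$ so that the braiding matrix is the constant matrix $(\chi(g))_{2\times 2}$. The raw calculation produces $\chi(h^{-1}gh)$ and $\chi(hgh^{-1})$; the relation $h^2\in C(g)$ immediately gives $h^{-1}gh = hgh^{-1}$, so the two scalars agree, but collapsing them further to $\chi(g)$ amounts to showing $\chi([h,g]) = 1$, i.e., that $\chi$ is invariant on $g$ under conjugation by the nontrivial coset representative $h$. This invariance is where the real content of the lemma lives and is the point I would need to argue most carefully; everything preceding it is routine coset bookkeeping on the induced module.
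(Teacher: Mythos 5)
Your setup and computation track the paper's own proof almost line for line: the paper likewise fixes $s\in G\setminus C(g)$, takes the basis $x_1=1_G\otimes\chi$, $x_2=s\otimes\chi$, reads off the coaction as $g\otimes x_1$ and $sgs^{-1}\otimes x_2$, and evaluates the action by pushing group elements across the tensor sign via $\chi$. Both arguments establish that $V$ is of diagonal type, and your diagonal entries $\chi(g)$ agree with the paper's.

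The step you flag as the real content --- collapsing the off-diagonal scalars $\chi(s^{-1}gs)=\chi(sgs^{-1})$ to $\chi(g)$ --- is exactly the step the paper does not actually prove: it simply asserts ``$\chi(h)=\chi(h')$ where $hs=sh'$'', i.e.\ that $\chi$ is invariant under conjugation by $s$, with no justification. You are right to be suspicious, because for an arbitrary character of $C(g)$ this is false: take $G=S_3$ over $\mathbb{C}$, $g$ a $3$-cycle, so that $C(g)=A_3$ has index two, and let $\chi$ be a nontrivial character of $A_3$; then $sgs^{-1}=g^{-1}$ and $\chi(sgs^{-1})=\chi(g)^{-1}\neq\chi(g)$. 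The honest output of the computation is that $V$ is of diagonal type with braiding matrix having diagonal entries $\chi(g)$ and off-diagonal entries $\chi(sgs^{-1})$; the matrix is the constant one claimed in the lemma precisely when $\chi(sgs^{-1}g^{-1})=1$. This imprecision is harmless downstream: the lemma is only invoked through Proposition~\ref{prop:InfiniBraiding}(3), which needs only ``diagonal type'' (and that part your argument fully establishes), and through Lemma~\ref{L:PositiveYDRank2}(3), where $G$ is a finite $p$-group in characteristic $p$ and hence $\chi=\varepsilon$ is trivial, so every entry is $1$. In short, your proof is the paper's proof; the one step you could not close cannot be closed in the stated generality, and the fix is either to add the hypothesis that $\chi$ is invariant under conjugation by $s$ (automatic in the intended application) or to weaken the conclusion to the non-constant matrix above.
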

\begin{proof}
Since $|G:C(g)|=2$, there exists an element $s\in G\setminus C(g)$ such that $G=C(g)\sqcup C(g)s=C(g)\sqcup sC(g)$. Therefore $V=\k G\otimes_{C(g)}\chi$ has a basis $x_1=1_G\otimes \chi$ and $x_2=s\otimes \chi$. Regarding the $G$-coaction on $V$, one sees that
\[
\rho(x_1)=1_Gg1_G^{-1}\otimes (1_G\otimes \chi)=g\otimes x_1,\quad
\rho(x_2)=sgs^{-1}\otimes (s \otimes \chi)=sgs^{-1}\otimes x_2.
\]
The $G$-action on $x_1$ is given by
\begin{align*}
h\cdot x_1&\, =h\cdot (1_G\otimes \chi)=1_G\otimes h\cdot \chi=\chi(h)(1_G\otimes \chi)=\chi(h)x_1,\ \text{for any}\ h\in C(g);\ \text{and}\\
s\cdot x_1&\, =s\cdot(1_G\otimes \chi)=s\otimes \chi=x_2.
\end{align*}
And $G$ acts on $x_2$ via 
\[
h\cdot x_2=h\cdot (s\otimes \chi)=s\otimes h'\cdot \chi=\chi(h')(s\otimes \chi)=\chi(h')x_2=\chi(h)x_2,\ \text{for any}\ h\in C(g),
\]
where $hs=sh'$ for some $h'\in G$ and $\chi(h)=\chi(h')$; and 
\[
s\cdot x_2=s\cdot (s\otimes \chi)=s^2\cdot (1_G\otimes \chi)=1_G\otimes s^2\cdot \chi=\chi(s^2)(1_G\otimes \chi)=\chi(s^2)x_1,
\]
since $s^2\in C(g)$. In particular, we have 
\[
sgs^{-1}\cdot x_1=\chi(hgh^{-1})x_1=\chi(g)x_1,
\]
since $sgs^{-1}\in C(g)$; and
\[
sgs^{-1}\cdot x_2=sgs^{-1}\cdot (s \otimes \chi)= sg \cdot(1_G \otimes \chi)=sg\cdot x_1=\chi(g)s\cdot x_1=\chi(g)x_2.
\]
Hence, we see that $V$ is of diagonal type and the braiding matrix is $\q=(\chi(g))_{2\times 2}$.
\end{proof}

\begin{prop}
\label{prop:InfiniBraiding}
Let $V$ be a two-dimensional Yetter-Drinfeld module over $G$. Then the infinitesimal braiding of $V$ is either of diagonal type or of Jordan type. Moreover, we have 
\begin{enumerate}
\item If $V$ is decomposable, then $V$ is of diagonal type.
\item If $V=(\varphi,g)\in\text{rep}_2(G)\times Z(G)$, then $V$ is of diagonal type provided $\varphi(g)$ is diagonalizable on $V$, otherwise $V$ is of Jordan type. 
\item If $V=(\chi,g)\in \widehat{C(g)} \times G$, then $V$ is of diagonal type.
\end{enumerate} 
\end{prop}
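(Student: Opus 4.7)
The plan is to treat the three cases separately, using the key observation that when $V \in \YD$ has constant $G$-grading $g$, the braiding simplifies to $c(x\otimes y) = g \cdot y \otimes x$ for all $x,y\in V$; this reduces the problem to understanding how the single endomorphism $\varphi(g) \in \text{End}(V)$ acts.

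For case (1), suppose $V = V' \oplus V''$ as Yetter-Drinfeld modules over $G$. Then after passing to the restriction over the subgroup $G_V$ generated by the support of $V$, each summand is a one-dimensional Yetter-Drinfeld module over $G_V$, since any nonzero YD submodule has support contained in $\text{supp}(V)$. An application of \Cref{prop:EquivConDiagonalJordan}(1) then yields that $V$ is of diagonal type.

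For case (2), $V = (\varphi, g)$ has constant $G$-grading $g \in Z(G)$ and action given by $\varphi$, so $c(x\otimes y) = \varphi(g)(y) \otimes x$ for all $x,y\in V$. I would apply Jordan normal form to the operator $\varphi(g)$ on the two-dimensional space $V$. If $\varphi(g)$ is diagonalizable, choose an eigenbasis $x_1, x_2$ with eigenvalues $t_1, t_2 \in \k^\times$ (the eigenvalues are nonzero because $\varphi(g)$ is invertible). Then a direct computation gives $c(x_i \otimes x_j) = t_j\, x_j \otimes x_i$, which fits the diagonal type definition with braiding matrix $q_{ij} = t_j$. If $\varphi(g)$ is not diagonalizable, then since $\dim V = 2$ it consists of a single Jordan block: there is a basis $x_1, x_2$ and a scalar $t \in \k^\times$ with $\varphi(g)x_1 = tx_1$ and $\varphi(g)x_2 = tx_2 + x_1$. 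Plugging into $c(x\otimes y) = \varphi(g)(y) \otimes x$ then matches the Jordan type $\J(t,2)$ formulas exactly. Alternatively, the minimal polynomial for $\varphi(g)$ is $(T - t)^2$, so the second criterion of \Cref{prop:EquivConDiagonalJordan}(2) applies directly.

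Finally, case (3) is exactly the content of \Cref{L:diagonal}, so no additional argument is required. The remaining assertion that every infinitesimal braiding of a two-dimensional $V$ is either of diagonal or Jordan type then follows by combining the three cases with \Cref{L:Indecomposable}: either $V$ decomposes (case 1) or $V$ is indecomposable, in which case it is of one of the two forms in cases (2) and (3). I do not anticipate a serious obstacle; the main point is simply bookkeeping around Jordan normal form in case (2) and ensuring the nonzero grading/eigenvalue hypotheses needed to identify the braiding with the named types $\D(2)$ and $\J(t,2)$.
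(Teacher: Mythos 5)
Your proposal is correct and follows essentially the same route as the paper: reduce to the three forms via \Cref{L:Indecomposable}, handle the decomposable case and the diagonalizable subcase of (2) through \Cref{prop:EquivConDiagonalJordan} (or, equivalently, the explicit braiding computation you carry out), use the minimal polynomial $(T-t)^2$ for the non-diagonalizable subcase, and cite \Cref{L:diagonal} for case (3). The only cosmetic difference is that you write out the braiding $c(x_i\otimes x_j)=\varphi(g)(x_j)\otimes x_i$ explicitly where the paper simply invokes \Cref{prop:EquivConDiagonalJordan}.
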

\begin{proof}
(1) Suppose $V$ is decomposable. Then $V=V_1\oplus V_2$, where each $V_i$ is a one-dimensional $\YD$ module. So $V$ is of diagonal type by Proposition~\ref{prop:EquivConDiagonalJordan}.

(2) Suppose $V=(\varphi,g)\in\text{rep}_2(G)\times Z(G)$, where $\varphi: G\to \text{GL}(V)$ is a two-dimensional representation of $G$. Clearly, $V$ can be viewed as a Yetter-Drinfeld module over the subgroup $\langle g\rangle$ of $G$. We use the fact that $\dim V=2$. If $\varphi(g)$ is diagonalizable on $V$, then $V=V_1\oplus V_2$, where each $V_i$ is a one-dimensional Yetter-Drinfeld module over $\langle g\rangle$. Hence, by Proposition~\ref{prop:EquivConDiagonalJordan}, $V$ is of diagonal type.  Otherwise, the minimal polynomial of $\varphi(g)$ must be $(g-t)^2=0$ on $V$, for some $t\in \k^\times$. Then it again follows from Proposition~\ref{prop:EquivConDiagonalJordan} that $V$ is of Jordan type. 

(3) comes from Lemma \ref{L:diagonal}. 

Now according to Lemma \ref{L:Indecomposable}, we conclude that $V$ is either of diagonal type or of Jordan type.  
\end{proof}

%%%%%%%%%%%%%%%%%%%%%%%%%%%%%%%%

\subsection{Nichols algebras of infinitesimal braidings in positive characteristic}
In the remaining of our paper, let $\k$ be an algebraically closed field of characteristic $p>0$. We use the convention $(x)(\ad \ y):=[x,y]$ (resp.~$(\ad\ y)(x):=[y,x]$) for the right (resp. left) adjoint action. The following proposition is used frequently in positive characteristic. 

\begin{prop}\cite[pp.~186-187]{Jac} 
\label{palgebra}
Let $A$ be any associative $\k$-algebra over a field of characteristic $p>0$. For any $x,y\in A$, we have
\begin{equation*}
\left(x+y\right)^{p}=x^{p}+y^{p}+\sum_{i=1}^{p-1} s_i\left(x,y\right)
\end{equation*}
where $is_i(x,y)$ is the coefficient of $\lambda^{i-1}$ in $x(\ad\, (\lambda x+y))^{p-1}$, and
\begin{equation*}
[x,y^p]=(x)(\ad\ y)^p.
\end{equation*}
\end{prop}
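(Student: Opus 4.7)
The plan is to derive both identities from the commutativity of the left- and right-multiplication operators on $A$ together with the divisibility $p\mid \binom{p}{k}$ for $0<k<p$. Writing $L_y(a) = ya$ and $R_y(a) = ay$, the right-action convention $(x)(\ad y)=xy-yx$ expresses the adjoint as the operator $\ad y = R_y - L_y$, and of course $L_y R_y = R_y L_y$. This is the only mechanism needed for both halves of the proposition.

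I would start with the second identity, which is the quick one. The binomial expansion
\begin{equation*}
(\ad y)^p = (R_y - L_y)^p = \sum_{k=0}^{p}\binom{p}{k}(-1)^{p-k}R_y^{k}L_y^{p-k}
\end{equation*}
collapses in characteristic $p$ to the two extreme terms $R_y^p + (-1)^p L_y^p$, which equal $R_{y^p} - L_{y^p}$ in both odd and even characteristic (recalling $-1=1$ when $p=2$). Applying this operator to $x$ yields $xy^p - y^p x = [x,y^p]$.

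For the first identity I would work formally in $A[\lambda]$ with $\lambda$ central. Expanding the product $(\lambda x + y)^p$ into words in $\lambda x$ and $y$ defines elements $s_i(x,y) \in A$ via
\begin{equation*}
(\lambda x + y)^p = \lambda^p x^p + y^p + \sum_{i=1}^{p-1}\lambda^i s_i(x,y),
\end{equation*}
and setting $\lambda = 1$ produces the stated expansion of $(x+y)^p$. To identify the $s_i$, I would compute $\tfrac{d}{d\lambda}(\lambda x + y)^p$ in two ways. Differentiating the displayed expansion term-by-term gives $\sum_{i=1}^{p-1} i\lambda^{i-1} s_i(x,y)$, since the $p\lambda^{p-1}x^p$ contribution vanishes in characteristic $p$. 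On the other hand, the noncommutative product rule applied to $z^p$ at $z = \lambda x + y$ gives $\sum_{k=0}^{p-1} z^k x z^{p-1-k}$, and the same operator trick as in the second identity, now applied to $(R_z - L_z)^{p-1}$ together with the mod-$p$ congruence $\binom{p-1}{k}\equiv (-1)^k$, rewrites this sum as $x(\ad z)^{p-1}$. Comparing coefficients of $\lambda^{i-1}$ on the two sides then delivers the claimed description of $i\,s_i(x,y)$.

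The main nuisance rather than a genuine obstacle is the bookkeeping: keeping the right-action convention $(x)(\ad y) = (R_y - L_y)(x)$ straight, and running the sign checks in characteristic $2$ where $-1 = 1$ lets several intermediate formulas collapse in ways that look different from the odd-$p$ case. Once the two operator identities $(R_y - L_y)^p = R_{y^p} - L_{y^p}$ and $(R_z - L_z)^{p-1}(x) = \sum_{k} z^k x z^{p-1-k}$ are in hand, the rest is purely formal manipulation in $A[\lambda]$.
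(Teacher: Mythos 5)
Your argument is correct. Note that the paper offers no proof of this proposition at all: it is quoted verbatim from Jacobson's \emph{Lie Algebras} (the cited pp.~186--187), so there is nothing internal to compare against. Your derivation --- writing $\ad y = R_y - L_y$ as a difference of commuting operators so that $(R_y-L_y)^p = R_{y^p}-L_{y^p}$ in characteristic $p$, and then obtaining the $s_i$ by formally differentiating $(\lambda x+y)^p$ in $A[\lambda]$ and identifying $\sum_{k} z^k x z^{p-1-k}$ with $x(\ad z)^{p-1}$ via $\binom{p-1}{k}\equiv(-1)^k \pmod p$ --- is precisely the classical proof given in Jacobson, including the right-action bookkeeping and the $p=2$ sign collapse, so it correctly fills in the reference the authors chose to leave external.
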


Let $G$ be a finite $p$-group. We are interested in Nichols algebras $\BV$ for any two-dimensional Yetter-Drinfeld module $V$ over $\k G$. Our next result is a restatement of \Cref{prop:InfiniBraiding} in characteristic $p>0$. 
\begin{lem}
\label{L:PositiveYDRank2}
Let $G$ be a finite $p$-group and $V$ be a two-dimensional Yetter-Drinfeld module over $G$. By choosing a suitable basis $\{x_1,x_2\}$ for $V$, we have:
\begin{enumerate}
\item If $V$ is decomposable, then $V=(g_1,g_2,\varepsilon)$, where $G$ coacts on $x_i$ by some $g_i\in G$ and the $G$-action is trivial.
\item If $V=(\varphi,g)\in \text{rep}_2(G)\times Z(G)$, then either $g$ acts trivially or $g\cdot x_1=x_1$, $g\cdot x_2=x_2+x_1$. 
\item If $p=2$ and $V=(\varepsilon,g)\in \widehat{C(g)} \times G$ with $|G:C(g)|=2$, then, for some $s\in G\setminus C(g)$, $G$ coacts on $x_1$ by $g$ and coacts on $x_2$ by $sgs^{-1}$; and the $C(g)$-action is trivial and $s\cdot x_1=x_2$, $s\cdot x_2=x_1$. 
\end{enumerate}
\end{lem}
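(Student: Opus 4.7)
The plan is to invoke the classification of indecomposable two-dimensional Yetter-Drinfeld modules from Lemma \ref{L:Indecomposable} and Proposition \ref{prop:InfiniBraiding}, and then exploit the fact that $G$ is a $p$-group in characteristic $p$ in order to normalize the basis in each of the three listed cases. The two structural facts I will use repeatedly are: (i) any character $\chi \colon H \to \k^\times$ of a subgroup $H \le G$ is trivial, because a finite subgroup of $\k^\times$ cannot have order divisible by $p$ when $\mathrm{char}\, \k = p$; and (ii) any finite-dimensional representation $\varphi$ of a $p$-group over $\k$ is unipotent, so $\varphi(h)$ has $1$ as its only eigenvalue for every $h \in G$. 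Both are standard (the latter because $\k G$ is local with residue field $\k$).

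For case (1), decomposability writes $V = V_1 \oplus V_2$ as $\YD$-modules, and each $V_i$ is a one-dimensional $\YD$-module. The $G$-action on $V_i$ is then a character, which is trivial by (i), while the coaction simply picks out a single element $g_i \in G$; choosing $x_i$ to span $V_i$ yields the stated form $(g_1, g_2, \varepsilon)$. For case (2), the $G$-coaction is constant equal to $g \in Z(G)$, so \emph{any} linear change of basis is automatically $G$-colinear and we only need to normalize the $g$-action. By (ii), $\varphi(g)$ has $1$ as its unique eigenvalue, so either $\varphi(g) = \mathrm{id}$ (trivial $g$-action case) or, after passing to a Jordan basis $\{x_1, x_2\}$ for $\varphi(g)$, we obtain $g \cdot x_1 = x_1$ and $g \cdot x_2 = x_2 + x_1$ as claimed.

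Case (3) is forced into $p = 2$ because $|G : C(g)| = 2$ must be a power of $p$. The character $\chi \colon C(g) \to \k^\times$ in question is trivial by (i) applied to the $p$-subgroup $C(g)$, so $\chi = \varepsilon$. Fix any coset representative $s \in G \setminus C(g)$ and take the induced basis $x_1 = 1_G \otimes 1$, $x_2 = s \otimes 1$ of $V = \k G \otimes_{C(g)} \varepsilon$. A direct computation (essentially already performed in the proof of Lemma \ref{L:diagonal}, but now specialized to $\chi = \varepsilon$) gives the coactions $\rho(x_1) = g \otimes x_1$ and $\rho(x_2) = sgs^{-1} \otimes x_2$, the trivial $C(g)$-action on both basis vectors, and the swap $s \cdot x_1 = x_2$ together with $s \cdot x_2 = s^2 \cdot x_1 = \varepsilon(s^2)\, x_1 = x_1$, the last step using $s^2 \in C(g)$.

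The entire argument is mechanical once facts (i) and (ii) are in hand, so there is no genuine obstacle; the only place requiring a small sanity check is case (2), where one must note that putting $\varphi(g)$ into Jordan form does not disturb the Yetter-Drinfeld structure, which is immediate from the constancy of the coaction and the fact that the lemma constrains only the action of the single element $g$, not of arbitrary elements of $G$.
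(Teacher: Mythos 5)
Your proposal is correct and follows essentially the same route as the paper: triviality of characters of $p$-groups in characteristic $p$, unipotence of $\varphi(g)$ (equivalently $(g-1)^{|G|}=0$) forcing the Jordan normal form in case (2), the observation that $|G:C(g)|=2$ forces $p=2$ in case (3), and the specialization of Lemma~\ref{L:diagonal} and Proposition~\ref{prop:InfiniBraiding}. Your remark that only the $g$-action needs normalizing in case (2), since the coaction is constant, is exactly the point the paper leaves implicit.
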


\begin{proof}
Note that for $G$ (or any subgroup of G) we only have the trivial character $\varepsilon: G\to \k$. Moreover, since $G$ is a finite $p$-group and char. $\k=p>0$, we have $g^{|G|}-1=(g-1)^{|G|}=0$. Hence in the Jordan type, the minimal polynomial for $g$ is given by $(g-1)^2=1$. Also note that case (3) only happens in $p=2$ since $2\mid |G|$. Then we can apply Lemma \ref{L:diagonal} and Proposition~\ref{prop:InfiniBraiding}. 
\end{proof}

Next we focus on two-dimensional Yetter-Drinfeld modules $V$ of \textit{Jordan type}. By Lemma \ref{L:PositiveYDRank2}, there is a basis $x_1,x_2$ for $V$ and a central group element $g\in Z(G)$ such that 
\[
g\cdot x_1=x_1,\quad g\cdot x_2=x_2+x_1;
\]
and both $x_i$ have the same $G$-grading given by $g$. We denote by $\k\langle V\rangle$ the free braided Hopf algebra in $\YD$ generated by $V$, where $x_1,x_2$ are primitive elements. It is easy for one to check that $x_1^p$ is primitive in $\k\langle V\rangle$, i.e., $\Delta(x_1^p) = 1 \otimes x_1^p + x_1^p \otimes 1$.

\begin{lem}
\label{L:primitiveJordan}
Suppose $p>2$. In the braided Hopf algebra $\k\langle V\rangle$, we have $x_1x_2-x_2x_1-\frac{1}{2}x_1^2$ is primitive. Moreover when we pass to the quotient braided Hopf algebra $\k\langle V\rangle/(x_1^p,x_1x_2-x_2x_1-\frac{1}{2}x_1^2)$, $x_2^p$ is primitive. 
\end{lem}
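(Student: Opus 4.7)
The plan is to verify each claim by direct computation of the braided coproduct, exploiting that both $x_i$ have $G$-grading $g$, so the braiding reduces to $c(x_i \otimes x_j) = (g \cdot x_j) \otimes x_i$ with $g \cdot x_1 = x_1$ and $g \cdot x_2 = x_2 + x_1$. For the first assertion, I expand $\Delta(x_i x_j) = \Delta(x_i)\Delta(x_j)$ using the braided multiplication $(u \otimes v)(u' \otimes v') = u\,(v_{(-1)} \cdot u') \otimes v_{(0)} v'$. The only nontrivial cross term arises from $(1 \otimes x_1)(x_2 \otimes 1) = (g \cdot x_2) \otimes x_1 = (x_2 + x_1) \otimes x_1$, which produces an extra $x_1 \otimes x_1$ in $\Delta(x_1 x_2) - \Delta(x_2 x_1)$. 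Because $\Delta(x_1^2) = x_1^2 \otimes 1 + 1 \otimes x_1^2 + 2\,x_1 \otimes x_1$ and $2$ is invertible in $\k$ (since $p > 2$), subtracting $\frac{1}{2}\Delta(x_1^2)$ exactly cancels the defect, yielding primitivity of $x_1 x_2 - x_2 x_1 - \frac{1}{2} x_1^2$.

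For the second assertion, I pass to the Radford bosonization $H := R \# \k G$, where $R$ is the braided Hopf algebra quotient $\k\langle V\rangle/(x_1^p,\, x_1 x_2 - x_2 x_1 - \frac{1}{2} x_1^2)$, which is well-defined since both defining relations are primitive in $\k\langle V\rangle$ (the first by a direct check, the second by the preceding paragraph). Writing $X_i := x_i \# 1$ and identifying $g$ with $1 \# g \in H$, the braided primitivity of $x_i$ becomes ordinary skew primitivity $\Delta(X_i) = X_i \otimes 1 + g \otimes X_i$ in $H$. The induced algebra relations are $X_1^p = 0$, $[X_1, X_2] = \frac{1}{2} X_1^2$, $g X_1 = X_1 g$, and $g X_2 = (X_2 + X_1)g$ (reflecting the $g$-action on $x_2$). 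Since $x_2^p \in R$ is $G$-homogeneous of degree $g^p$, its primitivity in $R$ is equivalent to the single identity $\Delta(X_2^p) = X_2^p \otimes 1 + g^p \otimes X_2^p$ in $H \otimes H$.

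Since $\Delta$ is an algebra map on the ordinary Hopf algebra $H$, $\Delta(X_2^p) = (a + b)^p$ with $a := X_2 \otimes 1$ and $b := g \otimes X_2$; by \Cref{palgebra} it suffices to show that each Jacobson correction $s_i(a, b)$ vanishes in $\k$. A direct computation gives $[a, b] = -X_1 g \otimes X_2$, and setting $c_k := X_1^k g \otimes X_2$, the inductively derived identity $X_1^k X_2 - X_2 X_1^k = \frac{k}{2} X_1^{k+1}$ yields $(c_k)(\ad\ a) = \frac{k+2}{2}\, c_{k+1}$, while $g X_1 = X_1 g$ immediately forces $(c_k)(\ad\ b) = 0$. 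Iterating the recurrence produces
\[
a\bigl(\ad(\lambda a + b)\bigr)^{p-1} = -\frac{p!}{2^{p-1}}\,\lambda^{p-2}\, c_{p-1},
\]
so every $s_i(a, b)$ with $i < p - 1$ already vanishes, while $(p - 1)\, s_{p-1}(a, b)$ is a scalar multiple of $p! = 0 \in \k$. As $p - 1$ and $2^{p-1}$ are invertible in $\k$, every $s_i(a, b) = 0$, and since $(g \otimes X_2)^p = g^p \otimes X_2^p$ in the tensor product, we conclude $\Delta(X_2^p) = a^p + b^p = X_2^p \otimes 1 + g^p \otimes X_2^p$. The main obstacle is this iterated bracket calculation: one must pin down the precise closed-form recurrence in order to see the factor $p!$ that forces the correction to vanish in characteristic $p$, since $c_{p-1} = X_1^{p-1} g \otimes X_2$ itself need not vanish in the quotient.
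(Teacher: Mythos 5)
Your proof is correct, and its first half is identical to the paper's. For the second half you take a mildly different route: the paper stays inside the braided category, applying Jacobson's formula (Proposition~\ref{palgebra}) to $x_2\otimes 1$ and $1\otimes x_2$ in the \emph{braided} tensor algebra $A\otimes A$ and proving by induction that $(x_2\otimes 1)(\ad\,(\lambda x_2\otimes 1+1\otimes x_2))^{n}=-\frac{(n+1)!}{2^{n}}\lambda^{n-1}x_1^{n}\otimes x_2$, whereas you first bosonize and run the same Jacobson-formula induction in the ordinary tensor square of $R\#\k G$, with the braiding data $g\cdot x_2=x_2+x_1$ re-encoded as the commutation relation $gX_2=(X_2+X_1)g$. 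The key identity is literally the same in both versions --- your closed form $-\frac{p!}{2^{p-1}}\lambda^{p-2}\,X_1^{p-1}g\otimes X_2$ carries the same factorial $\frac{(n+1)!}{2^{n}}$ that kills the correction terms at $n=p-1$ --- so this is the paper's computation transported through the bosonization. What your version buys is that all manipulations happen in an honest associative algebra with the usual tensor multiplication, at the price of invoking the dictionary between braided primitives of $G$-degree $h$ in $R$ and $(1,h)$-skew-primitives in $R\#\k G$; the paper's version avoids that dictionary but requires care with the braided multiplication on $A\otimes A$. One small point worth making explicit in your write-up: to form the quotient braided Hopf algebra (and hence its bosonization) you need the ideal $(x_1^p,\,x_1x_2-x_2x_1-\tfrac12x_1^2)$ to be not only generated by primitives but also a Yetter--Drinfeld submodule; this holds since both generators are $G$-homogeneous and fixed by the $g$-action, but it is not quite the same as "both defining relations are primitive."
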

\begin{proof}
Clearly, we have
\begin{align*}
& \Delta(x_1x_2-x_2x_1)\\
=\,&(x_1\otimes 1+1\otimes x_1)(x_2\otimes 1+1\otimes x_2)-(x_2\otimes 1+1\otimes x_2)(x_1\otimes 1+1\otimes x_1)\\
=\,&(x_1x_2-x_2x_2)\otimes 1+g\cdot x_2\otimes x_1-g\cdot x_1\otimes x_2+1\otimes (x_1x_2-x_2x_1)\\
=\,&(x_1x_2-x_2x_2)\otimes 1+x_1\otimes x_1+1\otimes (x_1x_2-x_2x_1).
\end{align*}
Subtracting $\Delta(\frac{1}{2}x_1^2)=\frac{1}{2}x_1^2\otimes 1+x_1\otimes x_1+1\otimes \frac{1}{2}x_1^2$ from the above equation, we deduce that $x_1x_2-x_2x_1-\frac{1}{2}x_1^2$ is primitive.

Now consider the quotient braided Hopf algebra $A:=\k\langle V\rangle/(x_1^p,x_1x_2-x_2x_1-\frac{1}{2}x_1^2)$. By induction on $n \ge 1$, we have
\[
x_1^nx_2-x_2x_1^n=\frac{n}{2}x_1^{n+1}.
\]
Next we prove the following identity inductively 
\begin{align}
\label{E:adjoint}
(x_2\otimes 1)(\ad\, (\lambda\, x_2\otimes 1+1\otimes x_2))^n=-\frac{(n+1)!}{2^n}\lambda^{n-1}\, x_1^n\otimes x_2,
\end{align}
for all $n\ge 1$ in the braided algebra $A\otimes A$.  First, one checks that it holds for $n=1$:
\begin{align*}
[x_2\otimes 1, \lambda x_2\otimes 1+1\otimes x_2]&\, =\lambda x_2^2\otimes 1+x_2\otimes x_2-\lambda x_2^2\otimes 1-g\cdot x_2\otimes x_2\\
&\, =-x_1\otimes x_2.
\end{align*}
Suppose it is true for $n=m$. Then for $n=m+1$, we have 
\begin{align*}
&\, (x_2\otimes 1)(\ad\, (\lambda\, x_2\otimes 1+1\otimes x_2))^{m+1}\\
&\, =[-\frac{(m+1)!}{2^m}\lambda^{m-1}\, x_1^m\otimes x_2,\lambda x_2\otimes 1+1\otimes x_2]\\
&\, =-\frac{(m+1)!}{2^m}\lambda^{m-1}\, [x_1^m\otimes x_2,\lambda x_2\otimes 1+1\otimes x_2]\\
&\, =-\frac{(m+1)!}{2^m}\lambda^{m-1}\, (\lambda x_1^m(g\cdot x_2)\otimes x_2+x_1^m\otimes x_2^2-\lambda x_2x_1^m\otimes x_2-g\cdot(x_1^m)\otimes x_2^2)\\
&\, =-\frac{(m+1)!}{2^m}\lambda^{m-1}\, (\lambda x_1^{m+1}\otimes x_2+\lambda (x_1^mx_2-x_2x_1^m)\otimes x_2)\\
&\, =-\frac{(m+1)!}{2^m}\lambda^{m-1}\, \lambda (1+\frac{m}{2})x_1^{m+1}\otimes x_2\\
&\, =-\frac{(m+2)!}{2^{m+1}}\lambda^{m}\, x_1^{m+1}\otimes x_2.
\end{align*}
This proves \eqref{E:adjoint}. Now we apply Proposition \ref{palgebra}. Then
\[
\Delta(x_2^p)=(x_2\otimes 1+1\otimes x_2)^p=x_2^p\otimes 1+1\otimes x_2^p+\sum_{i=1}^{p-1}s_i(x_2\otimes 1,1\otimes x_2),
\]
where $is_i(x_2\otimes 1,1\otimes x_2)$ is the coefficient of $\lambda^{i-1}$ in \eqref{E:adjoint} by letting $n=p-1$. One sees that all terms of $\lambda^{i-1}$ vanish as $n=p-1$. Hence $x_2^p$ is primitive. 
\end{proof}

\begin{prop}
\label{NicholsAlgebraRank2}
Let $V$ be a two-dimensional Yetter-Drinfeld module over $\k G$ with basis $\{x_1,x_2\}$.
\begin{enumerate}
\item If $V$ is of diagonal type, then $\BV=\k[x_1,x_2]/(x_1^p,x_2^p)$.
\item If $V$ is of Jordan type, then 
\[
\BV=
\begin{cases}
\k\langle x_1,x_2\rangle/(x_1^2,x_2^4,x_2^2x_1+x_1x_2^2+x_1x_2x_1,x_1x_2x_1x_2+x_2x_1x_2x_1), &  p=2\\
\k\langle x_1,x_2\rangle/(x_1^p,x_2^p,x_1x_2-x_2x_1-\frac{1}{2}x_1^2), & p>2.
\end{cases}
\]
\end{enumerate}
In both cases, $V=\text{Span}\{x_1,x_2\}$ is the primitive space of $\BV$.
\end{prop}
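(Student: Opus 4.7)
The plan is to produce, in each case, an explicit collection of primitive elements of degree $\ge 2$ in the free braided Hopf algebra $\k\langle V\rangle$ (or in iterated quotients thereof). Because the Nichols algebra $\BV$ has its primitive space equal to $V$ by construction, each such element must vanish in $\BV$; this yields a graded braided Hopf surjection $A \twoheadrightarrow \BV$, where $A$ is the algebra defined by the claimed relations. The remaining work in each case is to prove this surjection is an isomorphism by showing that $A$ itself has primitive space equal to $V$: then $A$ is a connected graded braided Hopf algebra generated in degree one by its full primitive space, hence equals $\BV$ by the universal characterization of Nichols algebras.

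For the diagonal case, Lemma \ref{L:PositiveYDRank2} combined with the fact that the only character on a finite $p$-group in characteristic $p$ is trivial forces the $G$-action on $V$ to be trivial in all subcases; equivalently, using Lemma \ref{L:diagonal} for case (3), the braiding matrix is the identity. Hence the Yetter--Drinfeld braiding $c$ on $V$ is the ordinary flip, so the braided tensor product reduces to the usual one. The commutator $[x_1,x_2]$ and the $p$-th powers $x_1^p,x_2^p$ are then each primitive by standard calculation (the $p$-th powers via Proposition \ref{palgebra}), so $\BV$ is a quotient of $A=\k[x_1,x_2]/(x_1^p,x_2^p)$. But $A$ is the restricted universal enveloping algebra of the abelian restricted Lie algebra $V$ with zero $p$-map, whose primitive elements are exactly $V$ by Jacobson's theorem; so $\BV=A$.

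For the Jordan case with $p>2$, Lemma \ref{L:primitiveJordan} already provides the primitive elements $x_1^p$, $x_1x_2-x_2x_1-\frac{1}{2}x_1^2$, and $x_2^p$ (the last in the quotient by the first two), yielding the desired surjection $A\twoheadrightarrow\BV$ with $A=\k\langle x_1,x_2\rangle/(x_1^p,x_2^p,x_1x_2-x_2x_1-\frac{1}{2}x_1^2)$. The commutator relation lets one reorder every monomial into the form $x_2^j x_1^i$ with $0\le i,j<p$, so $\dim A\le p^2$; linear independence of this family follows from a PBW-type filtered argument, comparing the associated graded of $A$ with the symmetric quotient $\k[x_1,x_2]/(x_1^p,x_2^p)$. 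To conclude, one checks directly on the explicit basis that the reduced coproduct $\overline{\Delta}$ is nonzero on every monomial $x_2^j x_1^i$ with $i+j\ge 2$, ruling out new primitives.

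For the Jordan case with $p=2$, one iteratively produces the four primitive relations by direct coproduct expansions using the braiding $c(x_j\otimes x_i)=(g\cdot x_i)\otimes x_j$ with $g\cdot x_1=x_1$ and $g\cdot x_2=x_2+x_1$: first $x_1^2$ (whose cross-term $2\,x_1\otimes x_1$ vanishes in characteristic two), then $x_2^2x_1+x_1x_2^2+x_1x_2x_1$ in degree three, then $x_1x_2x_1x_2+x_2x_1x_2x_1$ in degree four, and finally $x_2^4$ via Proposition \ref{palgebra} applied in the prior quotient. A normal-form argument on the resulting $A$ gives $\dim A\le 8$. The main obstacle in both Jordan cases is this last step of verifying that $A$ has no primitive elements outside $V$, since the clean restricted-enveloping-algebra characterization used in the diagonal case is not directly available. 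My plan there is to either compute $\HL^{2,d}(\BV,\k)$ explicitly via the cobar construction of Subsection \ref{subsec:Hochschild} and apply Theorem \ref{ExtNichols}, or to construct a faithful $A$-module of the expected dimension; the $p=2$ bookkeeping, with four noncommutative relations of increasing degree, will be the most computationally delicate part of the whole argument.
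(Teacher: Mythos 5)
Your overall architecture (exhibit primitive elements of degree $\ge 2$ to get a surjection $A\twoheadrightarrow\BV$, then show $A$ has no primitives outside $V$) is the standard one, and your diagonal case is correct and in fact more complete than the paper's one-line treatment: identifying $\k[x_1,x_2]/(x_1^p,x_2^p)$ as the restricted enveloping algebra of an abelian restricted Lie algebra with zero $p$-map and invoking $P(u(\mathfrak g))=\mathfrak g$ is a clean way to close that case. The trouble is in the Jordan cases, precisely at the step you flag as the ``main obstacle,'' and neither of your proposed remedies repairs it. Checking that $\overline{\Delta}$ is nonzero on each basis monomial $x_2^jx_1^i$ with $i+j\ge 2$ does not rule out new primitives: a primitive element could be a linear combination of monomials whose individual reduced coproducts are nonzero but cancel; what you need is injectivity of $\overline{\Delta}$ on each graded piece $A_d$, $d\ge 2$. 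Your fallbacks do not supply this: Theorem~\ref{ExtNichols} runs in the opposite direction (it detects non-primitive generators of a coradically graded $R$ relative to $\B(R_1)$, and would require knowing $\HL^{2}$ of the very Nichols algebra you are trying to identify), and a faithful $A$-module of the expected dimension only bounds $\dim A$ from below, which is not what is at stake. The paper closes this step by an entirely different and shorter route: since $V$ embeds into $\BV$ by definition, the kernel $I$ of $B\twoheadrightarrow\BV$ is a braided Hopf ideal with $I\cap V=0$; Nichols--Zoeller freeness \cite{NZ} (applied through the bosonization) forces $\dim B/I$ to divide $p^2=\dim B$, and the embeddings of the two $p$-dimensional Hopf subalgebras $\k[x_1]$ and $\k[x_2]$ then force $\dim B/I=p^2$, i.e.\ $I=0$. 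If you want to keep your primitive-space criterion, you must replace the monomial check by this kind of dimension argument or by a genuine degree-by-degree computation of $\ker\overline{\Delta}$ on general elements.

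There is also a concrete numerical error in your $p=2$ Jordan case: $\dim\BV=16$, not $\le 8$ (the paper lists the sixteen basis monomials explicitly, consistent with \cite{CLW}, which is all the paper offers for $p=2$). The bound $8$ presumably comes from imagining a PBW basis $x_1^ix_2^j$ with $0\le i\le 1$, $0\le j\le 3$, but the degree-three relation only rewrites $x_2^2x_1$; it does not let you move a single $x_2$ past $x_1$, so both $x_1x_2$ and $x_2x_1$ (and longer alternating words) survive in the normal form. Carried through, your estimate would ``prove'' that the quotient $A$ has dimension at most $8$ while surjecting onto a $16$-dimensional algebra, so the normal-form step must be redone before the $p=2$ argument can work.
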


\begin{proof}
(1) If $V$ is of diagonal type, then it must have trivial braiding by Lemma \ref{L:PositiveYDRank2}. Following similar computation argument as in Lemma \ref{L:primitiveJordan}, we can check that $x_1^p,x_2^p$ and $[x_1,x_2]$ are all primitive. Hence the Nichols algebra $\BV$ is given by $\k[x_1,x_2]/(x_1^p,x_2^p)$. 

(2) The Jordan type can be derived from \cite[Theorem 3.1 and Theorem 3.5]{CLW}. Here we give a short proof for the case $p>2$. By Lemma \ref{L:primitiveJordan}, one sees that $\BV$ is a braided Hopf algebra quotient of $B:=\k\langle x_1,x_2\rangle/(x_1^p,x_2^p,x_1x_2-x_2x_1-\frac{1}{2}x_1^2)$. Hence, it suffices to show that any braided Hopf ideal $0\neq I\subset B$ must have $I\cap V\neq 0$. It is easy to check that $\dim B=p^2$. Since Nichols-Zoeller's freeness theorem \cite{NZ} still holds for $B$, $(\dim B/I)$ must divide $p^2$. Suppose $I\cap V=0$. Then there would be embeddings for the two Hopf subalgebras $\k[x_1]$ and $\k[x_2]$ into $B/I$. This implies that $\dim B/I = p^2$ and $I=0$. Hence we obtain a contradiction. By the definition of $\BV$, we have $\BV=B$.    
\end{proof}

We make a remark on $\dim \BV$ for the Jordan type in case (2) of Proposition \ref{NicholsAlgebraRank2}. When $p=2$, $\BV$ is a sixteen-dimensional algebra with basis
\begin{gather*}
1,\, x_1,\, x_2,\, x_1x_2,\, x_2x_1,\, x_2^2,\, x_1x_2x_1,\, x_1x_2^2,\\
x_2x_1x_2,\, x_2^3,\, x_1x_2x_1x_2,\, x_1x_2^3,\, x_2x_1x_2^2,\, x_1x_2x_1x_2^2,\, x_2x_1x_2^3,\, x_1x_2x_1x_2^3.
\end{gather*}
When $p>2$, $\BV$ is of dimension $p^2$ with basis $\{x_1^ix_2^j\, |\, 0\le i,j\le p-1\}$.

\begin{cor}
Let $\k$ be an algebraically closed field of characteristic $p>0$, and $V$ be a Yetter-Drinfeld module over a finite $p$-group $G$. Then if $\dim V\le 2$, we have $\dim \BV<\infty$.  
\end{cor}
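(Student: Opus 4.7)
The plan is to dispatch the statement by a short case analysis on $\dim V\in\{0,1,2\}$, invoking the classification results already established in Section~\ref{sec:BVrank2}.

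If $\dim V = 0$ there is nothing to do, since $\BV = \k$. If $\dim V = 1$, choose a basis $\{x_1\}$ for $V$ and observe that, because $G$ is a finite $p$-group in characteristic $p$, the only character $G\to\k^\times$ is the trivial one (every group element satisfies $g^{p^k}=1$ so $\chi(g)$ is a $p^k$-th root of unity in $\k$, which must equal $1$). Hence the $G$-action on $x_1$ is trivial and the braiding on $V$ is the identity. A direct computation (identical in spirit to Lemma~\ref{L:primitiveJordan}, but now with trivial braiding) shows $x_1^p$ is primitive in $\k\langle V\rangle$, so $\BV$ is a quotient of $\k[x_1]/(x_1^p)$, and a Nichols-Zoeller-style freeness argument (as used at the end of the proof of Proposition~\ref{NicholsAlgebraRank2}) shows that $\BV = \k[x_1]/(x_1^p)$, of dimension $p$.

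If $\dim V = 2$, then Proposition~\ref{prop:InfiniBraiding}, together with its refinement Lemma~\ref{L:PositiveYDRank2} over a finite $p$-group, tells us that $V$ is either of diagonal type (with trivial braiding) or of Jordan type. Proposition~\ref{NicholsAlgebraRank2} then gives an explicit finite presentation of $\BV$ in each case: the algebra $\k[x_1,x_2]/(x_1^p,x_2^p)$ of dimension $p^2$ in the diagonal case, the $16$-dimensional algebra displayed after that proposition when $p=2$ and $V$ is Jordan, and $\k\langle x_1,x_2\rangle/(x_1^p,x_2^p,x_1x_2-x_2x_1-\tfrac12 x_1^2)$ of dimension $p^2$ when $p>2$ and $V$ is Jordan.

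In every case the dimension of $\BV$ is finite, which proves the corollary. There is no real obstacle here, since all the work has already been done in Proposition~\ref{NicholsAlgebraRank2}; the only mild point worth flagging is the one-dimensional case, where one must remark that the $p$-group hypothesis together with $\Chara\k=p$ forces the braiding to be trivial so that $x_1^p$ is primitive and the Nichols algebra is truncated at degree $p-1$.
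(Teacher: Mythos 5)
Your proposal is correct and follows the same route as the paper: the paper's own proof simply notes that the case $\dim V=1$ is easy to check and that the case $\dim V=2$ follows from Proposition~\ref{NicholsAlgebraRank2}, which is exactly your case analysis. The only difference is that you spell out the (correct) details of the one-dimensional case -- triviality of characters of a $p$-group in characteristic $p$ forces trivial braiding, so $\BV=\k[x_1]/(x_1^p)$ -- which the paper leaves to the reader.
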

\begin{proof}
The case of $\dim V=1$ is easy to check and the case of $\dim V=2$ follows from Proposition \ref{NicholsAlgebraRank2}. 
\end{proof}

%%%%%%%%%%%%%%%%%%%%%%%%%%%%%%%%

\section{Pointed $p^3$-dimensional Hopf algebras: structures of $R$ and of $\grH$}
\label{sec:R}

For the rest of the paper, we let our base field $\k$ be algebraically closed of prime characteristic $p>0$. Let $H$ be any pointed Hopf algebra of dimension $p^3$ over $\k$ with $H_0= \k G= \k G(H)$. Our goal is to classify all isomorphism classes of such $H$. Our strategy is first to classify $\grH \cong R \# \k G$, where $R$ is a graded braided Hopf algebra in $\YD$, then lift $\grH$ structures to $H$. As the dimension of $H_0$ divides the dimension of $H$ by Nichols-Zoeller Theorem \cite{NZ}, we consider the following cases: 
 
(a) If $\dim H_0=1$, then $H_0=\k$, i.e., $H$ is connected. This part was done by joint work of both authors with L.~Wang in \cite{NWW1,NWW2}. 

(b) If $\dim H_0=p$, then $G=C_p=\langle g\rangle$ and $\dim R=p^2$. This part is discussed in Section~\ref{subsec:prim} cases (A)-(B) and in Section~\ref{subsec:non-prim} case (C), where (C) is the non-primitively generated case.

(c) If $\dim H_0=p^2$, then $G=C_{p^2}$ or $C_p\times C_p$ and $\dim R=p$. This part is handled in Section~\ref{subsec:prim} case (D).

(d) If $\dim H_0=p^3$, then $H=\k G$ is a group algebra of dimension $p^3$. The classification of finite groups of order $p^3$ is well-known:
\begin{itemize}
\item \textbf{abelian}: $C_{p^3}, C_{p^2}\times C_p, C_p\times C_p\times C_p$.
\item \textbf{non-abelian}: dihedral group $D_4$ and quaternion group $Q_8$ for $p=2$; and for $p>2$, Heisenberg group $UT(3,p)$ and 
\[
G_p=\left\{
\begin{pmatrix}
1+(p)  &  b\\
0  &       1
\end{pmatrix},
(p)=p\mathbb Z/(p^2),\ \text{and}\ b\in \mathbb Z/(p^2)
\right\}.
\]
\end{itemize}

%%%%%%%%%%%%%%%%%%%%%%%%%%%%%%%%

\subsection{$R$ is primitively generated} 
\label{subsec:prim}

In this section, suppose that $R$ is primitively generated, that is $R$ is generated by $R_0=\k$ and $R_1=\Prim(R)$ of primitive elements. In this case, we know $R=\B(R_1)$ Nichols algebra, which is uniquely determined by the infinitesimal braiding of $R_1$.

If $H_0=\k C_p$ with generator $g$ of $C_p$, then $R_1$ is two-dimensional with basis $\{r_1, r_2\}$. The structure of $R \in \YD$ depends on how $G$ (co)acts on these basis elements. We apply Lemma \ref{L:PositiveYDRank2}.\\

\noindent
\textbf{Case (A).} $G=C_p=\langle g\rangle$ and $R_1$ is of diagonal type. Suppose $p>2$, this forces the action of $G$ to be trivial:
$$g \cdot r_1 = r_1, \qquad \qquad g \cdot r_2 = r_2.$$ 
Assume $r_1, r_2$ have $G$-gradings  $g^u, g^v$, respectively, for some $u, v \in \{0, 1, \ldots, p-1\}$. Without loss of generality, we may obtain either
\begin{itemize}
 \item \textbf{(A1)}: The $G$-coaction is non-trivial. We can assume $\rho_R(r_1)=g\otimes r_1$ and $\rho_R(r_2)=g^u \otimes r_2$, for $0\le u\le p-1$; or
 \item \textbf{(A2)}: The $G$-coaction is trivial, that is $\rho_R(r_1)=1\otimes r_1$ and $\rho_R(r_2)=1 \otimes r_2$.
\end{itemize} 
Suppose $p=2$, then the $G$-coaction is trivial and $g$ interchanges $r_1$ and $r_2$ such that  
\begin{itemize}
 \item \textbf{(A3)}: $g \cdot r_1 = r_2, \, g \cdot r_2 = r_1$, and $\rho_R(r_1)=1\otimes r_1, \, \rho_R(r_2)=1 \otimes r_2$.
\end{itemize} 
By Proposition~\ref{NicholsAlgebraRank2}, the structure of $R$ is determined by $R=\k[r_1,r_2]/(r_1^p,r_2^p)$ in all three cases. \\

\noindent
\textbf{Case (B).} $G=C_p=\langle g\rangle$ and $R_1$ is of Jordan type. This only happens when $p>2$, since if $p=2$ Proposition~\ref{NicholsAlgebraRank2} shows that $\dim B(R_1)=2^4\neq 2^2$. We can assume $r_1, r_2$ have the same $G$-grading $g$ with $G$-action 
$$g \cdot r_1 = r_1\quad g \cdot r_2 = r_1 + r_2.$$ 
By Proposition~\ref{NicholsAlgebraRank2}, the structure of $R$ is determined by $R=\k\langle r_1,r_2\rangle /(r_1^p,r_2^p,r_1r_2-r_2r_1-\frac{1}{2}r_1^2)$.\\
 
\noindent
\textbf{Case (D).} $G=C_{p^2}$ or $C_p\times C_p$. Hence $R_1$ is one-dimensional with basis element $r$ of some $G$-grading $g_u \in G$. Clearly $G$ trivially acts on $r$. 

If $G=C_{p^2}$, a cyclic group generated by $g$, then $g_u = g^s$ for some $s \geq 0$. There are three cases that can happen for this $G$-grading $g^s$: 
\begin{itemize}
 \item $s=0$, hence $g^s = 1$; 
 \item $s$ is coprime with $p^2$, then by changing generator, we can rewrite $g^s$ as $g$;  
 \item $s=pv$, where $v$ is coprime with $p$, then by changing generator, we can rewrite $g^s$ as $g^p$. 
\end{itemize}
It is clear that these three cases are not isomorphic. 

If $G = C_p\times C_p =\langle g_1 \rangle\times \langle g_2 \rangle$, one can view $G$ as a two-dimensional vector space over $\mathbb{F}_p$, picking a generator of $G$ is the same as picking a basis of this vector space. Thus, having $G$-grading $g_u=(g_1,1) \in G$ yields isomorphic result as having $G$-grading $g_u=(1,g_2)$.

Therefore, in case (D), without loss of generality, we can obtain either
\begin{itemize}
 \item \textbf{(D1)}: When $G=C_{p^2}=\langle g\rangle$, we can take $g_u=1$, $g$ and $g^p$; or 
 \item \textbf{(D2)}: When $G=C_p\times C_p=\langle g_1 \rangle\times \langle g_2 \rangle$, we can take $g_u=1$ or $g_1$.
\end{itemize} 
It is easy to see that $R$ is determined by $R=\k[r]/(r^p)$.

%%%%%%%%%%%%%%%%%%%%%%%%%%%%%%%%

\subsection{$R$ is non-primitively generated} 
\label{subsec:non-prim}
In positive characteristic, $R$ is not primitively generated in general. Thus, we need to consider an additional case: \\

\noindent
\textbf{Case (C).} When $G=C_p=\langle g\rangle$ and assume $R$ is generated by a primitive element $r \in R_1$ and a \textit{non-primitive} element $z$. Then the only way $G$ can act on primitive generator $r$ is 
$$g \cdot r = r, \qquad \qquad \rho_R(r) = g^u \otimes r,$$ 
for some $u \in \{0, 1, \ldots, p-1\}$. Without loss of generality, we can assume $g^u=1$ or $g$; and under this $G$-action, we have $r^p=0$. Let $\B(R_1) = \k[r]/(r^p)$ be the Nichols algebra of $R_1$. Then $\B(R_1)$ is a graded braided Hopf subalgebra of $R$ of dimension $p$. \\

\noindent
\textbf{Claim:} $\dim\HL^2(\B(R_1),\k)=1$ and it is spanned by the 2-cocycle
\begin{align}
\label{E:Bock}
\Bock(r):=\sum_{1\le i\le p-1} \frac{(p-1)!}{i!\,(p-i)!}\, \left(r^i\otimes r^{p-i}\right). 
\end{align}
\begin{proof}[Proof of claim]
Let $R_1^*$ be the dual of $R_1$ in $\YD$. It is clear that $R_1^*$ is still one-dimensional, hence it has trivial braiding since the $G$-action must be trivial. So one sees that $\B(R_1^*)\cong \k C_p$ as algebras. By Lemma \ref{Cobar}, we have 
\[
\dim \HL^2(\B(R_1),\k)=\dim \HH^2(\B(R_1^*),\k)=\dim \HH^2(\k C_p,\k)=\dim \HL^2(C_p,\k)=1.
\] 
It is easy to check that $\Bock(r)$ is a $2$-cocycle in the cobar construction $\Omega\B(R_1)$. By a similar degree argument as in \cite[Proposition 6.2]{wang2012connected}, one can conclude that $\Bock(r)$ does not lie in the coboundary of $\Omega\B(R_1)$.
\end{proof}

Observe that $\Bock(r)$ has total degree (Adams grading) $p$ in $\B(R_1)$ and $\B(R_1)_p=0$. We apply \Cref{ExtNichols} to get an injective Yetter-Drinfeld module map $\partial^1: R_p\hookrightarrow \HH^{2,p}(\B(R_1),\k)$. By a simple dimension argument, one sees that $\partial^1$ is an isomorphism. Thus we can choose the non-primitive element $z$ to be a basis of $R_p$ and write 
\[\Delta(z)= z \otimes 1 + 1 \otimes z + \Bock(r).\] 
Moreover, one checks that $ \Bock(r)$ has trivial $G$-grading since $g^{up}=1$ by (\ref{E:Bock}). Hence $z$ also has trivial $G$-grading such that $\rho_R(z) = 1 \otimes z$. 

To determine the relations between $r$ and $z$, observe that the $G$-actions on them are both trivial. Hence 
\begin{align*}
\Delta\left([r,z]\right)&\, =\Delta\left(rz-zr\right)=[r\otimes 1+1\otimes r,z\otimes 1+1\otimes z+\Bock(r)]\\
&\, =(rz - zr)\otimes 1+ 1\otimes (rz-zr)=[r,z]\otimes 1+1\otimes [r,z].
\end{align*}
So $[r,z] \in R_1\cap R_{p+1}=0$ and $rz=zr$. One can also easily compute $\Delta(z^p) = (z \otimes 1 + 1 \otimes z + \Bock(r))^p = z^p \otimes 1 + 1 \otimes z^p + \Bock(r^p) = z^p\otimes 1+1\otimes z^p$, implying $z^p \in R_1\cap R_{p^2}=0$. 

As a conclusion, for case (C), the structure of $R$ is given by 
$$R = \k[r, z] / (r^p, z^p),$$ 
where $r \in R_1$ is a primitive element of $G$-grading $1$ or $g$, and $z \in R_p$ is a non-primitive element of trivial $G$-grading.

In summary, we obtain the following $R$-structures in $\grH = R \# H_0$ in \Cref{tab:typeR}. Here, we use use $g$ to denote the generators of the cyclic groups $C_p$ and $C_{p^2}$; use $g_1,g_2$ to denote the generators of $C_p\times C_p$.  The notation $\epsilon \in \{0,1,p\}$ is used to mean $g^\epsilon=1$ or $g$ in $C_p$, and $g^\epsilon=1,g,$ or $g^p$ in $C_{p^2}$.

%\newpage
\FloatBarrier
\begin{table}[!htp]%[h]%[htp] % table is a floating environment. TeX has special rules for placing floats so that they cause minimal disruption of the surrounding text. You can give LaTeX hints of how to place your floats. For example, to make LaTeX try to place it where you inserted it, specify the option h (for ?here?). Note that this won't always work because LaTeX still tries to make the text look nice. Your hint is only that: a hint, not a definite command. You can force LateX to obey your command to place a floating environment HERE by loading the package "float" and specifying the option H (capital letter!). But notice that this can lead to an ugly layout. 
\begin{center}%\small\setlength\tabcolsep{2pt}
\caption{Structure of $R$ -- for pointed $p^3$-dimensional Hopf algebras in characteristic $p>0$, when dim $H_0=p$ or $p^2$}\label{tab:typeR} \vspace{-0.2cm}
\begin{tabular}{| p{0.95cm} | p{1.35cm} | p{5cm} | p{4.6cm} | p{1.4cm} |} 

\hline
\bf{Type of $R$} & $G(H)$ & \bf{(Co)Alg. Structure of $R$}  & \bf{$\YD$-mod Structure of $R$} & \bf{\# Iso. classes of $R$} \\
\hline   \hline  
(A1)  & $C_p$ & $r_1^p=r_2^p=0, r_1r_2=r_2r_1,\ \Delta(r_i)=r_i \otimes 1+1 \otimes r_i$ & $g \cdot r_i = r_i,\ \rho(r_1) = g \otimes r_1,\ \rho(r_2) = g^{u} \otimes r_2, 0\le u\le p-1$  & $p$ classes \\  \hline

(A2)  & $C_p$ & $r_1^p=r_2^p=0, r_1r_2=r_2r_1,\ \Delta(r_i)=r_i \otimes 1+1 \otimes r_i$,  & $g \cdot r_i = r_i,\ \rho(r_i) = 1\otimes r_i$  & one class \\  \hline

(A3) {\tiny $p=2$}   & $C_p$ & $r_1^p=r_2^p=0, r_1r_2=r_2r_1,\ \Delta(r_i)=r_i \otimes 1+1 \otimes r_i$,  & $g \cdot r_1= r_2, g\cdot r_2=r_1,\ \rho(r_i) = 1\otimes r_i$  & one class \\  \hline

(B) {\tiny $p>2$} & $C_p$ & $r_1^p=r_2^p=0, r_1r_2-r_2r_1=\frac{1}{2}r_1^2,\ \Delta(r_i)=r_i \otimes 1+1 \otimes r_i$ & $g \cdot r_1 = r_1, g \cdot r_2 = r_2 + r_1,\ \rho(r_i) = g \otimes r_i$  & one class \\  \hline

(C)  & $C_p$ & $r^p=z^p=0, rz=zr, \Delta(r)=r \otimes 1+1 \otimes r,\ \Delta(z)=z \otimes 1+1 \otimes z+\Bock(r)$ & $g \cdot r = r, g \cdot z = z,\ \rho(r) = g^{\epsilon}\otimes r, \rho(z) = 1 \otimes z$  & two classes \\  \hline

(D1)  & $C_{p^2}$ & $r^p=0,\ \Delta(r)=r \otimes 1+1 \otimes r$ & $g \cdot r = r,\ \rho(r) = g^{\epsilon} \otimes r$ & three classes \\  \hline

(D2)  & $C_p\times C_p$ & $r^p=0,\ \Delta(r)=1 \otimes r + r \otimes 1$ & $g_i \cdot r = r,\ \rho(r) = g_1^{\epsilon} \otimes r$ &  two classes \\ \hline
\end{tabular}
\end{center}
\end{table}

%%%%%%%%%%%%%%%%%%%%%%%%%%%%%%%%

\subsection{Structure of $\grH$}
\label{subsec:grH}

Given $R$ and $H_0$ as classified in the previous section, here we will obtain their bosonization $\grH \cong R \# H_0$. In the next section, we will lift the structures of $\grH$ to $H$ to classify pointed $p^3$-dimensional Hopf algebras in positive characteristic $p>0$. 

Let us first set up some conventions. The underlying vector space structure of $\grH \cong R \# H_0$ is $R \otimes H_0$. For all $g \in G$, we identify $g=1 \# g \in \grH$ via the inclusion $\iota$. For cases (A1)-(A3) and (B), let $a:=r_1 \# 1$ and $b:=r_2 \# 1$. For case (C), let $a:=r \# 1$ and $b:=z \# 1$. For cases (D1)-(D2), let $a:=r \# 1$. 

We compute the structure of $\grH$ using the structure formulas as seen in c.f.~Section~\ref{sec:prelim}, \cite{R85}, or \cite[Theorem 10.6.5]{MO93}. For cases (A1)-(A3), (B), and (D), when $R$ is primitively generated, one can compute the bosonization structure of $\grH \cong \BV \# \k G$ using simpler formulas, e.g. for all $g \in G$ and $v \in V=R_1$:
$$gv = (g \cdot v) \# g, \quad \text{ and} \quad \Delta(v)= v \otimes 1 + v_{(-1)} \otimes v_{(0)}.$$

Since the computation is straightforward, we do not give details here. We obtain the following bosonization structures for $\grH \cong R \# H_0$ in Table~\ref{tab:grH}. It is a $\k$-algebra generated by $\{a, b, g\}$ in cases (A1)-(A3), (B), and (C); generated by $\{a,g\}$ in case (D1); and generated by $\{a,g_1,g_2\}$ in case (D2), where $C_p\times C_p = \langle g_1 \rangle \times \langle g_2 \rangle$. We also note that the Hopf algebras $\grH$ of case (B) have appeared in \cite{CLW}.

%\newpage
\FloatBarrier
\begin{table}[!htp]%[h]%[htp] % table is a floating environment. TeX has special rules for placing floats so that they cause minimal disruption of the surrounding text. You can give LaTeX hints of how to place your floats. For example, to make LaTeX try to place it where you inserted it, specify the option h (for ?here?). Note that this won't always work because LaTeX still tries to make the text look nice. Your hint is only that: a hint, not a definite command. You can force LateX to obey your command to place a floating environment HERE by loading the package "float" and specifying the option H (capital letter!). But notice that this can lead to an ugly layout. 
\begin{center}%\small\setlength\tabcolsep{2pt}
\caption{Structure of $\grH$ -- for pointed $p^3$-dimensional Hopf algebras in characteristic $p>0$, when dim $H_0=p$ or $p^2$}\label{tab:grH} \vspace{-0.2cm}
\begin{tabular}{| p{0.9cm} | p{0.8cm} | p{3.05cm} | p{7.1cm} | p{1.4cm} |} 
\hline
\bf{Type of $R$} & $G(H)$ & \bf{Algebra Structure of $\grH$}  & \bf{Hopf Structure of $\grH$} & \bf{\# Iso. classes of $\grH$} \\
\hline   \hline  
(A1)  & $C_p$ & $a^p=b^p=0, g^p = 1, ab=ba, \,ga = ag, \,gb=bg$ & $\Delta(g)= g \otimes g, \Delta(a)=a \otimes 1+g \otimes a, \Delta(b)=b \otimes 1+g^u\otimes b, \varepsilon(g)=1, \varepsilon(a)=\varepsilon(b)=0, S(g)=g^{-1}, S(a) = -ag^{-1}, S(b)= -bg^{-u}, 0\le u\le p-1$  & $p$ classes  \\   \hline

(A2)  & $C_p$ & $a^p=b^p=0, g^p = 1, ab=ba, \,ga = ag, \,gb=bg$ & $\Delta(g)= g \otimes g, \Delta(a)= a \otimes 1+1 \otimes a, \Delta(b)=b \otimes 1+1 \otimes b, \varepsilon(g)=1, \varepsilon(a)=\varepsilon(b)=0, S(g)=g^{-1}, S(a) = -a, S(b)= -b$   & one class  \\  \hline

(A3) {\tiny $p=2$}  & $C_p$ & $a^p=b^p=0, g^p = 1, ab=ba, \,ga = bg, \,gb=ag$ & $\Delta(g)= g \otimes g, \Delta(a)= a \otimes 1+1 \otimes a, \Delta(b)=b \otimes 1+1 \otimes b, \varepsilon(g)=1, \varepsilon(a)=\varepsilon(b)=0, S(g)=g^{-1}, S(a) = -a, S(b)= -b$   & one class  \\  \hline

(B) {\tiny $p>2$}  & $C_p$ & $a^p=b^p=0,\,  g^p = 1,\,  ab-ba=\frac{1}{2}a^2, \,ga=ag, \,gb=(a+b)g$ & $\Delta(g)= g \otimes g, \Delta(a)=a \otimes 1+g\otimes a, \Delta(b)=b \otimes 1+g\otimes b, \varepsilon(g)=1, \varepsilon(a)=\varepsilon(b)=0, S(g)=g^{-1}, S(a) = -ag^{-1}, S(b)=(a-b)g^{-1}$  & one class \\  \hline

(C)  & $C_p$ & $a^p=b^p=0, g^p = 1, ab=ba, \,ga = ag, \,gb=bg$ & $\Delta(g)= g \otimes g,\, \Delta(a)=a \otimes 1+g^\epsilon \otimes a,\, \Delta(b)=b \otimes 1 +1 \otimes b + \sum_{1 \le i \le p-1} \frac{(p-1)!}{i!\,(p-i)!}\, (a^i \, g^{\epsilon(p-i)} \otimes a^{p-i} ),\varepsilon(g)=1, \varepsilon(a)=\varepsilon(b)=0, S(g)=g^{-1}, S(a) = -ag^{-\epsilon}, S(b)= -b$   & two classes \\  \hline

(D1)  & $C_{p^2}$ & $a^p=0, g^{p^2} = 1, \,ga = ag$ & $\Delta(g)= g \otimes g, \Delta(a)=a \otimes 1+g^{\epsilon} \otimes a , \varepsilon(g)=1, \varepsilon(a)=0, S(g)=g^{-1}, S(a) = -ag^{-\epsilon}$  & three classes \\  \hline

(D2)  & $C_p\times C_p$ & $a^p=0, g_1^{p} = g_2^{p}=1, \,g_ia = ag_i$ & $\Delta(g_i)= g_i \otimes g_i, \Delta(a)=a \otimes 1+g_1^\epsilon \otimes a, \varepsilon(g_i)=1, \varepsilon(a)=0, S(g_i)=g_i^{-1}, S(a) = -ag_1^{-\epsilon}$ & two classes \\ \hline
\end{tabular}
\end{center}
\end{table}

%%%%%%%%%%%%%%%%%%%%%%%%%%%%%%%%

\section{Pointed $p^3$-dimensional Hopf algebras: liftings from $\grH$ to $H$}
\label{sec:pointed p3}

We will now lift the structures of $\grH$, as seen in Table~\ref{tab:grH}, to $H$. To do that, we need the following two lemmas:

\begin{lem}
\label{pthpower}
Let $G$ be a finite $p$-group and $\delta: \k G\to \k G$ be a $\k$-derivation on $\k G$. Then
\begin{enumerate}
\item Suppose $G=\langle g\rangle$ is cyclic of order $p^n=:q$ and $\delta(g)=g-g^2$. Then $\delta^m(g)=\sum_{0\le i\le q-1} a^m_ig^i$, for $m\ge 1$, where the coefficients are defined by
$$
a^m_0=\sum_{1\le j\le q-1}(-1)^j{q-1\choose j}j^m,\qquad a^m_i=\sum_{0\le j\le i-1} (-1)^j {i-1\choose j}(j+1)^m,
$$
for $1\le i\le q-1$. In particular, we have $\delta^p=\delta$ and $\delta^{p-1}(g)=g-g^p$.

\item Suppose $G=\langle g\rangle$ is cyclic of order $p$ and $\delta(g)=g-g^2$. For any integer $1\le u\le p-1$, we have $$\left(\frac{\delta}{u} + \frac{\delta^2}{u^2} + \cdots + \frac{\delta^{p-1}}{u^{p-1}}\right)(g^{1+u})=0.$$

\item Suppose $G=C_p$ is cyclic of order $p$ and $\delta(g)=g-g^{u+1}$, for any $0\le u\le p-1$. Then we have $\delta^p=\delta$.

\item Suppose $G=C_p\times C_p=\langle g\rangle\times \langle h\rangle $ and $\delta(g)=g-g^2$, $\delta(h)=\tau h(1-g)$, for some $\tau\in \k$. Then $\delta^p=\delta$.

\end{enumerate}
\end{lem}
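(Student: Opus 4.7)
For (1), the plan is to expand $\delta^m(g)$ in the basis $\{g^i\}$ and solve the resulting recursion via a formal generating function. Leibniz gives $\delta(g^k)=k(g^k-g^{k+1})$ for $k\ge 1$, with $g^q=1$ handling the wrap-around at $k=q-1$. Writing $\delta^m(g)=\sum_{i=0}^{q-1}a_i^m g^i$ and applying $\delta$ once more yields
\[
a_i^{m+1}=i\,a_i^m-(i-1)\,a_{i-1}^m\quad (1\le i\le q-1),\qquad a_0^{m+1}=a_{q-1}^m,
\]
where the $a_0$ relation uses $-(q-1)\equiv 1\pmod p$. I would encode this in $A_m(t)=\sum_i a_i^m t^i\in\k[t]$, in which the recursion becomes $A_{m+1}=t(1-t)A_m'$; the (purely formal) substitution $s=t/(1-t)$ converts it into $B_{m+1}(s)=sB_m'(s)$ with $B_m(s)=A_m(t)$. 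Since $B_1(s)=s/(1+s)^2=\sum_{k\ge 1}(-1)^{k-1}ks^k$ and $s\,d/ds$ is diagonal on the $s^k$-basis, iteration yields $B_m(s)=\sum_{k\ge 1}(-1)^{k-1}k^m s^k$, from which extracting the $t^i$-coefficient gives the claimed closed form for $a_i^m$. The consequence $\delta^p=\delta$ then follows from $k^p\equiv k$ (Fermat): $B_p=B_1$ forces $\delta^p(g)=\delta(g)$, and both sides are derivations determined by their values on $g$. Analogously, using $k^{p-1}\equiv 1$ for $p\nmid k$ and the Frobenius identity $(1-t)^p=1-t^p$ in char $p$, one computes $B_{p-1}(s)=s/(1+s)-s^p/(1+s^p)$ and hence $A_{p-1}(t)=t-t^p$, giving $\delta^{p-1}(g)=g-g^p$.

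For (2) and (3), the plan is to reduce both to (1) by short algebraic identities. For (2), set $\tau=\delta/u$. Since $\delta^p=\delta$ by (1) and $u^p=u$ in $\F_p$, we have $\tau^p=\tau$, so $\phi:=\tau+\tau^2+\cdots+\tau^{p-1}$ satisfies $\phi(\tau-1)=\tau^p-\tau=0$ as an operator on $\k G$. Hence $\phi$ annihilates $\Img(\tau-1)$; a direct computation gives $(\tau-1)(-g^u)=-\delta(g^u)/u+g^u=-(g^u-g^{1+u})+g^u=g^{1+u}$, so $\phi(g^{1+u})=0$. For (3) the case $u=0$ is immediate. For $1\le u\le p-1$, set $s:=g^u$; since $\gcd(u,p)=1$, $s$ also generates $C_p$ and $\k[s]=\k G$. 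The computation $\delta(s)=us(1-s)$ identifies $\delta$ with $u\eta$ on $\k[s]$, where $\eta$ is the derivation of (1) with $\eta(s)=s-s^2$. Part (1) gives $\eta^p=\eta$, and so $\delta^p=u^p\eta^p=u\eta=\delta$.

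For (4), the plan is to exhibit a nontrivial element of $\ker\delta$. I would define $g^{-\tau}:=(1+y)^{-\tau}=\sum_{k=0}^{p-1}\binom{-\tau}{k}y^k$ in $\k G=\k[y]/(y^p)$ with $y=g-1$, via the formal binomial series. The polynomial Vandermonde identity $(1+y)^a(1+y)^b=(1+y)^{a+b}$ forces $g^{-\tau}g^\tau=1$, and the formal chain rule yields $\delta(g^{-\tau})=-\tau g^{-\tau}(1-g)$; consequently
\[
\delta(hg^{-\tau})=\tau h(1-g)g^{-\tau}-\tau hg^{-\tau}(1-g)=0.
\]
Leibniz then gives $\delta^m(h)=\delta^m\bigl((hg^{-\tau})g^\tau\bigr)=hg^{-\tau}\cdot\delta^m(g^\tau)$ for every $m\ge 1$. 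Part (1) applied inside the $C_p$-subalgebra $\k[g]\subset \k G$ yields $\delta^p(g^\tau)=\delta(g^\tau)$, whence $\delta^p(h)=\delta(h)$. Combined with $\delta^p(g)=\delta(g)$ from (1) and the fact that $\delta^p-\delta$ is itself a derivation, this gives $\delta^p=\delta$ on all of $\k G$.

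The main obstacle is the combinatorial bookkeeping in (1), particularly verifying the closed form for $a_i^m$ and tracking the wrap-around index $a_0^m$ so that $\delta^{p-1}(g)=g-g^p$ lands on the nose; Fermat, Lucas, and Frobenius all play a role. A subtler conceptual point surfaces in (4): the element $g^{-\tau}$ must be defined by the formal binomial series for arbitrary $\tau\in\k$, not only for $\tau\in\Z$, and the associated chain rule must be justified inside the local ring $\k[y]/(y^p)$; once those are in place, the reduction to (1) is immediate.
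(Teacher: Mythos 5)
Your proposal is correct, and it takes a genuinely different route from the paper in every part. The paper proves (1) by writing out the matrix $M$ of $\delta$ in the basis $\{1,g,\dots,g^{q-1}\}$ and explicitly diagonalizing it, $PMP^{-1}=\Diag(0,1,\dots,q-1)$ with $P,P^{-1}$ given by signed binomial matrices; the closed form for $a^m_i$, the identity $\delta^p=\delta$, and $\delta^{p-1}(g)=g-g^p$ all drop out of $M^m=P^{-1}D^mP$. Your substitution $s=t/(1-t)$, which conjugates $t(1-t)\,d/dt$ into $s\,d/ds$, is the same diagonalization in disguise (the eigenvectors $v_k=P^{-1}e_k$ correspond to the monomials $s^k$), but it packages the consequences more cleanly: $B_p=B_1$ gives $\delta^p=\delta$ at once, and the Frobenius computation $A_{p-1}=t-t^p$ treats all $q=p^n$ uniformly, where the paper checks $q=p$ by hand and only gestures at $q>p$. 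For (2) the paper expands $g^{1+u}$ in the eigenbasis and observes that the coefficient of $v_u$ vanishes; your operator identity $\phi\circ(\tau-1)=\tau^p-\tau=0$ together with $(\tau-1)(-g^u)=g^{1+u}$ is shorter and needs no knowledge of $P^{-1}$. For (3) the paper redoes the matrix argument, while you reduce to (1) by the change of generator $s=g^u$. For (4) the paper decomposes the matrix of $\delta$ into $p$ blocks $B_j$ and computes each characteristic polynomial to be $t^p-t$; your integrating factor $hg^{-\tau}\in\ker\delta$ replaces that entire computation, at the modest cost of defining $g^{-\tau}$ by the truncated binomial series in $\k[g]\cong\k[y]/(y^p)$ and justifying the chain rule there (routine: $k!$ is invertible for $k\le p-1$, and the offending top-degree term is killed by the factor $y$ in $\delta(y)=-(1+y)y$).

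One point you must make precise in (1): the substitution $s=t/(1-t)$ cannot be performed inside $\k G\cong\k[t]/(t^q-1)$, where $1-t$ is nilpotent. You have to lift $\delta$ to the derivation $t(1-t)\,d/dt$ of $\k[t]$, compute $\tilde A_m=\tilde\delta^m(t)$ there as a polynomial of degree $m+1$ via the power-series manipulation, and only then reduce modulo $t^q-1$. Consequently ``extracting the $t^i$-coefficient'' literally yields the stated formula for $a^m_i$ ($i\ge 1$) only when $m+1\le q-1$; for larger $m$ the top coefficients of $\tilde A_m$ fold back, and recovering the formula verbatim requires either the periodicity $a^{m+p-1}_i=a^m_i$ coming from Fermat applied to $(j+1)^m$, or a direct check that the folded coefficients vanish mod $p$. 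The wrap-around is also the only source of the constant term $a^m_0$. None of this affects $\delta^p=\delta$ or $\delta^{p-1}(g)=g-g^p$, which you obtain as honest polynomial identities before reduction, but it is the one place where your sketch is thinner than the paper's exact matrix computation, and you rightly flag it as the main bookkeeping obstacle.
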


\begin{proof}[Proof of Lemma~\ref{pthpower}]
(1) We consider the matrix $M$ of the $\k$-linear map $\delta$ under the basis $\{1=g^0,\, g,\, g^2,\, \ldots,\, g^{q-1}\}$ of $\k G$, where we associate each $g^i$ to a standard column vector $e_i = [0, \ldots, 0, 1, 0, \ldots, 0]^T$, with the $(i+1)$-th entry is $1$ and zero everywhere else.
{\small
\[ M= \left(%
\begin{array}{cccccccc}
 0& 0 &  0 &  0 &\ldots &0 &0 &-(q-1) \\
 0 & 1 & 0 & 0 &\ldots &0&0 &0 \\
 0 & -1 & 2 & 0 &\ldots &0 &0 &0   \\
 0 & 0 & -2 & 3 &\ldots &0&0 &0   \\
% \ & \ & \ & \  &\ &\ &\ &\ \\
 \vdots & \vdots & \vdots & \vdots  & \ddots &\vdots &\vdots &\vdots \\
 %\ & \ & \ & \  &\ &\ &\ &\ \\
 0 & 0 & 0 & 0 &\ldots &  -(q-3) &(q-2)& 0 \\
 0 & 0 & 0 & 0 & \ldots &0 & -(q-2) & (q-1) \\
\end{array}%
\right)_{q\times q}.
\]
}

The matrix $M$ can be diagonalized as
{\small
\[PMP^{-1} =
\left(
 \begin{array}{ccccc}
   0  & \ &\  &  \\
    & 1 &\  & \ \\
    &\ & 2\\
    &  & & \ddots\\
    & & & & (q-1)
 \end{array}
\right)_{q\times q}=:D\ \text{, by}\]
}
{\small
\begin{align*}
P &= \left(%
\begin{array}{ccccccccc}
 1 &  1 & 1 & 1 &1 & \ldots & 1 &1  \\
 0 & 1 & 0 &0 &0 &\ldots &0 &0\\
 0 &-1 & 1 & 0 &0 &\ldots &0 & 0  \\
 0&1 & -2 & 1 &0 &\ldots & 0 & 0\\
 0&-1 & 3 & -3 &1 &\ldots & 0 & 0  \\
 \vdots & \vdots & \vdots & \vdots &\vdots &\ddots &\vdots &\vdots   \\
0 & 1 & -\binom{q-3}{1} &\binom{q-3}{2} & -\binom{q-3}{3}&\ldots &1 & 0\\
0& -1 & \binom{q-2}{1} &- \binom{q-2}{2}  & \binom{q-2}{3} & \ldots & -\binom{q-2}{q-3}  & 1 \\
\end{array}%
\right)_{q\times q}\quad \text{and} \\ 
%%%%%%%%%%%%%%%%%%%%%
P^{-1}&= \left(%
\begin{array}{ccccccccc}
 1 &  -\binom{q-1}{1} & -\binom{q-1}{2} & -\binom{q-1}{3} &-\binom{q-1}{4} & \ldots & -\binom{q-1}{q-2} &-1  \\
 0 & 1 & 0 &0 &0 &\ldots &0 &0\\
 0 &1 & 1 & 0 &0 &\ldots &0 & 0  \\
 0&1 & 2 & 1 &0 &\ldots & 0 & 0\\
 0&1 & 3 & 3 &1 &\ldots & 0 & 0  \\
 \vdots & \vdots & \vdots & \vdots &\vdots &\ddots &\vdots &\vdots   \\
0 &1 & \binom{q-3}{1} &\binom{q-3}{2} & \binom{q-3}{3}&\ldots &1 & 0\\
0& 1 & \binom{q-2}{1} & \binom{q-2}{2}  & \binom{q-2}{3} & \ldots & \binom{q-2}{q-3}  & 1 \\
\end{array}
\right)_{q\times q}.
\end{align*}
}
Therefore, applying on $g =e_1$, we have: 
\begin{align*}
\delta^m(g)=&\, M^m(e_1)=(P^{-1}DP)^m(e_1)=P^{-1}D^mP(e_1)\\
=&\,P^{-1}D^m[1,1,-1,\dots, 1,-1]^T\\
=&\,P^{-1}[0,1^m,-2^m,\dots,(q-2)^m,-(q-1)^m]^T\\
=&\, \sum_{0\le i\le q-1} a^m_ie_i.
\end{align*}
Then it is easy to find the formulas for all the coefficients $a^m_i$. Moreover, we have $\delta^p=M^p=(P^{-1}DP)^p=P^{-1}D^pP=P^{-1}DP=M=\delta$. For $\delta^{p-1}$, by the definition of $\delta$, one sees that $a_i^{p-1}=0$, for all $i>p$. If $q=p$, then it is easy to check that $a^{p-1}_0=-1$, $a^{p-1}_1=1$ and $a^{p-1}_i=0$, for $2\le i\le p-1$. Hence $\delta^{p-1}(g)=g-1=g-g^p$. The argument for $q>p$ is similar. \\

(2) By the proof of (1), there are eigenvectors $v_0,v_1,\dots, v_{p-1}$ in $\k G$ such that $\delta(v_i)=iv_i$, for $0\le i\le p-1$. Then one sees that 
\begin{align*}
\left(\frac{\delta}{u}+(\frac{\delta}{u})^2+\cdots (\frac{\delta}{u})^{p-1}\right)(v_j)&\, =\left(\prod_{2\le i\le p} (\frac{\delta}{u}-i)\right)(v_j)=\left(\prod_{2\le i\le p} (\frac{j}{u}-i)\right)(v_j)\\
&\, =
\begin{cases}
0,   &  j\neq u\\
\left(\prod_{2\le i\le p} (1-i)\right)(v_j)=(p-1)! \, (v_j), &   j=u
\end{cases}
\end{align*}
Moreover, by using the matrix $P^{-1}$, $v_i=P^{-1}(e_i)$. Hence 
\[
g^{1+u}=e_{1+u}=
\begin{cases}
\sum_{0\le i\le p-1}v_iP_{i+1,u+2}=v_0+\sum\limits_{u\le i\le p-2}(-1)^{i-u} {i \choose u}v_{i+1}, & 1\le u\le p-2\\
v_0,  &  u=p-1
\end{cases} 
\]
So when we write $g^{1+u}$ as a linear combination of $v_i$'s, it  does not contain the linear term $v_u$. Thus $(\delta/u+\delta^2/u^2+\cdots \delta^{p-1}/u^{p-1})(g^{1+u})=0$.\\

(3) As in the proof of (1), we consider the matrix $M$ of the $\k$-linear map $\delta$ under  the basis $\{1,g,g^2,\dots, g^{p-1}\}$. One sees that $M$ is diagonalizable with eigenvalues $0,1,2,\dots,p-1$. Hence $\delta^p=\delta$. 

(4) We put an ordering on the the basis $\{g^ih^j\,|\, 0\le i,j\le p-1\}$ of $\k G$ such that $g^{i_1}h^{j_1}<g^{i_2}h^{j_2}$ whenever $(j_1<j_2)$ or $(i_1<i_2$ if $j_1=j_2)$. Then we consider the matrix $M$ of the $\k$-linear map $\delta$ under the ordered basis $\{g^ih^j\,|\, 0\le i,j\le p-1\}$. Since $\delta(g^ih^j)=(j\tau+i)g^ih^j-(j\tau+i)g^{i+1}h^j$, $M=\text{Diag}(B_0,B_1,\dots,B_{p-1})$ is a block matrix such that 
{\small
\begin{align*}
B_0&= \left(%
\begin{array}{cccccccc}
 0& 0 &  0 &  0 &\ldots &0 &0 &-(p-1) \\
 0 & 1 & 0 & 0 &\ldots &0&0 &0 \\
 0 & -1 & 2 & 0 &\ldots &0 &0 &0   \\
 0 & 0 & -2 & 3 &\ldots &0&0 &0   \\
% \ & \ & \ & \  &\ &\ &\ &\ \\
 \vdots & \vdots & \vdots & \vdots  & \ddots &\vdots &\vdots &\vdots \\
 %\ & \ & \ & \  &\ &\ &\ &\ \\
 0 & 0 & 0 & 0 &\ldots &  -(p-3) &(p-2)& 0 \\
 0 & 0 & 0 & 0 & \ldots &0 & -(p-2) & (p-1) \\
\end{array}%
\right)_{p\times p}\quad \text{and} \\
B_j&= \left(%
\begin{array}{ccccccc}
 j\tau& 0 &  0 &  0 &\ldots &0 &-j\tau-(p-1) \\
 -j\tau& j\tau+1 & 0 & 0 &\ldots &0 &0 \\
 0 & -j\tau-1 & j\tau+2 & 0 &\ldots &0 &0   \\
 0 & 0 & -j\tau-2 & j\tau+3 &\ldots &0 &0   \\
% \ & \ & \ & \  &\ &\ &\ &\ \\
 \vdots & \vdots & \vdots & \vdots  & \ddots  &\vdots &\vdots \\
 %\ & \ & \ & \  &\ &\ &\ &\ \\
 0 & 0 & 0 & 0 &\ldots &j\tau+(p-2)& 0 \\
 0 & 0 & 0 & 0 & \ldots & -j\tau-(p-2) & j\tau+(p-1) \\
\end{array}%
\right)_{p\times p},
\end{align*}
}
for $1\le j\le p-1$. It suffices to show that each block $B_j$ has eigenvalues $0,1,\dots,p-1$, (mod $p$). Then, since $B_j$ has size $p\times p$, it is diagonalizable and $B_j^p=B_j$. So $M^p=M$, which implies that $\delta^p=\delta$. From the proof of (1), this is true for $B_0$ ($p=q$). For $B_j$ ($1\le j\le p-1$), one can compute its characteristic polynomial as follows
\begin{align*}
f_j(t)=\prod_{0\le i\le p-1}\left(t-(j\tau+i)\right)-\prod_{0\le i\le p-1}(-j\tau-i).
\end{align*}
It is clear to check that $f_j(t)$ has roots $0,1, \dots,p-1$, (mod $p$). Then by the degree argument, one sees that $f_j(t)=t^p-t$, again in mod $p$. Thus the result follows. 
\end{proof}

\begin{lem}
\label{pthcoproduct}
Let $G$ be a cyclic $p$-group generated by $g$. In the $\k$-algebra $A=\k \langle G,x\rangle$, suppose the relation $gx-xg=\mu\,(g-g^2)$ holds for some $\mu\in \k$. Then we have $(g)(\ad\, x)^{p-1}=\mu^{p-1}\, (g-g^p)$. Moreover in the tensor algebra $A\otimes A$, we have $(x\otimes 1+g\otimes x)^p=x^p\otimes 1+g^p\otimes x^p+\mu^{p-1}(g-g^p)\otimes x$. 
\end{lem}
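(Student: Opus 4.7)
My plan is to reduce both identities to the $\k G$-computations recorded in \Cref{pthpower}(1), by exploiting the derivation structure of $\ad x$. For the first identity, since $\ad x$ is a derivation of $A$, the restriction $(-)(\ad x)\colon \k G \to A$ is a $\k$-linear derivation (viewing $\k G \subseteq A$). The hypothesis says $(g)(\ad x) = \mu(g - g^2) = \mu\,\delta(g)$, where $\delta$ is the derivation of $\k G$ featured in \Cref{pthpower}(1). A straightforward Leibniz induction gives $(g^i)(\ad x) = i\mu(g^i - g^{i+1})$, so $(-)(\ad x)$ maps $\k G$ into $\k G$ and agrees with $\mu\,\delta$ on the generator $g$; hence it equals $\mu\,\delta$ on all of $\k G$. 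Iterating, $(g)(\ad x)^n = \mu^n\,\delta^n(g)$, and evaluating at $n = p-1$ and quoting \Cref{pthpower}(1) yields $(g)(\ad x)^{p-1} = \mu^{p-1}(g - g^p)$.

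For the second identity in $A \otimes A$, I will apply \Cref{palgebra} with $X = x \otimes 1$ and $Y = g \otimes x$. The pure powers are immediate because $g$ commutes with $g$ and $x$ with $x$ in the appropriate tensor factors, so $X^p = x^p \otimes 1$ and $Y^p = g^p \otimes x^p$. The real work is to compute the correction terms $s_i(X, Y)$, which by \Cref{palgebra} arise as the coefficients of $\lambda^{i-1}$ in $X \cdot (\ad(\lambda X + Y))^{p-1}$, scaled by $i$.

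The key simplification is that for any $\alpha \in \k G$, the element $\alpha \otimes x$ is annihilated by $\ad(Y)$ (since $\alpha$ commutes with $g$ in the first tensor slot), while $\ad(X)$ sends $\alpha \otimes x$ to $\mu\,\delta(\alpha) \otimes x$ by the same Leibniz observation as in the first part, applied to the left tensor factor. A direct one-step computation shows $X \cdot \ad(\lambda X + Y) = -\mu\,\delta(g) \otimes x$, which already lies in $\k G \otimes x$. Inducting on $n \geq 1$ using the recursion $\alpha_{n} = \lambda\mu\,\delta(\alpha_{n-1})$ then gives
\[
X \cdot (\ad(\lambda X + Y))^n = -\lambda^{n-1}\mu^n\,\delta^n(g) \otimes x.
\]
Setting $n = p - 1$ and invoking \Cref{pthpower}(1) yields $-\lambda^{p-2}\mu^{p-1}(g - g^p) \otimes x$. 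Since only the coefficient of $\lambda^{p-2}$ is nonzero, we conclude $s_i(X, Y) = 0$ for $1 \leq i \leq p - 2$, and $(p-1)\,s_{p-1}(X, Y) = -\mu^{p-1}(g - g^p) \otimes x$; using $p - 1 \equiv -1 \pmod p$ in $\k$, this gives $s_{p-1}(X, Y) = \mu^{p-1}(g - g^p) \otimes x$. Substituting everything into the formula of \Cref{palgebra} produces the asserted expression for $(x \otimes 1 + g \otimes x)^p$.

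The only nontrivial step is the inductive identification of $X \cdot (\ad(\lambda X + Y))^n$; everything else collapses once one observes that $\ad(Y)$ kills all of $\k G \otimes x$, so the tensor-algebra computation reduces cleanly to the single-variable calculation in $\k G$ already carried out in \Cref{pthpower}.
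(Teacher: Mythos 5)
Your proof is correct and follows essentially the same route as the paper: Lemma~\ref{pthpower}(1) identifies $(-)(\ad x)|_{\k G}$ with $\mu\delta$ for the first identity, and Proposition~\ref{palgebra} applied to $X=x\otimes 1$, $Y=g\otimes x$ gives the second. You additionally supply the computation of the correction terms $s_i(X,Y)$ (which the paper merely asserts) and avoid the paper's implicit division by $\mu$, both of which are welcome refinements rather than departures.
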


\begin{proof}
By applying Lemma~\ref{pthpower} (1), with $\delta(g)= (g)(\ad \, \frac{x}{\mu})$, then we have $\delta^{p-1}(g) = \frac{1}{\mu^{p-1}}(g)(\ad \, x)^{p-1} = g-g^p$. Hence, $(g)(\ad\, x)^{p-1}=\mu^{p-1}\, (g-g^p)$. Now the statement holds by applying Proposition~\ref{palgebra}, $(x \otimes 1 + g \otimes x)^p = x^p \otimes 1 + g^p \otimes x^p + (g)(\ad \, x)^{p-1} \otimes x$. 

Alternatively, one may also derive from the argument in \cite[Corollary 4.10]{S} and \cite{WangWang} to show the result.
\end{proof}

%\begin{proof}%[Sketch of proof]
%We will use \cite[Corollary 4.10]{S} several times. First, for $x$ and $a$ as in \cite[Corollary 4.10]{S}, we change variables $y:=xa^{-1}$ and $g:=a^{-1}$ and check that $\Delta(y)=y \otimes 1 + g \otimes y$ and $gy=yg+g^2-g$. Now applying the second and third identities in \cite[Corollary 4.10]{S}, one computes 
%\begin{align*}
%\Delta(y)^p = \Delta(xa^{-1})^p &= \Delta(x^p)\Delta(a^{-p}) - \Delta(x)\Delta(a^{-p}) + \Delta(x)\Delta(a^{-1}) \\
%& = (x^pa^{-p} - xa^{-p} + xa^{-1}) \otimes 1 + a^{-p} \otimes (x^pa^{-p} - xa^{-p}) + a^{-1} \otimes xa^{-1} \\ 
%& = y^p \otimes 1 + g^p \otimes (y^p - y) + g \otimes y \\
%& = y^p \otimes 1 + g^p \otimes y^p + (g - g^p) \otimes y.
%\end{align*}
%Next, for our $x$ and $g$ in the lemma statement, identify $y \leftrightarrow \frac{x}{\mu}$ and $g \leftrightarrow g$, we have 
%$$\Delta(y^p) = \frac{1}{\mu^p}\Delta(x^p) = \frac{1}{\mu^p}(x\otimes 1+g\otimes x)^p$$
%Thus, 
%\begin{align*}
%(x\otimes 1+g\otimes x)^p &= {\mu^p}(y^p \otimes 1 + g^p \otimes y^p + (g - g^p) \otimes y) \\
%& = x^p \otimes 1 + g^p \otimes x^p + \mu^{p-1}(g - g^p) \otimes x
%\end{align*}
%Thus, by Proposition~\ref{palgebra}, one has $(g)(\ad\, x)^{p-1}=\mu^{p-1}\, (g-g^p)$ as claimed. 
%\end{proof}

Before we deal with each case in \Cref{tab:grH}, we state our strategy to determine all possible liftings from $\grH$ to $H$. Suppose $\grH=\k\langle x_1,x_2,\dots\rangle/(f_1,f_2,\dots)$ with coalgebra structure defined on $x_i$. By the structure of pointed coalgebras, one sees clearly that the generators $x_i$ of $\grH$ can be lifted up to be generators of $H$ where the coalgebra structure remains the same once $x_i\in H_1$. The case $(C)$ is the only exception since we have one generator which is not in $H_1$, whose coalgebra structure can be uniquely lifted is proved by computing the Hochschild cohomology. In the Hopf algebra $H$, the old relations $f_i$ in $\grH$ becomes $f_i=r_i$ where $r_i\in H_{n-1}$ if the homogenous relation $f_i$ has degree $n$ in $\grH$. Second, we compute the comultiplication of each $r_i=f_i$ in $H$. Based on the calculation, we explicitly construct some term $a_i$ to make $r_i-a_i$ skew-primitive, or $r_i-a_i\in H_1$. Next we can write $r_i-a_i$ as a linear combination of group like elements and skew-primitive elements of $H$ with suitable coefficients. Finally, we need to check that all the coefficients are compatible in the way that the whole algebra modulo these relations is of dimension $p^3$. 

In the following subsections, for completeness, we provide as much lifting details as possible. The readers may check closely for the argument of one case and the other cases should follow similarly, possibly with additional complications arising from the structure of each case. 

%%%%%%%%%%%%%%%%%%%%%%%%%%%%%%%%

\subsection{Liftings for Case (A)}
\label{liftingA}

Following the above plan and applying Lemmas \ref{pthpower} and \ref{pthcoproduct} frequently in our calculations, we will do the liftings for each case. \\

% Case A1
\noindent 
\textbf{Case (A1).} It is clear that $H$ is the quotient of the free algebra $\k\langle g,x,y\rangle$, where $x,y \in H$ are liftings of $a,b \in \grH$, respectively, subject to the relations 
\[
g^p=1,\ gx-xg=r_1,\ gy-yg=r_2,\ x^p=r_3,\ y^p=r_4,\ xy-yx=r_5,
\] 
for some $r_1,r_2\in H_0$, $r_5\in H_1$ and $r_3,r_4\in H_{p-1}$. The coalgebra structure is determined by  
\[\Delta(g)=g\otimes g,\  \Delta(x)=x\otimes 1+g\otimes x,\ \Delta(y)=y\otimes 1+g^u\otimes y,\]
for some $0\le u\le p-1$. We first determine $r_1$ and $r_2$.
\begin{align*}
\Delta(r_1)&\, =\Delta(gx-xg)=(g\otimes g)(x\otimes 1+g\otimes x)-(x\otimes 1+g\otimes x)(g\otimes g)\\
&\, =(gx-xg)\otimes g+g^2\otimes (gx-xg)=r_1\otimes g+g^2\otimes r_1.
\end{align*}
Since $r_1\in H_0=\k C_p$. It is easy to see that we can write $r_1=\gamma(g-g^2)$, for some $\gamma \in \k$. By a suitable rescaling of the variable $x$, we can take $\gamma \in \{0,1\}$. In the following, we use the notation $r_1=\epsilon_1\, g(1-g)$, where $\epsilon_1 \in \{0,1\}$. Similarly, we have $r_2=\epsilon_2\, g(1-g^u)$, for $\epsilon_2 \in \{0,1\}$.

Next, we apply \Cref{pthcoproduct} to get
\begin{align*}
\Delta(r_3)=\Delta(x^p)=(x\otimes 1+g\otimes x)^p=x^p\otimes 1+1\otimes x^p+\epsilon_1(g-1)\otimes x
\end{align*}
since $g^p=1$, and $\epsilon_1^n=\epsilon_1$, for any $n \geq 0$ and $\epsilon_1 \in \{0,1\}$ in this case. So
\begin{align*}
\Delta(r_3-\epsilon_1\, x)&\, =x^p\otimes 1+1\otimes x^p+\epsilon_1(g\otimes x-1\otimes x)-\epsilon_1(x\otimes 1+g\otimes x)\\
&\, =(x^p-\epsilon_1\, x)\otimes 1+1\otimes (x^p-\epsilon_1\, x).
\end{align*}
Hence, $(r_3-\epsilon_1\, x)$ is primitive. For $r_4$, we use the fact that $[-,y]$ is a derivation, so $[g^u,y]=ug^{u-1}[g,y]=u\epsilon_2\, g^u(1-g^u)$. Hence, by applying \Cref{pthcoproduct} again, we obtain $(r_4-u^{p-1}\epsilon_2\, y)$ is primitive. 

Moreover, we get
\begin{align*}
\Delta(r_5)&\, =\Delta(xy-yx) \\
&\,=(x\otimes 1+g\otimes x)(y\otimes 1+g^u\otimes y)-(y\otimes 1+g^u\otimes y)(x\otimes 1+g\otimes x)\\
&\, =(xy-yx)\otimes 1+g^{u+1}\otimes (xy-yx)+(xg^u-g^ux)\otimes y+(gy-yg)\otimes x\\
&\, =r_5\otimes 1+g^{u+1}\otimes r_5-u\epsilon_1\, (g^u-g^{u+1})\otimes y+\epsilon_2(g-g^{u+1})\otimes x.
\end{align*}
Then it is easy to see that 
\[
\Delta(r_5+u\epsilon_1\, y-\epsilon_2\, x)=(r_5+u\epsilon_1\, y-\epsilon_2\, x)\otimes 1+g^{u+1}\otimes (r_5+u\epsilon_1\, y-\epsilon_2\, x).
\] 
Finally, we point out that if $u\neq 0$, then the primitive space $P(H)=0$. Otherwise, if $u=0$, then $P(H)=\k y$ and $\Delta(r_5)=r_5\otimes 1+g\otimes r_5$. Therefore, we obtain the following two cases depending on the value of $u$.

\begin{itemize}
\item \textbf{(A1a)} Suppose $u=0$. Then $H$ is the quotient of the free algebra $\k\langle g,x,y\rangle$ modulo the possible relations 
\begin{align*}
g^p=1, \ gx-xg&=\epsilon_1\, (g-g^2),\ gy=yg,\ x^p=\epsilon_1\, x+\lambda\, y,\\ 
y^p&=\mu\, y,\ xy-yx=\sigma\, x+\tau\, (1-g),
\end{align*}
for some $\epsilon_1 \in \{0,1\}$ and some $\lambda,\mu,\sigma,\tau\in \k$. By rescaling of $y$, we can let $\mu=\epsilon_2\in \{0,1\}$. Since $H$ is of dimension $p^3$, the Diamond Lemma \cite{Beg} implies that all the ambiguities of the relations are resolvable. This can be done by comparing coefficients of each of the following expressions: 
$$[x,x^p], \ [y,x^p], \ [g,x^p], \ [x,y^p], \ [y,y^p], \ [g,y^p], \ [x,g^p], \ [y,g^p],$$
where each expression is computed in two ways: (i) by replacing the above $x^p, y^p, g^p$ relations in the commutator, and (ii) by using Proposition~~\ref{palgebra}, e.g. $[y,x^p]=(y)(\ad \, x)^p$. Then we get the compatible conditions for the coefficients:
\[
\epsilon_1 \sigma=\lambda \sigma=\lambda \tau=(\epsilon_2 - \sigma^{p-1})\sigma=(\epsilon_2 - \sigma^{p-1})\tau=0.
\]
We may interpret these conditions further: 
%\begin{itemize}
% \item Suppose $\lambda \neq 0$. Then $\sigma = \tau = 0$. We get four infinite families of $H$ whose structures depending on $\lambda \in \k$ and choices of $\epsilon_1, \epsilon_2 \in \{0,1\}$. 
% \item Suppose $\lambda = 0$. If $\epsilon_2 = 0$, we get $\sigma^p = 0$ implying $\sigma = 0$, so we get two infinite families of $H$ whose structures depending on $\tau \in \k$ and $\epsilon_1 \in \{0,1\}$.
% \item Suppose $\lambda = 0$. If $\epsilon_2 = 1$, we get $\sigma^p = \sigma$, so $\sigma \in \mathbb{F}_p$. If $\sigma = 0$, then $\tau = 0$, we get two classes of $H$ whose structures depending on $\epsilon_1 \in \{0,1\}$. If $\sigma \neq 0$, $\tau$ can be anything, so we get $2(p-1)$ infinite families of $H$ whose structures depending on $\sigma \in \mathbb{F}_p^{\times}$, $\tau \in \k$ and $\epsilon_1 \in \{0,1\}$. \\
%\end{itemize}
\begin{itemize}
\item Suppose $\epsilon_1=\epsilon_2=0$. Then we have $\sigma=0$ and $\lambda\tau=0$. By rescaling $x,y$, we can take $\lambda,\tau\in\{0,1\}$ satisfying $\lambda\tau=0$. There are totally three classes.
\item Suppose $\epsilon_1=0,\epsilon_2=1$. Then we have $\lambda\sigma=\lambda\tau=0$ and $\sigma^p=\sigma,\tau=\sigma^{p-1}\tau$. By rescaling $x,y$, we can take $\sigma, \lambda,\tau\in\{0,1\}$ satisfying $\lambda\sigma=\tau(1-\sigma)=0$. There are totally four classes.
\item Suppose $\epsilon_1=1,\epsilon_2=0$. Then we have $\sigma=\lambda\tau=0$. By rescaling $x,y$, we can take $\lambda,\tau\in\{0,1\}$ satisfying $\lambda\tau=0$. There are totally three classes.
\item Suppose $\epsilon_1=\epsilon_2=1$. Then we have $\sigma=\tau=0$. We get one infinite family of $H$ whose structures depending on $\lambda \in \k$.\\
\end{itemize}

\item \textbf{(A1b)}  Suppose $u\neq 0$. Then $H$ is the quotient of the free algebra $\k\langle g,x,y\rangle$ modulo the possible relations 
\begin{align*}
g^p=1, \ gx-xg&=\epsilon_1\, (g-g^2),\ gy-yg=\epsilon_2\,(g-g^{u+1}),\ x^p=\epsilon_1\, x,\\  
y^p&=\epsilon_2\, y,\ xy-yx+u\epsilon_1\, y-\epsilon_2\, x=\tau\, (1-g^{u+1}),
\end{align*}
for some $\epsilon_i \in \{0,1\}$ and some $\tau\in \k$. 

When we apply the Diamond Lemma \cite{Beg}, all the ambiguities of the relations are resolvable and there is no ambiguity condition in this case. Here we will check for $[g,y^p]$, $[x,y^p]$ and $[y,x^p]$ and leave the rest to the readers. For $[g,y^p]$, take $\delta=\ad\ y$ and $\delta(g)=\epsilon_2(g-g^{u+1})$. One checks that either (i) $[g,y^p]=[g,\epsilon_2y]=\epsilon_2[g,y]$, or (ii) $[g,y^p]=(g)(\ad\ y)^p=\delta^p(g)=\delta (g)=[g,y]$ by Lemma \ref{pthpower} (3). Then both ways are the same since $\epsilon_2\in \{0,1\}$. For $[x,y^p]$ and $[y,x^p]$, we need the following result.\\

\noindent
\textbf{Claim:} $\sum_{i=0}^{p-2} (u\epsilon_1)^i\, (g^{u+1})(\ad\ x)^{p-1-i}=\sum_{i=0}^{p-2} \epsilon_2^i\, (g^{u+1})(\ad\ y)^{p-1-i}=0$.

\begin{proof}[Proof of claim]
Without loss of generality, we assume $\epsilon_1=\epsilon_2=1$. For the first summation, take $\delta=\ad\ x$ and $\delta(g)=g-g^2$. Thus, by Fermat's Little Theorem, for $u \neq 0$, $u^{p-1} = 1$ in mod $p$, and we have:
\begin{align*}
\sum_{i=0}^{p-2} (u\epsilon_1)^i\, (g^{u+1})(\ad\ x)^{p-1-i}&\, = \left(\sum_{i=0}^{p-2} \frac{u^i}{u^{p-1}}\, \delta^{p-1-i} \right) (g^{u+1}) \\
&\,= \left(\sum_{i=0}^{p-2} \delta^{p-1-i}/u^{p-1-i} \right) (g^{u+1})\\
&\,=(\delta/u+\delta^2/u^2+\cdots+\delta^{p-1}/u^{p-1})(g^{u+1})=0
\end{align*}
by \Cref{pthpower} (2). For the second summation, take $\delta=(\ad\ y)/u$ and $\delta(g^u)=g^{u-1}(g-g^{u+1})=g^u(1-g^u)$. For simplicity, we change the generator of the cyclic group $G=C_p$ from $g$ to $h=g^u$.  So $\delta(h)=h-h^2$. Thus 
\begin{align*}
\sum_{i=0}^{p-2} (g^{u+1})(\ad\ y)^{p-1-i}& \,= \left(\sum_{i=0}^{p-2} (\delta u)^{p-1-i} \right)(h^{1+u^{-1}}) \\
& \,=(\delta u+\delta^2u^2+\cdots +\delta^{p-1}u^{p-1})(h^{1+u^{-1}})=0.
\end{align*}
by \Cref{pthpower} (2) again.
\end{proof}

Moreover, by induction on $n$, one can prove that for all $n\ge 1$:
\begin{align*}
(y)(\ad\ x)^n &\, =(u\epsilon_1)^{n-1}[y,x]+\tau\left(\sum_{i=0}^{n-2} (u\epsilon_1)^i\, (g^{u+1})(\ad\ x)^{n-1-i}\right),\, \text{and}\\
(x) (\ad\ y)^n&\, =\epsilon_2^{n-1}[x,y]-\tau\left(\sum_{i=0}^{n-2} \epsilon_2^i\, (g^{u+1})(\ad\ y)^{n-1-i}\right).
\end{align*}
Hence, one gets either (i) $[x,y^p]=[x,\epsilon_2 y]=\epsilon_2[x,y]$, or (ii) $[x,y^p]=x (\ad\ y)^p=\epsilon_2^{p-1}[x,y]-\tau(\sum_{i=0}^{p-2} \epsilon_2^i\, (g^{u+1})(\ad\ y)^{p-1-i})=\epsilon_2^{p-1}[x,y]$ by the claim. So both ways are the same. The expression $[y,x^p]$ can be checked similarly. 

In conclusion, if one of the $\epsilon_i$'s is zero, we can always rescale $x$ or $y$ to make $\tau\in \{0,1\}$, where, depending on $u\in \{1,2,\dots,p-1\}$, there are totally $6(p-1)$ classes. When $\epsilon_1=\epsilon_2=1$, we get $(p-1)$ infinite families of $H$ whose structures depending on $\tau\in \k$ and $u\in \{1,2,\dots,p-1\}$. \\
\end{itemize}

% Case A2
\noindent 
\textbf{Case (A2).} It is clear that $H$ is the quotient of the free algebra $\k\langle g,x,y\rangle$, where $x,y \in H$ are liftings of $a,b \in \grH$, respectively, subject to the relations 
\[
g^p=1,\ gx-xg=r_1,\ gy-yg=r_2,\ x^p=r_3,\ y^p=r_4,\ xy-yx=r_5,
\] 
for some $r_1,r_2\in H_0$, $r_5\in H_1$ and $r_3,r_4\in H_{p-1}$. The coalgebra structure is determined by  
\[\Delta(g)=g\otimes g,\  \Delta(x)=x\otimes 1+1\otimes x,\ \Delta(y)=y\otimes 1+1\otimes y.\]
Here, the primitive space is given by $P(H)=\k x\oplus \k y$. Similar computations as before, we have:
\begin{align*}
\Delta(r_1) &= r_1 \otimes g + g \otimes r_1; \text{ and} \\
\Delta(r_2) &= r_2 \otimes g + g \otimes r_2.
\end{align*}
Since $r_1, r_2 \in H_0=\k C_p$, we have $r_1=\gamma_1(g-g)=0$ and $r_2=\gamma_2(g-g)=0$, for some $\gamma_1, \gamma_2 \in \k$. Moreover, 
\begin{align*}
\Delta(r_3) &= \Delta(x^p) = (x\otimes 1+1\otimes x)^p = r_3 \otimes 1 + 1 \otimes r_3 \\
\Delta(r_4) &= \Delta(y^p) = (y\otimes 1+1\otimes y)^p = r_4 \otimes 1 + 1 \otimes r_4 \\
\Delta(r_5) &= \Delta(xy-yx) = r_5 \otimes 1 + 1 \otimes r_5.
\end{align*}
Thus, $r_3, r_4$, and $r_5$ are primitive elements and the possible relations in $H$ are:
$$
g^p=1, \ gx=xg,\ gy=yg,\ x^p= \lambda x + \mu y ,\ y^p= \sigma x + \tau y, \ xy-yx=\alpha x + \beta y,
$$
for some $\lambda, \mu, \sigma, \tau, \alpha, \beta \in \k$. Again, by Diamond Lemma~\cite{Beg}, all the ambiguities of the relations are resolvable. In this case, the primitive space $P(H)=\k\, x\oplus\k\, y$ is indeed a two-dimensional restricted Lie algebra. We can apply the classifications in \cite[Theorem 7.4 (1)-(5)]{wang2012connected} and obtain the following five classes of $H$:

\begin{itemize}
 \item \textbf{(A2-a):} $\k \langle x, y, g \rangle / (g^p=1, \, x^p = 0, \, y^p = 0, \, [g,x]= [g,y] = 0, \, [x,y]=0)$
\vspace{0.4em}

 \item \textbf{(A2-b):} $\k \langle x, y, g \rangle / (g^p=1, \, x^p = x, \, y^p = 0, \, [g,x]= [g,y] = 0, \, [x,y]=0)$
\vspace{0.4em}

 \item \textbf{(A2-c):} $\k \langle x, y, g \rangle / (g^p=1, \, x^p = y, \, y^p = 0, \, [g,x]= [g,y] = 0, \, [x,y]=0)$
\vspace{0.4em}

 \item \textbf{(A2-d):} $\k \langle x, y, g \rangle / (g^p=1, \, x^p = x, \, y^p = y, \, [g,x]= [g,y] = 0, \, [x,y]=0)$
\vspace{0.4em}

 \item \textbf{(A2-e):} $\k \langle x, y, g \rangle / (g^p=1, \, x^p = x, \, y^p = 0, \, [g,x]= [g,y] = 0, \, [x,y]=y)$,
\end{itemize}
with coalgebra structures $\Delta(g) = g \otimes g, \, \Delta(x) = x \otimes 1 + 1 \otimes x, \, \Delta(y) = y \otimes 1 + 1 \otimes y$ occur in all five classes. \\

% Case A3
\noindent 
\textbf{Case (A3), $p=2$.} It is clear that $H$ is the quotient of the free algebra $\k\langle g,x,y\rangle$, where $x,y \in H$ are liftings of $a,b \in \grH$, respectively, subject to the relations 
\[
g^p=1,\ gx-xg=r_1,\ gy-yg=r_2,\ x^p=r_3,\ y^p=r_4,\ xy-yx=r_5,
\] 
for some $r_1,r_2\in H_0$, $r_5\in H_1$ and $r_3,r_4\in H_{p-1}$. The coalgebra structure is determined by  
\[\Delta(g)=g\otimes g,\  \Delta(x)=x\otimes 1+1\otimes x,\ \Delta(y)=y\otimes 1+1\otimes y.\]
The computation is very similar to case (A2) and yields the same ambiguity relations. By applying \cite[Theorem 7.4 (1)-(5)]{wang2012connected} again, we also get five classes of $H$:

\begin{itemize}
 \item \textbf{(A3-a):} $\k \langle x, y, g \rangle / (g^p=1, \, x^p = 0, \, y^p = 0, \, gx=yg, \, gy=xg, \, [x,y]=0)$
\vspace{0.4em}

 \item \textbf{(A3-b):} $\k \langle x, y, g \rangle / (g^p=1, \, x^p = x, \, y^p = 0, \, gx=yg, \, gy=xg, \, [x,y]=0)$
\vspace{0.4em}

 \item \textbf{(A3-c):} $\k \langle x, y, g \rangle / (g^p=1, \, x^p = y, \, y^p = 0, \, gx=yg, \, gy=xg, \, [x,y]=0)$ 
\vspace{0.4em}

 \item \textbf{(A3-d):} $\k \langle x, y, g \rangle / (g^p=1, \, x^p = x, \, y^p = y, \, gx=yg, \, gy=xg, \, [x,y]=0)$
\vspace{0.4em}

 \item \textbf{(A3-e):} $\k \langle x, y, g \rangle / (g^p=1, \, x^p = x, \, y^p = 0, \, gx=yg, \, gy=xg, \, [x,y]=y)$,
\end{itemize}
with coalgebra structures $\Delta(g) = g \otimes g, \, \Delta(x) = x \otimes 1 + 1 \otimes x, \, \Delta(y) = y \otimes 1 + 1 \otimes y$ occur in all five classes.\\

%%%%%%%%%%%%%%%%%%%%%%%%%%%%%%%%

\subsection{Liftings for Case (B), $p>2$}
\label{liftingB}

% Case B
It is clear that $H$ is the quotient of the free algebra $\k\langle g,x,y\rangle$, where $x,y \in H$ are liftings of $a,b \in \grH$, respectively, subject to the relations 
\[
g^p=1,\ gx-xg=r_1,\ gy-(x+y)g=r_2,\ x^p=r_3,\ y^p=r_4,\ xy-yx-\frac{1}{2}x^2=r_5,
\] 
for some $r_1,r_2\in H_0$, $r_5\in H_1$ and $r_3,r_4\in H_{p-1}$. The coalgebra structure is determined by  
\[\Delta(g)=g\otimes g,\  \Delta(x)=x\otimes 1+g\otimes x,\ \Delta(y)=y\otimes 1+g\otimes y.\]
Here, $P(H)=0$. One computes:
\begin{align*}
\Delta(r_1) &= r_1 \otimes g + g^2 \otimes r_1; \text{ and} \\
\Delta(r_2) &= r_2 \otimes g + g^2 \otimes r_2.
\end{align*}
Since $r_1, r_2 \in H_0=\k C_p$, by rescaling the variables, we may write $r_1=\epsilon_1(g-g^2)$ and $r_2=\mu(g-g^2)$, for $\epsilon_1\in \{0,1\}$ and $\mu \in \k$.

Next, by applying \Cref{pthcoproduct} and similar argument as in case (A1), we have 
$$\Delta(r_3-\epsilon_1\, x)=(x^p-\epsilon_1\, x)\otimes 1+1\otimes (x^p-\epsilon_1\, x).$$
Hence, $(r_3-\epsilon_1\, x)$ is primitive. This implies $r_3 = \epsilon_1\, x$ since $P(H)=0$.
\begin{align*}
\Delta(r_5) &= \Delta(xy-yx-\frac{1}{2}x^2) \\
&= r_5 \otimes 1 + g^2 \otimes r_5 + (xg-gx) \otimes y + (gy - yg - \frac{1}{2}xg - \frac{1}{2}gx) \otimes x \\
&= r_5 \otimes 1 + g^2 \otimes r_5 - \epsilon_1(g-g^2) \otimes y + \mu (g-g^2) \otimes x - \frac{1}{2} \epsilon_1(g-g^2) \otimes x.
\end{align*}
One can check that 
\begin{align*}
\Delta(r_5+\epsilon_1\,y - \mu x + \frac{1}{2}\epsilon_1x) 
&= (r_5+\epsilon_1\,y - \mu x + \frac{1}{2}\epsilon_1x) \otimes 1 + g^2 \otimes (r_5+\epsilon_1\,y - \mu x + \frac{1}{2}\epsilon_1x) 
\end{align*}
So $(r_5+\epsilon_1y - \mu x + \frac{1}{2}\epsilon_1x)$ is $(1,g^2)$-skew primitive. One can write $r_5+\epsilon_1\,y -\mu x + \frac{1}{2}\epsilon_1x = \tau(1-g^2)$, for some $\tau \in \k$. In other words, $xy-yx=\frac{1}{2}x^2 - \epsilon_1\,y + (\mu - \frac{1}{2}\epsilon_1)x+\tau(1-g^2)$.

Case (B) becomes very complicated when it comes to computing the relation $r_4=y^p$, due to how $(g)(\ad \, y)=[g,y]=gy-yg$ involves both $x$ and $g$ terms as we have seen in $r_2$ computation above. Therefore, we specify it to the smallest case when $p=3$ and leave the general $p$ case for future task: \\

\noindent
\textbf{Case (B) for $p=3$:} We have the following relations 
\[
g^3=1,\ gx-xg=r_1,\ gy-(x+y)g=r_2,\ x^3=r_3,\ y^3=r_4,\ xy-yx-\frac{1}{2}x^2=r_5,
\] 
with the coalgebra structure 
\[\Delta(g)=g\otimes g,\  \Delta(x)=x\otimes 1+g\otimes x,\ \Delta(y)=y\otimes 1+g\otimes y.\]
As before, we can write $r_1=\epsilon(g-g^2), \ r_2=\mu(g-g^2), \ r_3=\epsilon\, x$ and $r_5= (\mu-\frac{1}{2}\epsilon)x-\epsilon\, y+\tau(1-g^2)$, for some $\epsilon \in \{0,1\}$ and $\mu, \tau \in \k$. For $r_4$, using the relations 
$$gy =yg+xg+ \mu(g-g^2), \ \ gx = xg+ \epsilon(g-g^2), \ \ xy=yx+ \frac{1}{2}x^2 + (\mu-\frac{1}{2}\epsilon)x - \epsilon y + \tau(1-g^2),$$
we compute in mod $3$
\begin{align*}
\Delta(y^2)=y^2\otimes 1+(2yg+xy+\mu g-\mu g^2)\otimes y+g^2\otimes y^2,
\end{align*}
and 
\begin{align*}
\Delta(y^3) &= (y \otimes 1 + g \otimes y) \left(y^2 \otimes 1 + (2yg+xg+\mu(g-g^2)) \otimes y + g^2 \otimes y^2 \right) \\ 
&= y^3 \otimes 1 + (2y^2g + yxg + \mu y(g-g^2) + gy^2) \otimes y \\
& \qquad + (yg^2 + 2gyg + gxg + \mu(g^2-1)) \otimes y^2 + g^3 \otimes y^3 \\
&=  y^3 \otimes 1 + (\epsilon xg - \epsilon yg - \mu\epsilon g^2 + (\tau + \mu^2)g + (\mu\epsilon - \tau - \mu^2)) \otimes y \\
& \qquad + \epsilon(g^2-1) \otimes y^2 + 1 \otimes y^3.
\end{align*}
Therefore,
\begin{align*}
\Delta(y^3-\epsilon\, y^2+(\mu\epsilon-\tau-\mu^2)\, y)=&\, (y^3-\epsilon\, y^2 +(\mu\epsilon -\tau-\mu^2)\, y)\otimes 1\\
&\, +1\otimes (y^3-\epsilon\, y^2+(\mu\epsilon -\tau-\mu^2)\,y). 
\end{align*}
So we have $(y^3-\epsilon\, y^2+(\mu\epsilon-\tau-\mu^2)\, y)$ is primitive, which is $0$, so $y^3 = r_4=\epsilon\, y^2-(\mu\epsilon-\tau-\mu^2)\, y$. The relations in $H$ (when $p=3$) are
\begin{align*}
g^3&=1,\ gx-xg=\epsilon(g-g^2),\ gy-yg=xg+\mu(g-g^2),\ x^3=\epsilon\, x,\\ 
y^3&=\epsilon\, y^2-(\mu\epsilon-\tau-\mu^2)\, y,\ xy-yx= -x^2 +(\epsilon+\mu)\,x-\epsilon\, y+\tau(1-g^2)
\end{align*}
for some $\epsilon \in \{0,1\}$ and $\mu,\tau \in \k$. When we apply the Diamond Lemma \cite{Beg}, all the ambiguities of the relations are resolvable and there is no ambiguity condition in this case. For instance, we have
\begin{align*}
[g,x]&\, =\epsilon(g-g^2)\\
[g,x^2]&\,=-\epsilon xg+\epsilon xg^2+\epsilon(g-1)\\
[g^2,x]&\,=\epsilon(1-g^2)\\
[g,y]&\,=xg+\mu(g-g^2)\\
[g^2,y]&\,=-xg^2+(\epsilon-\mu)(g^2-1)\\
[g,y^2]&\,=-yxg+\epsilon xg-(\mu+\epsilon)yg+\mu yg^2+(\mu\epsilon-\mu^2-\tau)+(\tau+\mu^2)g-\mu\epsilon g^2\\
[x^2,y]&\,=-(\tau+\mu\epsilon)x-\mu x^2+\tau xg^2+\epsilon yx+\epsilon y+\epsilon\tau(1-g^2)\\
[x,y^2]&\, =yx^2+(\epsilon-\mu)yx+\epsilon y^2+(\epsilon-\tau-\epsilon\mu)y+(\tau+\epsilon+\mu^2)x-\epsilon x^2+\tau yg^2+\tau \epsilon-\tau\epsilon g^2.
\end{align*}
Hence,
\begin{align*}
[g,y^3]&\, =[g,\epsilon y^2-(\mu\epsilon-\tau-\mu^2) y]\\
&\, =-\epsilon yxg+(\tau+\mu^2-\mu\epsilon+\epsilon)xg-(\epsilon\mu+\epsilon)yg+\mu\epsilon yg^2+(\mu \epsilon-\epsilon \tau-\mu^2\epsilon)\\
&\quad +(\epsilon \tau+\tau\mu+\mu^3)g+(\mu^2\epsilon-\mu\epsilon-\tau\mu-\mu^3)g^2\\
&\, =[[[g,y],y],y],
\end{align*}
and
\begin{align*}
[x,y^3]&\, =[g,\epsilon y^2-(\mu\epsilon-\tau-\mu^2) y]\\
&\, =\epsilon yx^2+(\epsilon-\epsilon\mu)yx+\epsilon y^2+(\epsilon+\tau\epsilon-\mu^2\epsilon)y+(\epsilon\mu^2+\tau\mu+\epsilon+\mu^3-\tau\epsilon-\epsilon\mu)x\\
&\ +(\mu\epsilon-\tau-\mu^2-\epsilon)x^2+\tau\epsilon yg^2+(\tau\epsilon+\tau^2+\tau\mu^2-\tau\mu\epsilon)(1-g^2)\\
&\, =[[[x,y],y],y].
\end{align*}
We may interpret these conditions further: 
\begin{itemize}
\item When $\epsilon=\mu=0$, by rescaling of both $x$ and $y$, we can further let $\tau\in \{0,1\}$. There are two classes of $H$.
\item When $\epsilon=0$ and $\mu\neq 0$, by rescaling of $x$ and $y$ by the same factor, we can further let $\mu=1$. There is one infinite family of $H$ depending on $\tau \in \k$.
\item When $\epsilon=1$, there is one infinite family of $H$ depending on two parameters $\mu,\tau \in \k$. 
\end{itemize}
In positive characteristic $p>3$, we conjecture that the lifting of case (B) is 
\begin{align*}
g^p&=1,\ gx-xg=\epsilon(g-g^2),\ gy-yg=xg+\mu(g-g^2),\ x^p=\epsilon\, x,\\ 
y^p&=f_{p-1}y^{p-1}+\cdots +f_1y,\ xy-yx= \frac{1}{2}x^2 +(\mu-\frac{1}{2}\epsilon)\,x-\epsilon\, y+\tau(1-g^2),
\end{align*}
where $\epsilon \in \{0,1\}$ and $\mu, \tau \in \k$ and $f_i$'s are polynomials in terms of $\epsilon,\mu,\tau$. Moreover, the isomorphism classes are given similarly as in the case $p=3$ above. \\

%%%%%%%%%%%%%%%%%%%%%%%%%%%%%%%%

\subsection{Liftings for Case (C)}
\label{liftingC}

% Case C
It is clear that $H$ is the quotient of the free algebra $\k\langle g,x,y\rangle$, where $x,y \in H$ are liftings of $a,b \in \grH$, respectively, subject to the relations 
\[
g^p=1,\ gx-xg=r_1,\ gy-yg=r_2,\ x^p=r_3,\ y^p=r_4,\ xy-yx=r_5,
\] 
for some $r_1\in H_0$, $r_2,r_3\in H_{p-1}$, $r_5\in H_p$ and $r_4\in H_{p^2-1}$. The coalgebra structure is determined by  
\[\Delta(g)=g\otimes g,\  \Delta(x)=x\otimes 1+g^{\epsilon}\otimes x,\ \Delta(y)=y\otimes 1+1\otimes y+\sum_{1\le i\le p-1}\frac{(p-1)!}{i!(p-i)!}\, x^i g^{\epsilon(p-i)} \otimes x^{p-i},\]
where $g^\epsilon=1$ or $g$. In order to show that the lifting of the comultiplication of $y$ is unique, we need the following result. \\

\noindent
\textbf{Claim:} Let $A$ be the Hopf subalgebra of $H$ generated by $\{g,x\}$ with relations
$$A=\k\langle g,x\rangle/(g^p-1,\, x^p-\epsilon x, \, gx-xg - \epsilon(g-g^2)),$$ 
for $\epsilon \in \{0,1\}$. Then $\dim \HL^2(\Omega A)=1$ and it is spanned by the 2-cocycle
\[
\sum_{1\le i\le p-1}\frac{(p-1)!}{i!(p-i)!}\, x^i g^{\epsilon(p-i)} \otimes x^{p-i}.
\]
\begin{proof}[Proof of claim]
Suppose $\epsilon=1$. By the classification result of pointed Hopf algebras of dimension $p^2$ over $\k$ \cite{WangWang}, $A$ is the only class that is both non-commutative and non-cocommutative. Hence it must be self dual and $A^*\cong A$ as Hopf algebras. 

Suppose $\epsilon=0$. Then $A$ is local and $\dim J/J^2=2$, where $J$ is the augmentation ideal of $A$. So $A^*\cong u(\mathfrak g)$ for some two-dimensional restricted Lie algebra $\mathfrak g$. Note that $A$ is non-cocommutative. So $\mathfrak g$ is non-abelian. By \cite[Theorem 7.4]{wang2012connected}, $\mathfrak g=\k x+\k y$ satisfying $x^p=x,y^p=0$ and $[x,y]=y$. 

Then it is clear to see that, as algebras, $A^*\cong \k C_p\# K$, where $K=\k[x]/(x^p-x)$ with $\Delta(x)=x\otimes 1+1\otimes x$ and $C_p$ is the cyclic group of order $p$ generated by $g$. The $K$-action on $\k C_p$ is given by $x\cdot g=g^2-g$ if $\epsilon=1$, or $x\cdot g=g+1$ if $\epsilon=0$. Now we can apply the spectral sequence used in Proposition~\ref{Hsmash} to conclude that 
$$\dim \HL^2(\Omega A)=\dim \HH^2(A^*,\k)=\dim \HH^2(\k C_p,\k)^K\le \dim \HH^2(\k C_p)=1.$$
Next one checks that the element $\sum_{1\le i\le p-1}\frac{(p-1)!}{i!(p-i)!}\, x^i g^{\epsilon(p-i)} \otimes x^{p-i}$ is a 2-cocycle in $\Omega A$ and does not lie in the coboundary. Hence, $\dim\HL^2(\Omega A)\ge 1$. So it must equal one. This proves the claim. 
\end{proof}

We obtain two cases depending on the value of $\epsilon$:
\begin{itemize}
 \item \textbf{(Ca)} Suppose $\epsilon=0$. Then $P(H)=\k x$, $\Delta(y)=y\otimes 1+1\otimes y+ \Bock(x)$, for the notation $\Bock(x)$ as in Equation (\ref{E:Bock}), and 
 $$\Delta(r_1)=\Delta(gx-xg) = r_1 \otimes g + g \otimes r_1.$$
 Since $r_1\in H_0$, we can write $r_1 = \gamma_1(g-g)=0$, for some $\gamma_1 \in \k$, that is, $gx=xg$. And
 $$\Delta(r_3)= \Delta(x^p) =r_3 \otimes 1 + 1 \otimes r_3.$$
So $r_3$ is primitive. By rescaling both $x$ and $y$, we may write $r_3 = \epsilon_3\, x$, for some $\epsilon_3 \in \{0,1\}$. 
\begin{align*}
\Delta(r_2)=\Delta(gy-yg) &= gy \otimes g + g \otimes gy + \sum_{1\le i\le p-1}\frac{(p-1)!}{i!(p-i)!} gx^i \otimes gx^{p-i} \\
& \qquad - yg \otimes g - g \otimes yg - \sum_{1\le i\le p-1}\frac{(p-1)!}{i!(p-i)!} x^ig \otimes x^{p-i}g \\
&= r_2 \otimes g + g \otimes r_2,
\end{align*}
since $gx^i=x^ig$, for any $i \geq 0$. Thus, we can write $r_2 = \gamma_2(g-g)=0$, for some $\gamma_2 \in \k$, that is, $gy=yg$. Similarly, one checks
$$\Delta(r_5)=\Delta(xy-yx) = 1 \otimes (xy-yx) + (xy-yx) \otimes 1.$$
So $r_5$ is primitive, we can write $r_5 = \sigma x$, for some $\sigma \in \k$.

For $r_4$, with all the relations we have in $H$, one sees that 
\[
\Bock(x) \left(\ad\ (\lambda\, \Bock(x)+y\otimes 1+1\otimes y )\right)^{p-1}=\Bock(x) \left(\ad\ (y\otimes 1+1\otimes y) \right)^{p-1}
\]
in $H\otimes H$. Then by Proposition~\ref{palgebra}, 
\begin{align*}
\Delta(r_4)&\, = \Delta(y^p)=y^p\otimes 1+1\otimes y^p+\omega (x)^p+\Bock(x) \left(\ad\ (y\otimes 1+1\otimes y) \right)^{p-1}\\
&\, =y^p\otimes 1+1\otimes y^p+\omega (x^p)+\rho_y^{p-1}\left(\Bock(x)\right),
\end{align*}
where we use notations in \cite{NWW2} to write $\rho_y(\Bock(x))=[\Bock(x),y\otimes 1+1\otimes y]$ and $\rho_y(x)=[x,y]$. We get two smaller cases depending on the characteristic of $\k$: \\

\begin{itemize}
 \item \textbf{(Ca')} Suppose char.$\k=2$. Then 
 \begin{align*}
 \rho_y(\Bock(x))&\, =[\Bock(x),y\otimes 1+1\otimes y]=[x\otimes x,y\otimes 1+1\otimes y]\\
 &\,=[x,y]\otimes x+x\otimes [x,y]=2\sigma\, x\otimes x=0.
 \end{align*}

Thus, one has:
 \begin{align*}
 \Delta(r_4) = y^p \otimes 1 + 1 \otimes y^p + \Bock(x^p)+\rho_y^{p-1}\left(\Bock(x)\right)= r_4 \otimes 1 + 1 \otimes r_4 + \epsilon_3 \, \Bock(x).
 \end{align*}
It is easy to check that $(r_4 - \epsilon_3 \,y)$ is primitive, so $r_4 - \epsilon_3 \,y = \tau x$, for some $\tau \in \k$, since $P(H)=\k x$. We have $r_4 = y^p = \tau x + \epsilon_3 \,y$. \\ 
 
For case (Ca') when $p=2$, the possible relations in $H$ are:
$$g^p=1,\ gx=xg,\ gy=yg,x^p=\epsilon_3 \,x, \ y^p=\tau x + \epsilon_3 \,y, \ xy-yx=\sigma x,$$
for some $\sigma, \tau \in \k$ and $\epsilon_3 \in \{0,1\}$. By Diamond Lemma~\cite{Beg}, we get the following compatible conditions for the coefficients:
$$\epsilon_3 \sigma = (\epsilon_3 - \sigma^{p-1})\sigma = \tau \sigma = 0.$$
Here if $\epsilon_3\sigma=0$, then we will have $\sigma^p=0$ which implies that $\sigma=0$ and it is easy to see then $\sigma=0$ is the only restriction. \\

\item \textbf{(Ca'')} Suppose char.$\k \neq 2$. Thus, by applying formula in \cite[Proposition 2.7(iii)]{NWW2} where $i=p-2$, we have:
$$\rho_y^{p-1}(\Bock(x))= \partial^1(Z) = \Delta(Z)- Z \otimes 1 - 1 \otimes Z,$$
where 
$$Z:= - \sum_{i_1+\ldots+i_p=p-2} \frac{(p-2)!}{i_1! \cdots i_p!} \rho_y^{i_1}(x) \cdots \rho_y^{i_{p-1}}(x) \rho_y^{1+i_p}(x).$$ 
By previous assumption,  $\rho_y(x)=[x,y] = \sigma x$, for some $\sigma\in \k$. So for any $s \geq 0,\ \rho_y^s(x)= \sigma^s x$. When $p>2$, $Z$ becomes:
\begin{align*}
Z&= - \sum_{i_1+\ldots+i_p=p-2} \frac{(p-2)!}{i_1! \cdots i_p!} \sigma^{i_1+\ldots+i_p+1} x^p \\
&= - \sum_{i_1+\ldots+i_p=p-2} \frac{(p-2)!}{i_1! \cdots i_p!} \sigma^{p-1} x^p \\
&= - \sum_{i_1+\ldots+i_p=p-2} \frac{(p-2)!}{i_1! \cdots i_p!} \sigma^{p-1} \epsilon_3 \,x \\ 
&= - \sigma^{p-1} \epsilon_3 \,x \left(\sum_{i_1+\ldots+i_p=p-2} \frac{(p-2)!}{i_1! \cdots i_p!} \right) \\
&= - \sigma^{p-1} \epsilon_3 \,x \, (p-2)^{p-2} = - \sigma^{p-1} \epsilon_3\, x \,(-2^{p-2}) \\
&= \sigma^{p-1} \epsilon_3 \,x \, \frac{1}{2} \, 2^{p-1} = \frac{1}{2} \sigma^{p-1} \epsilon_3\, x, 
\end{align*}
where $2^{p-1}=1$ by Fermat's Little Theorem. It follows that $\rho_y^{p-1}(\Bock(x))= \partial^1(Z) = \Delta(Z)- Z \otimes 1 - 1 \otimes Z$. So
$$ \Delta(r_4)=y^p \otimes 1 + 1 \otimes y^p + \epsilon_3 \, \Bock(x) + \Delta(Z)- Z \otimes 1 - 1 \otimes Z.$$
 Therefore, 
 \begin{align*}
 &\Delta(y^p - Z) = \Delta(r_4-\frac{1}{2} \sigma^{p-1} \epsilon_3\,x) \\
 &= (r_4-\frac{1}{2} \sigma^{p-1} \epsilon_3\,x) \otimes 1 + 1 \otimes (r_4-\frac{1}{2} \sigma^{p-1} \epsilon_3\,x) + \epsilon_3 \, \Bock(x).
 \end{align*} 
 It is easy to check that $(r_4-\frac{1}{2} \sigma^{p-1} \epsilon_3\,x - \epsilon_3 \, y)$ is primitive, so $r_4-\frac{1}{2} \sigma^{p-1} \epsilon_3\,x - \epsilon_3 \, y = \tau x$, for some $\tau \in \k$, or $y^p = (\tau + \frac{1}{2} \sigma^{p-1} \epsilon_3) x + \epsilon_3 \, y$. \\
 
For case (Ca'') where $p > 2$, the possible relations in $H$ are:
\begin{align*}
g^p&=1,\  gx=xg,\ gy=yg,\\ 
x^p=\epsilon_3 \,x,\ y^p&=(\tau + \frac{1}{2} \sigma^{p-1} \epsilon_3) x + \epsilon_3 \, y,\ xy-yx=\sigma x,
\end{align*}
for some $\sigma, \tau \in \k$ and $\epsilon_3 \in \{0,1\}$. By Diamond Lemma~\cite{Beg}, we get the following compatible conditions for the coefficients:
$$\epsilon_3 \sigma = (\epsilon_3 - \sigma^{p-1})\sigma = (\tau + \frac{1}{2} \sigma^{p-1} \epsilon_3) \sigma = 0.$$
Here if $\epsilon_3\sigma=0$, then we will have $\sigma^p=0$ which implies that $\sigma=0$ and it is easy to see then $\sigma=0$ is the only restriction. \\
\end{itemize}

Finally, the two cases (Ca') and (Ca") can be combined together. For case (Ca), the possible relations in $H$ are:
$$
g^p=1,\ gx=xg,\ gy=yg,x^p=\epsilon_3 \,x, \ y^p=\tau x + \epsilon_3 \,y, \ xy-yx=0,
$$
for some $\tau \in \k$ and $\epsilon_3 \in \{0,1\}$.
\begin{itemize}
\item Suppose $\epsilon_3=0$. By rescaling $x,y$, we can further choose $\tau\in \{0,1\}$. We get two finite classes of $H$. 
\item Suppose $\epsilon_3=1$. We get one infinite family of $H$ whose structures depending on $\tau \in \k$.\\
\end{itemize}

 \item \textbf{(Cb)} Suppose $\epsilon=1$. Then $P(H)=0$ and
 $$\Delta(r_1)=r_1 \otimes g + g^2 \otimes r_1.$$
 Since $r_1\in H_0$, we can write $r_1 = \epsilon_1(g-g^2)$, for $\epsilon_1 \in \{0,1\}$. 
 Again, by applying \Cref{pthcoproduct} and similar argument as in case (A1), we have $(r_3-\epsilon_1\, x)$ is primitive. This implies $r_3 = \epsilon_1\, x$ since $P(H)=0$.
\end{itemize}

The computations for $r_2, r_4, r_5$ in case (Cb) are very complicated due to the coproduct formula in $\Delta(y)$ and calculation in mod $p$. Here, we specify to case $(p=2)$ and case $(p > 2$ with additional assumption $gx=xg$). \\

\noindent
\textbf{Case (Cb) for $p=2$:} As before we have relations:
\[
g^2=1,\  gx-xg=\epsilon_1(g-g^2)=\epsilon_1(g-1),\ x^2=\epsilon_1x,
\]
for $\epsilon_1\in\{0,1\}$; and working in mod $2$ 
\begin{align*}
\Delta(gy-yg)=& (gy-yg)\otimes g+g\otimes (gy-yg)+\epsilon_1(x\otimes(g+1)+(g+1)\otimes xg) \\
& \,+\epsilon_1(1+g)\otimes (1+g).
\end{align*}
One can check that $gy-yg - \epsilon_1(1+g+x+xg)$ is $(g,g)$-primitive. This implies that $gy-yg=\epsilon_1(1+g+x+xg)$. Also,
\begin{align*}
\Delta(xy-yx)=(xy-yx)\otimes 1+g\otimes (xy-yx).
\end{align*}
So $xy-yx=\sigma x+\tau(1-g)$, for some $\sigma, \tau \in \k$, and 
\begin{align*}
\Delta(y^2)=y^2\otimes 1+1\otimes y^2+\tau\left((1-g)\otimes x+xg\otimes (1-g)\right)+\epsilon_1xg\otimes x.
\end{align*}
So $y^2=\epsilon_1y+\tau(x+xg)$. By Diamond Lemma~\cite{Beg}, we get all compatible conditions as: $\sigma \tau=\sigma^2-\epsilon_1\sigma=\epsilon_1\sigma=0$. So we have $\sigma=0$ and the relations are 
\begin{gather*}
g^2=1,\  gx-xg=\epsilon_1(g-1),\ gy-yg=\epsilon_1(1+g+x+xg),\\
xy-yx=\tau(1-g),\ x^2=\epsilon_1x,\ y^2=\epsilon_1y+\tau (x+xg).
\end{gather*}

If $\epsilon_1=0$, by rescaling $x,y$, we can choose $\tau\in \{0,1\}$, where we get two classes of $H$. If $\epsilon_1=1$, we get one infinite family of $H$ whose structures depending on $\tau\in \k$. \\

\noindent
\textbf{Case (Cb) for $p>2$ with additional assumption:} Suppose $[g,x]=0$. We can get that $x^p=0$ and  
$$\Delta(gy-yg)=(gy-yg)\otimes g+g\otimes (gy-yg).$$
This implies that $gy=yg$. And
$$\Delta(xy-yx)=(xy-yx)\otimes 1+g\otimes (xy-yx).$$
So $(xy-yx)$ is $(1,g)$-primitive. By rescaling of $x$, which is also $(1,g)$-primitive, we can write $xy-yx=\epsilon_2\,x+\tau(1-g)$, for some $\tau\in \k$ and $\epsilon_2 \in \{0,1\}$. 

Let $A$ be the commutative Hopf subalgebra of $H$ generated by $\{g,x\}$. We follow the notations in \cite{NWW2} to denote, depending on the context, $\rho_y=(\ad\, y)$ as the right adjoint action of $y$ on $A$, or $\rho_y=(\ad\, (y\otimes 1+1\otimes y))$ as the right adjoint action of $(y\otimes 1+1\otimes y)$ on $A\otimes A$. Then by \Cref{palgebra}, one sees that 
\begin{align*}
\Delta(y^p)=&\, \left(y\otimes 1+1\otimes y+\sum_{1\le i\le p-1}\frac{(p-1)!}{i!(p-i)!}\, x^i g^{p-i} \otimes x^{p-i}\right)^p\\
=&\ y^p\otimes 1+1\otimes y^p+\rho_y^{p-1}\left(\sum_{1\le i\le p-1}\frac{(p-1)!}{i!(p-i)!}\, x^i g^{p-i} \otimes x^{p-i}\right).
\end{align*}

We apply the fact that $\rho_y(x)=[x,y]=\epsilon_2\,x+\tau(1-g)$.
\begin{align*}
&\,\rho_y\left(\sum_{1\le i\le p-1}\frac{(p-1)!}{i!(p-i)!} x^i g^{p-i} \otimes x^{p-i}\right)\\
=&\,\sum_{1\le i\le p-1}\frac{(p-1)!}{i!(p-i)!} \rho_y(x^i) g^{p-i} \otimes x^{p-i}+\sum_{1\le i\le p-1}\frac{(p-1)!}{i!(p-i)!} x^i g^{p-i} \otimes \rho_y(x^{p-i})\\
=&\, \epsilon_2i\, \sum_{1\le i\le p-1}\frac{(p-1)!}{i!(p-i)!} x^ig^{p-i} \otimes x^{p-i}+\tau i\, \sum_{1\le i\le p-1}\frac{(p-1)!}{i!(p-i)!} x^{i-1}(1-g)g^{p-i} \otimes x^{p-i}\\
+ \epsilon_2&(p-i) \sum_{1\le i\le p-1}\frac{(p-1)!}{i!(p-i)!} x^ig^{p-i} \otimes x^{p-i} +\tau (p-i) \sum_{1\le i\le p-1}\frac{(p-1)!}{i!(p-i)!} x^ig^{p-i} \otimes x^{p-i-1}(1-g)\\
=&\, \tau\left(\sum_{0\le j\le p-2}{p-1 \choose j} g^{p-1-j}x^j\otimes x^{p-1-j}-\sum_{0\le j\le p-2} {p-1\choose j} g^{p-j} x^j\otimes x^{p-1-j}\right.\\
&\quad +\left. \sum_{1\le j\le p-1} {p-1\choose j} g^{p-j}x^j\otimes x^{p-1-j}-\sum_{1\le j\le p-1}{p-1 \choose j} g^{p-j}x^j\otimes gx^{p-1-j}\right)\\
=&\, \tau \left([\Delta(x^{p-1})-x^{p-1}\otimes 1] - g^p\otimes x^{p-1} + gx^{p-1}\otimes 1+ [-\Delta(gx^{p-1})+g^p\otimes gx^{p-1}] \right)\\
=&\, \tau \left(\Delta(x^{p-1}(1-g))-(x^{p-1}(1-g))\otimes 1-1\otimes (x^{p-1}(1-g)) \right)\\
=&\, \tau \partial^1(x^{p-1}(1-g)),
\end{align*}
where $\partial^1: A\to A\otimes A$ such that $\partial^1(a)=\Delta(a)-a\otimes 1-1\otimes a$, for any $a \in A$. Next, we need the following result. \\

\noindent
\textbf{Claim:} $\rho_y$ and $\partial^1$ commute with each other. 
\begin{proof}[Proof of claim]
For any $a\in A$, we get
\begin{align*}
\rho_y\partial^1(a)&\, =\rho_y(\Delta(a)-a\otimes 1-1\otimes a)\\
&\, =[\Delta(a)-a\otimes 1-1\otimes a,y\otimes 1+1\otimes y]\\
&\, =[\Delta(a),y\otimes 1+1\otimes y]-[a,y]\otimes 1-1\otimes [a,y]\\
&\, =[\Delta(a),y\otimes 1+1\otimes y+\sum_{1\le i\le p-1} \frac{(p-1)!}{i!(p-i)!}\, x^i g^{p-i} \otimes x^{p-i}]-[a,y]\otimes 1-1\otimes [a,y]\\
&\, =[\Delta(a),\Delta(y)]-[a,y]\otimes 1-1\otimes [a,y]\\
&\, =\Delta([a,y])-[a,y]\otimes 1-1\otimes [a,y]\\
&\, =\partial^1([a,y]) =\partial^1(\rho_y(a)).
\end{align*}
\end{proof}
Therefore, one sees that 
\begin{align*}
\rho_y^{p-1}\left(\sum_{1\le i\le p-1}\frac{(p-1)!}{i!(p-i)!} x^i g^{p-i} \otimes x^{p-i}\right)&=\rho_y^{p-2}\left(\tau \,\partial^1(x^{p-1}(1-g))\right) \\
&= \tau \, \partial^1((1-g)\rho_y^{p-2}(x^{p-1})),
\end{align*} so
\[
\Delta(y^p- \tau \, \rho_y^{p-2}(x^{p-1})(1-g))=(y^p- \tau \, \rho_y^{p-2}(x^{p-1})(1-g))\otimes 1+1\otimes (y^p- \tau \, \rho_y^{p-2}(x^{p-1})(1-g)).
\]
Since $P(H)=0$, we get $y^p=\tau \, (1-g)\rho_y^{p-2}(x^{p-1})$. Since the ambiguity $[x,y^p]=0$ is resolvable, we conclude that $0=[x, \tau \, \rho_y^{p-2}(x^{p-1})(1-g)]=x(\ad\ y)^p=\epsilon_2^p x+\epsilon_2^{p-1}\tau(1-g)$. Hence, $\epsilon_2=0$ and $\rho_y(x)=\tau(1-g)$. Moreover, we get 
\[
y^p=\tau \,(1-g)\rho_y^{p-2}(x^{p-1})=(p-1)!(1-g)\tau^{p-1}(1-g)^{p-2}x=-\tau^{p-1}(1-g)^{p-1}x.
\]
Thus the relations are 
\[
g^p=1,\  gx-xg=0,\ gy-yg=0,\ xy-yx=\tau(1-g),\ x^p=0,\ y^p=-\tau^{p-1}(1-g)^{p-1}x.
\]
When we apply the Diamond Lemma \cite{Beg}, all the ambiguities of the relations are resolvable and there is no ambiguity condition in this case. By rescaling of $x$ and $y$, we can take $\tau \in \{0,1\}$. There are two classes of $H$.
\\

%%%%%%%%%%%%%%%%%%%%%%%%%%%%%%%%

\subsection{Liftings for Case (D)} \
\label{liftingD}

% Case D1
\noindent 
\textbf{Case (D1).} It is clear that $H$ is the quotient of the free algebra $\k\langle g,x\rangle$, where $x \in H$ is lifting of $a \in \grH$, subject to the relations 
\[
g^{p^2}=1,\ gx-xg=r_1,\  x^p=r_2,
\] 
for some $r_1\in H_0$ and $r_2\in H_{p-1}$. The coalgebra structure is determined by  
\[\Delta(g)=g\otimes g,\  \Delta(x)=x\otimes 1+g^{\epsilon}\otimes x,\]
where $g^\epsilon=1,g$, or $g^p$. We compute 
$$\Delta(r_1) = r_1 \otimes g + g^{\epsilon + 1} \otimes r_1.$$
Since $r_1\in H_0$, we can write $r_1 = \epsilon_1(g-g^{\epsilon+1})$, for $\epsilon_1 \in \{0,1\}$. 
$$\Delta(r_2) = (x \otimes 1 + g^\epsilon \otimes x)^p.$$

We have the following three cases depending on the values of $\epsilon$:
\begin{itemize}
 \item \textbf{(D1a)} Suppose $\epsilon = 0$. Then $P(H)=\k x$, $r_1 = 0$, and $r_2 = \lambda x$, for some $\lambda \in \k$. Thus, after checking all ambiguities and by Diamond Lemma, the relations in $H$ are: 
 $$gx=xg,\  x^p=\lambda x.$$
No ambiguity conditions occur for case (D1a). By rescaling of $x$, we can take $\lambda =\epsilon_2 \in \{0,1\}$. We obtain two classes of $H$ whose structures depending on $\epsilon_2$. \\
 
 \item \textbf{(D1b)} Suppose $\epsilon = 1$. Then by applying \Cref{pthcoproduct}, we have $\Delta(x^p-\epsilon_1x)=(x^p-\epsilon_1x)\otimes 1+g^p\otimes (x^p-\epsilon_1x)$. Hence, $x^p-\epsilon_1x = \lambda(1-g^p)$, for some $\lambda \in \k$, and $r_2=x^p=\epsilon_1 x+\lambda(1-g^p)$. The possible relations in $H$ are:
 $$gx-xg=\epsilon_1(g-g^2),\  x^p=\epsilon_1 x+\lambda(1-g^p),$$
for $\epsilon_1 \in \{0,1\}$ and some $\lambda \in \k$. By Diamond Lemma~\cite{Beg}, all the ambiguities of the relations are resolvable. There is no ambiguity condition in this case. If $\epsilon_1=0$, by rescaling $x$, we can choose $\lambda\in\{0,1\}$. There are two classes. Otherwise, when $\epsilon_1=1$, we obtain one infinite family of $H$ whose structures depending on $\lambda \in \k$. \\

 \item \textbf{(D1c)} Suppose $\epsilon = p$. Observe that since $[-,x]$ is a derivation on $\k\langle g\rangle$, $[g^p,x]=pg^{p-1}[g,x]=0$ in characteristic $p$, so $g^px = xg^p$. We have
 $$\Delta(r_2) = (x \otimes 1 + g^p \otimes x)^p = x^p \otimes 1 + (g^p)^p \otimes x^p = x^p \otimes 1 + 1 \otimes x^p,$$
since $g^{p^2} = 1$. Thus, $r_2 \in P(H)=0$ implying $x^p = 0$. The relations in $H$ are: 
$$gx-xg=\epsilon_1(g-g^{p+1}),\  x^p=0,$$
 for $\epsilon_1 \in \{0,1\}$.  
%
%\noindent
%\textbf{Claim:} $(\ad \, x)^n(g)= \epsilon_1^{n-1}(g^p-1)^{n-1}[x,g]$, for all $n \geq 1$. 
% 
% \begin{proof}[Proof of claim]
% This can be done inductively. The case $n=1$ holds trivially. For $n=2$, 
% \begin{align*}
%[x,[x,g]] &= [x, \epsilon_1(g^{p+1}-g)] =  \epsilon_1 \left( [x,g^{p+1}] - [x,g] \right) \\
% &= \epsilon_1 \left( g^p[x,g] - [x,g] \right), \text{ since } [x,g^p]=0 \\
% &= \epsilon_1 (g^p-1)[x,g].
% \end{align*}
% By induction, assume the statement holds for all $1 \leq k \leq n-1$. Then
%  \begin{align*}
%(\ad \, x)^{k+1}(g)&= (\ad \, x)^k([x,g]) = [x, \epsilon_1^{k-1}(g^p-1)^{k-1}[x,g]] \\
% &= \epsilon_1^{k-1}(g^p-1)^{k-1} [x,[x,g]] \\
% &= \epsilon_1^{k-1}(g^p-1)^{k-1} \epsilon_1 (g^p-1)[x,g] \\
% &= \epsilon_1^{k}(g^p-1)^{k}[x,g]. 
% \end{align*}
% This proves the claim. 
% \end{proof}
%When $n=p$, then $[x^p,g]= (\ad \, x)^p(g) = \epsilon_1^{p-1}(g^p-1)^{p-1}[x,g]=\epsilon_1^p\,g\,(g^p-1)^p$ which is always equal to $0$ in characteristic $p$. 
By Diamond Lemma~\cite{Beg}, all the ambiguities of the relations are resolvable. There is no ambiguity condition in this case. We make a remark that for $[g,x^p]$ we need to apply the identity  $(g)(\ad \, x)^n= \epsilon_1^{n-1}(1-g^p)^{n-1}[g,x]$, for all $n \geq 1$, which can be proved inductively. Thus we obtain two classes of $H$ whose structures depending on $\epsilon_1 \in \{0,1\}$. \\
\end{itemize}

% Case D2
\noindent 
\textbf{Case (D2).} It is clear that $H$ is the quotient of the free algebra $\k\langle g_1,g_2,x\rangle$, where $x \in H$ is lifting of $a \in \grH$, subject to the relations 
\[
g_1^{p}=1,\ g_2^p=1, g_1x-xg_1=r_1,\  g_2x-xg_2=r_2,\ x^p=r_3,
\] 
for some $r_1,r_2\in H_0$ and $r_3\in H_{p-1}$. The coalgebra structure is determined by  
\[\Delta(g_i)=g_i\otimes g_i,\  \text{for } i=1,2; \ \Delta(x)=x\otimes 1+g_1^{\epsilon}\otimes x,\]
where $g_1^\epsilon=1$ or $g_1$. As before,
\begin{align*}
\Delta(r_1) &= r_1 \otimes g_1 + g_1^{\epsilon+1} \otimes r_1; \text{ and} \\
\Delta(r_2) &= r_2 \otimes g_2 + g_1^\epsilon g_2 \otimes r_2.
\end{align*}
Since $r_1, r_2 \in H_0$, and since we can rescale $x$ only once, we may write $r_1 = \epsilon_1(g_1-g_1^{\epsilon+1})$ and $r_2 = \tau(g_2-g_1^\epsilon g_2)$, for $\epsilon_1 \in \{0,1\}$ and some $\tau \in \k$. 
We have the following two cases depending on the value of $\epsilon$: 
\begin{itemize}
 \item \textbf{(D2a)} Suppose $\epsilon = 0$. Then $P(H)=\k x$, $r_1=0$, $r_2=0$, and $r_3 = \lambda x$, for some $\lambda \in \k$. By Diamond Lemma~\cite{Beg}, all the ambiguities of the relations are resolvable. The relations in $H$ are: 
 $$g_1x=xg_1,\  g_2x=xg_2, \ x^p=\lambda x,$$
for some $\lambda \in \k.$ By rescaling of $x$, $\lambda$ can be chosen as $\epsilon_2 \in \{0,1\}$. No ambiguity conditions occur for case (D2a). We obtain two classes of $H$ whose structures depending on $\epsilon_2$. \\
 
 \item \textbf{(D2b)} Suppose $\epsilon = 1$. Then by applying \Cref{pthcoproduct}, we have $(r_3-\epsilon_1 x)$ is primitive. This implies $r_3 = \epsilon_1 x$ since $P(H)=0$. The relations in $H$ are:
 $$g_1x-xg_1=\epsilon_1g_1(1-g_1),\ g_2x-xg_2= \tau g_2(1-g_1), \ x^p= \epsilon_1 x,$$
for $\epsilon_1\in \{0,1\}$ and some $\tau \in \k$. By Diamond Lemma~\cite{Beg}, all the ambiguities of the relations are resolvable. The ambiguity $[g_2,x^p]=(g_2)(\ad\, x)^p=(g_2)(\ad \, x)$ can be checked by using \Cref{pthpower} (3), where we take $\delta=(\ad \,x)$. Then one sees that $\delta(g_1)=\epsilon_1(g_1-g_1^2)$ and $\delta(g_2)=\tau g_2(1-g_1)$. If $\epsilon_1=0$, then one may rescale $x$ and assume $\tau = \epsilon_2 \in \{0,1\}$; and we get two classes of $H$ depending on $\epsilon_2$. If $\epsilon_1=1$, we get one infinite family of $H$ whose structures depending on $\tau \in \k$ and $\epsilon_1=1$. \\
\end{itemize}

\noindent
\textbf{Acknowledgements:} The main work of this paper results from the second author's visit to Northeastern University in March 2016. The second author is thankful for their hospitality.

%%%%%%%%%%%%%%%%%%%%%%%%%%%%%%%%


\begin{thebibliography}{99}


\bibitem{AG99} N.~Andruskiewitsch and M.~Gra\~{n}a, Braided Hopf algebras over nonabelian finite groups, \textit{Bol. Acad. Nac. Cienc.} (C\'{o}rdoba), 63 (1999), 45--78.

\bibitem{NA98} N.~Andruskiewitsch and H.-J.~Schneider, Lifting of quantum linear spaces and pointed Hopf algebras of order $p^3$, \textit{J.~Algebra}, 209 (1998), 658--691.

\bibitem{NA02} \bysame, Pointed Hopf algebras, \textit{New Directions in Hopf Algebras}, \textit{MSRI Publications}, 43 (2002).

\bibitem{NA10} \bysame, On the classification of finite-dimensional pointed Hopf algebras, \textit{Ann.~Math.}, 171 (2010), 375--417.

\bibitem{AI} I.~Angiono and A.~G.~Iglesias, Pointed Hopf algebras with standard braiding are generated in degree one, \textit{Contemp. Math.}, 537 (2011), 57--70.

\bibitem{AI1} \bysame, Liftings of Nichols algebras of diagonal type II. All liftings are cocycle deformations, preprint, arXiv:1605.03113. 

\bibitem{Beg} G.~Bergman, The diamond lemma for ring theory, \textit{Adv.~Math.}, 29 (1978), 178--218.

\bibitem{CaenepeelDascalescu} S.~Caenepeel and S.~D{\u{a}}sc{\u{a}}lescu, Pointed Hopf algebras of dimension $p^3$,\textit{ J.~Algebra},  209 (1998), 622--634.

\bibitem{CLW} C.~Cibils, A.~Lauve and S.~Witherspoon, Hopf quivers and Nichols algebras in positive characteristic, \textit{Proc.~Amer.~Math.~Soc.}, 137 (2009), 4029--4041.

\bibitem{Jac} N.~Jacobson, \textit{Lie Algebras}, Dover Publications Inc., New York, 1979.

\bibitem{WHeck} J.~Wang and I~Heckenberger, Rank 2 {N}ichols algebras of diagonal type over fields of positive characteristic, \textit{SIGMA~Symmetry~Integrability~Geom.~Methods~Appl.} (2015), Paper 011, 24 pp.

\bibitem{KMM} Y.~Kashina, G.~Mason and S.~Montgomery, Computing the Frobenius-Schur indicator for abelian extensions of Hopf algebras, \textit{J.~Algebra}, 251 (2002), 888--913.

\bibitem{Mj} S.~Majid, Crossed products by braided groups and bosonization, \textit{J.~Algebra}, 163 (1994), 165--190.

\bibitem{MO93} S.~Montgomery, \textit{Hopf Algebras and Their Actions on Rings}, CBMS Regional Conference Series in Mathematics, 82, Amer.~Math.~Soc., Providence, RI, 1993.

\bibitem{NWW1} V.~C.~Nguyen, L.~Wang and X.~Wang, Classification of connected Hopf algebras of dimension $p^3$ I, \textit{J.~Algebra}, 424 (2015), 473--505.

\bibitem{NWW2} V.~C.~Nguyen, L.~Wang and X.~Wang, Primitive deformations of quantum $p$-groups, preprint, arXiv:1505.02454.

\bibitem{NZ} W.~D.~Nichols and M.~B.~Zoeller, A Hopf algebra freeness theorem, \textit{Amer. J. of Math.}, 111 no. 2 (1989), 381--385.

\bibitem{R85} D.~E.~Radford, The structure of Hopf algebras with a projection, \textit{J. Algebra}, 92 (1985), 322--347.

\bibitem{R} \bysame, \textit{Hopf algebras}, Series on Knots and Everything, 49, World Scientific Publishing Co.~Pte.~Ltd., Singapore, 2012. 

\bibitem{S} S.~Scherotzke, Classification of pointed rank one Hopf algebras, \textit{J.~Algebra}, 319 (2008), 2889--2912. 

\bibitem{Sten} D.~\c{S}tefan, Hochschild cohomology on Hopf Galois extentions, \textit{J.~Pure~Appl.~Algebra}, 103 (1995), 221--233.

\bibitem{SteVan} D.~\c{S}tefan and F.~van~Oystaeyen, Hochschild cohomology and the coradical filtration of pointed coalgebras: applications, \textit{J.~Algebra}, 210 (1998), 535--556.

\bibitem{DZZ} D.-G.~Wang, J.~J.~Zhang and G.~Zhuang, Primitive cohomology of Hopf algebras, \textit{J. Algebra}, 464 (2016), 36--96.

\bibitem{WangWang} L.~Wang and X.~Wang, Classification of pointed Hopf algebras of dimension $p^2$ over any algebraically closed field, \textit{Algebr.~Represent.~Theory}, 17 (2014), 1267--1276.

\bibitem{wang2012connected} X.~Wang, Connected Hopf algebras of dimension $p^2$, \textit{J.~Algebra}, 391 (2013), 93--113.

\end{thebibliography}
\end{document}